\tikzset{
  knot diagram/every strand/.append style={
    ultra thick,
    red
  },
  show curve controls/.style={
    postaction=decorate,
    decoration={show path construction,
      curveto code={
        \draw [blue, dashed]
        (\tikzinputsegmentfirst) -- (\tikzinputsegmentsupporta)
        node [at end, draw, solid, red, inner sep=2pt]{};
        \draw [blue, dashed]
        (\tikzinputsegmentsupportb) -- (\tikzinputsegmentlast)
        node [at start, draw, solid, red, inner sep=2pt]{}
        node [at end, fill, blue, ellipse, inner sep=2pt]{}
        ;
      }
    }
  },
  show curve endpoints/.style={
    postaction=decorate,
    decoration={show path construction,
      curveto code={
        \node [fill, blue, ellipse, inner sep=2pt] at (\tikzinputsegmentlast) {}
        ;
      }
    }
  }
}
\patchcmd\Gread@eps{\@inputcheck#1 }{\@inputcheck"#1"\relax}{}{}
\theoremstyle{plain}\newtheorem{theorem}{Theorem}[section]\newtheorem{Theorem}{Theorem}\newtheorem{Corollary}{Corollary}\newtheorem{proposition}[theorem]{Proposition}\newtheorem{lemma}[theorem]{Lemma}\newtheorem{corollary}[theorem]{Corollary}
\def\eq{\coloneqq}
\theoremstyle{definition}\newtheorem{definition}[theorem]{Definition}\newtheorem{example}[theorem]{Example}\newtheorem{notation}[theorem]{Notation}\newtheorem{remark}[theorem]{Remark}%\newtheorem{construction}[subsection]{Construction}
\def\C{\mathbb{C}}\def\N{\mathbb{N}}\def\Z{\mathbb{Z}}\def\R{\mathbb{R}}\def\T{\mathbb{T}}
\def\ZZ2{\mathbb{\Z/ 2\Z}}
\def\sb{\subset}\def\su{\subset}
\def\lb{\langle}\def\rb{\rangle}\def\ot{\otimes}\def\t{\times}\def\sm{\setminus}
\def\c{\gamma}
\def\a{\alpha}\def\b{\beta}\def\d{\delta}\def\e{\epsilon}\def\s{\sigma}\def\De{\Delta}\def\La{\Lambda}\def\o{\omega}\def\la{\lambda}\def\la{\lambda}\def\p{\partial}\def\S{\Sigma}
\def\mb{\mathbb}\def\mf{\mathfrak}\def\mc{\mathcal}\def\ov{\overline}\def\wt{\widetilde}
\def\gl{\mathfrak{gl}}
\def\sl2{\mathfrak{sl}_2}
\def\su2{\mathfrak{su}(2)}
\def\Aut{\text{Aut}\,}
\def\id{\text{id}}\def\Ker{\text{Ker}\,}
\def\Im{\text{Im}\,}\def\Hom{\text{Hom}\,}
\def\char{\text{char}\,}\def\aug{\text{aug}\,}
\def\inte{\text{int} \,}
\def\Ham{\text{Ham}}
\def\Spinc{\text{Spin}^c}
\def\TH3{\Theta_3^{H}}
\def\x{\textbf{x}}\def\y{\textbf{y}}
\def\aa{\boldsymbol\a}\def\bb{\boldsymbol\b}
\def\HD{(\S,\aa,\bb)}\def\HH{\mc{H}}
\def\Tab{\T_{\a}\cap\T_{\b}}
\def\Sa{\S\sm\aa}\def\Sb{\S\sm\bb}
\def\ss{\mf{s}}
\def\L{\mc{L}}
\def\Uq{U_q(\mf{sl}_2)}\def\Usl2{U_q(\sl2)}\def\usl2{\wt{U}_q(\sl2)}\def\Uqgl11{U_q(\mathfrak{gl}(1|1))}\def\Uqsl11{U_q(\mathfrak{sl}(1|1))}\def\Uq{U_q(\mf{sl}_2)}
\def\Rep{\text{Rep}}
\def\II1{\text{II}_1}
\def\mod2{\ (mod \ 2)}
\def\L2{L^2}\def\l2{l^2(G)}\def\Cn2{C_n^{(2)}}\def\Hn2{H_n^{(2)}}\def\bn2{b_n^{(2)}}\def\pn2{\p_n^{(2)}}
\def\W1p{W^{1,p}}\def\Wd1p{W_{\d}^{1,p}}
\def\L{\mathcal{L}}
\def\ZZ{\mathcal{Z}}
\def\kk{\mathbb{K}}\def\gl{\mathfrak{gl}}\def\gl11{\mathfrak{gl}(1|1)}
\def\wtmu{\wt{\mu}}
\def\bolda{\boldsymbol{a}}
\def\aaa{\aa\cup\bolda}
\def\aaa{\aa^e}
\def\HDD{(\S,\aaa,\bb)}
\def\wtmu{\mu}
\def\Ua{U_{\a}}\def\Ub{U_{\b}}
\def\nequiv{\not\equiv}\def\gg{\mathfrak{g}}\def\Hn{H(n)}
\def\Ha{H_{\a}}\def\Ham{H_{\a^{-1}}}\def\Haot{H_{\a_1\a_2}}
\def\Sa{S_{\a}}\def\Sam{S_{\a^{-1}}}
\def\pia{G}
\def\Dea{\De_{\a}}
\def\maot{m_{\a_1,\a_2}}
\def\ca{c_{\a}}
\def\Haot{H_{\a_1\a_2}}\def\HaoHat{H_{\a_1}\ot H_{\a_2}}\def\Dea{\De_{\a}}\def\ea{\e_{\a}}
\def\ca{\coint_{\a}}
\def\ia{i_{\a}}
\def\Aut{\text{Aut}}
\def\uH{\underline{H}} \def\wtH{\wt{H}}\def\uHH{\uH=\{\Ha\}_{\a\in G}}
\def\gda{g^*_{\a}}
\def\lH{r_H}
\def\rhoc{\rho\ot h}
\def\ZHrho{Z_H^{\rho}}
\def\rhoc{\rho\ot h}
\def\ZHrho{Z_H^{\rho}}
\def\LaV{\La(V)}
\def\ovbx{\ov{\b}_x}
\def\FM{F_M}\def\kkM{\kk_M}\def\rhoc{\rho\ot h}\def\HM{H_M}
\def\coint{\boldsymbol{c}}
\def\int{\boldsymbol{\mu}}
\def\LaV{\La(V)}
\def\ZHrho{Z^{\rho}_H}\def\rhophi{\rho_{\phi}}
\def\rhoc{\rho\ot h}\def\Der{\De^{\rhoc}}
\def\ovtau{\tau_0}
\begin{document}

\title[Twisting Kuperberg invariants via Fox calculus]{Twisting Kuperberg invariants via Fox calculus and Reidemeister torsion}
\author{Daniel L\'opez Neumann}
\address{Institut de Math\'ematiques de Jussieu - Paris Rive Gauche, Universit\'e Paris Diderot, F-75013 Paris, France}
\email{lopezd@imj-prg.fr}

\maketitle

\begin{abstract}

% RATHER THAN SAYING "THERE IS AN INVARIANT" I SHOULD MENTION SOME MORE FAMILIAR CONCEPTS BEHIND IT: Maybe: In this paper, we study a twisting procedure for Kuperberg invariants, appropriately extended to sutured 3-manifolds. The twisting map is a representation into Aut(H) and it has a form of Fox calculus. DO BETTER: The point is not that Kuperberg invariants can be twisted, but rather, that we do it via Fox calculus. We explain that this comes from taking a sd product with Aut, and its origin from the theory of Hopf group-coalgebras. Moreover, Spinc normalizations and hmlgy ors appear and we relate it to Hopf algebra theory (ok, this not for an abstract).

We study Kuperberg invariants for sutured manifolds in the case of a semidirect product of an involutory Hopf superalgebra $H$ with its automorphism group $\Aut(H)$. These are topological invariants of balanced sutured 3-manifolds endowed with a homomorphism of the fundamental group into $\Aut(H)$ and possibly with a $\Spinc$ structure and a homology orientation. We show that these invariants are computed via a form of Fox calculus and that, if $H$ is $\N$-graded, they can be extended in a canonical way to polynomial invariants. When $H$ is an exterior algebra, we show that this invariant specializes to a refinement of the twisted relative Reidemeister torsion of sutured 3-manifolds. We also give an explanation of our Fox calculus formulas in terms of a particular Hopf group-algebra.
%The invariant is a scalar of the form $Z_H^{\rho}(M,\c,\ss,\o)$ where $(M,\c)$ is a balanced sutured manifold, $\ss\in\Spinc(M,\c)$, $\o$ is an orientation of $H_*(M,R_-;\R)$ and $\rho: H_1(M)\to G$ is an abelian representation into a group that depends on $H$.
\end{abstract}

\tableofcontents

\section[Introduction]{Introduction}

Topological invariants of knots and 3-manifolds may be built, essentially, from two methods. %THE PRESENT PAPER IS ABOUT THE METHODS/STRUCTURE TO BUILD INVARIANTS, NOT ABOUT THEIR CONTENT!!!%, differing both in the mathematics underlying their construction and the information that they capture. 
On the one hand, one can define topological invariants using the tools of classical algebraic topology. These are referred as {\em classical invariants} and include the Alexander polynomial of knots \cite{Alexander:topological} and, more generally, the Reidemeister torsion of 3-manifolds \cite{Reidemeister}. These invariants contain deep topological information and, as initiated by Lin in the 90's, they can be further strengthened by twisting with a (non-abelian) representation of the fundamental group \cite{Lin:representations}. The resulting twisted invariants turn out to be extremely powerful, for instance, they sometimes detect mutation and non-invertibility of some knots \cite{Wada:twisted, KL:twisted, KL:mutation} and, when taken all together, they detect the Seifert genus of a knot and whether a 3-manifold fibers over the circle \cite{FV:Thurston, FV:fibered}.

%, and the resulting invariants are usually referred as {\em classical invariants}. 
% COULD ORGANIZE MORE LIKE SECOND PARAGRAPH: These are usually referred as classical invs, and contain deep top info. Alex and Reidm. are instances of such invariants. Moreover, as initiated by Lin etc etc.
%This includes the Alexander polynomial of knots and, more generally, the Reidemeister torsion of 3-manifolds \cite{Alexander:topological, Reidemeister}. This type of invariants, usually referred as {\em classical invariants}, contain a good deal of topological information and, as initiated by Lin in the 90's, they can be further strengthened by twisting with a (non-abelian) representation of the fundamental group \cite{Lin:representations}. The resulting twisted invariants turn out to be extremely powerful, for instance, they sometimes detect mutation and non-invertibility of some knots \cite{Wada:twisted, KL:twisted, KL:mutation} and, when taken all together, they detect the Seifert genus of a knot and whether a 3-manifold fibers over the circle \cite{FV:Thurston, FV:fibered}.
\medskip

%Would be nicer to formulate the following sentence: one can build invariants from the methods of {\em quantum topology}, a vast domain that relates low dimensional topology to (the physics of) quantum field theory, (and the mathematics of) representation theory, more precisely, monoidal cats Hopf algebras and q groups.

%On the other hand, one can build topological invariants by relying on methods from representation theory, more precisely, the theory of monoidal categories, Hopf algebras and quantum groups. These are the so called {\em quantum invariants}, which were introduced during the 80's through the pioneering works of Jones, Witten, and Reshetikhin-Turaev \cite{Jones:polynomial, Witten:quantum, RT1, RT2}. For links in the three-sphere these invariants are an immediate output of the structure of a (ribbon) monoidal category,  %, the main example being the category of modules over a (ribbon) Hopf algebra \cite{Turaev:BOOK1}. 

On the other hand, one can build invariants using the tools of {\em quantum topology}, a vast domain that relates low dimensional topology to quantum field theory and representation theory. %, more precisely, the theory of monoidal categories, Hopf algebras and quantum groups. 
These are the so called {\em quantum invariants}, which were introduced during the 80's through the pioneering works of Jones, Witten, and Reshetikhin-Turaev \cite{Jones:polynomial, Witten:quantum, RT1, RT2}. %THE NEXT 2 SENTENCES HAVE TO BE SIMPLIFIED, COULD SAY A WORD LATER IN BACKGROUND PART.
Mathematically speaking, the construction of quantum invariants relies on the theory of (braided) monoidal categories, Hopf algebras and, in particular, quantum groups. For instance, the Jones polynomial of links and the Witten-Reshetikhin-Turaev invariants (WRT) of closed 3-manifolds are built through the monoidal category of representations of the quantum group $\Uq$ \cite{Turaev:BOOK1}. One can also build invariants of closed 3-manifolds directly from an arbitrary finite dimensional Hopf algebra $H$, through the theory of Hopf algebra (co)integrals. This method was introduced by Kuperberg \cite{Kup1, Kup2} and Hennings \cite{Hennings:invariants}, in different settings. These two approaches are now known to be essentially equivalent \cite{CC:ontwoinvariants} and they relate to WRT when $H=\Uq$ at a root of unity \cite{CKS:relation-WRT-Henn}. %These two approaches are interrelated through the Drinfeld double construction \cite{CC:ontwoinvariants} and specialize to WRT for a particular Hopf algebra \cite{CKS:relation-WRT-Henn}. 
There exists also a theory of %``twisted quantum invariants", that is, 
quantum invariants of pairs $(M,\rho)$, where $M$ is either a closed 3-manifold or a link complement and $\rho$ is a homomorphism of $\pi_1(M)$ into some group $G$. These are obtained by extending the previous methods to monoidal categories or Hopf algebras graded by $G$ \cite{Turaev:homotopy, Turaev:BOOK-HQFT}.

%COULD SAY:These are obtained by extending the previous methods to $G$-graded objects, such as braided G-categories, Hopf group-coalgebras, etc.

% MENTION TQFT AND HQFT?

\medskip

It turns out that the above two families of invariants are not disjoint, even though they are built from very different methods. More precisely, some classical invariants can be realized as quantum invariants, in general through the representation theory of an appropiate Hopf algebra $H$. For instance, the Alexander polynomial of links in the three-sphere can be obtained through such methods if $H=\Uqgl11$ \cite{Reshetikhin:supergroup, RS:Alexander} or $\Uq$ at $q=i$ \cite{Murakami:Alexander}. The abelian Reidemeister torsion of closed 3-manifolds has been shown to be obtained via an ``unrolled" version of $\Uq$ at $q=i$ \cite{BCGP}. %These results rely on the skein relation or state sum characterization of the Alexander polynomial, and so they do not generalize to other instances of Reidemeister torsion such as twisted Alexander polynomials or Alexander polynomials of links in arbitrary homology spheres. 
More recently, a special instance of the $SL(2,\C)$-twisted torsion of the complement of a link $L\sb S^3$ has been obtained through the representation theory of a graded object associated to $\Uq$ (in its unrestricted version) \cite{McPhail-Snyder:holonomy}. In another direction, the author showed that the abelian relative torsion of balanced sutured 3-manifolds can be obtained through a generalization of the Hopf algebraic approach of Kuperberg, specialized to the Borel part of $\Uqgl11$ \cite{LN:Kup}. However, neither of these works capture the more general aspects of Reidemeister torsion theory, such as $GL(n,\C)$-twisted Alexander polynomials, torsion of links in arbitrary homology spheres, etc. Understanding Reidemeister torsion as part of quantum topology seems an important issue in order to find topological applications of quantum invariants, which often are weaker than their classical counterparts. %In \cite{LN:Kup} we added a similar result to this list through Kuperberg's (involutory) Hopf algebra approach. However, this differs from the preceding results in that it recovers the abelian relative Reidemeister torsion of balanced sutured 3-manifolds (via the Borel part of $\Uqgl11$) and it does so via Fox calculus.

%it was noticed since the very beginning of the subject that these invariants capture a different type of information of knots. For instance, the Jones polynomial lead to a simple proof of the long standing Tait conjectures, something that eluded any classical methods. The colored Jones polynomials have also been related to the volume of hyperbolic knots, at least at a conjectural level. Nevertheless, neither the Jones polynomials or the more general quantum invariants coming from ribbon monoidal categories and Hopf algebras appear to be related to purely topological aspects of knots such as the Seifert genus, fiberedness or sliceness. %The quantum invariants of closed 3-manifolds coming from modular categories (e.g. the Witten-Reshetikhin-Turaev invariants) have clear topological limitations, as shown by Funar in \cite{Funar:torus} and the approaches of Kuperberg and Hennings haven't been studied at all from a topological perspective. 
%This is striking if one takes into account that the Alexander polynomial of links in the three-sphere can be seen as a quantum invariant, where the corresponding quantum group is either $\Uqgl11$ or $\Uq$ at a fourth root of unity \cite{Reshetikhin:supergroup, RS:Alexander, Murakami:Alexander}, as well as the abelian Reidemeister torsion of closed 3-manifolds \cite{BCGP}. These results rely on the skein relation characterization of the Alexander polynomial though, which hides its topological meaning.
\medskip

In this paper, we take a step in this direction by showing that several aspects of Reidemeister torsion theory, such as Fox calculus, $\Spinc$ refinements and twisted polynomials, are general Hopf algebra constructions and therefore belong to the realm of quantum topology. The twisted relative Reidemeister torsion of balanced sutured 3-manifolds, hence also twisted Alexander polynomials of links, is shown to be a special case of such construction. We achieve this through our sutured manifold extension of involutory Kuperberg invariants \cite{LN:Kup} restricted to a semidirect product $\kk[\Aut(H)]\ltimes H$. The key idea is to consider this semidirect product relative to $\kk[\Aut(H)]$, or equivalently, as a Hopf group-algebra graded by $\Aut(H)$. %In particular, the twisted relative Reidemeister torsion of sutured 3-manifolds, and hence twisted Alexander polynomials of links %, more specifically, Hopf algebra theory. %we find a general quantum topology setting that allows to realize all of twisted Reidemeister torsion (of sutured manifolds) as a quantum invariant. %Based on our extension of Kuperberg's involutory Hopf algebra approach \cite{LN:Kup}, we show that Fox calculus is a general Hopf algebraic procedure originating from the semidirect product structure of $\kk[\Aut(H)]\ltimes H$. We also develop a theory of twisted polynomials for $\N$-graded Hopf algebras. When $H$ is an exterior Hopf algebra, our procedures %
Thus, our results indicate that Reidemeister torsion naturally belongs to the world of homotopy quantum field theory (HQFT) of Turaev \cite{Turaev:homotopy, Turaev:BOOK-HQFT}. %We believe that this is the right setting to find applications of quantum invariants to purely topological questions, such as the knot genus, fiberedness, etc.

\medskip

\subsection*{Background} To describe our results in detail we recall a few notions and previous work. First, recall that a balanced sutured 3-manifold is a pair $(M,\c)$ where $M$ is a 3-manifold with non-empty boundary and $\c$ is a collection of annuli in $\p M$ dividing the boundary into two homeomorphic pieces $R_{-}(\c)$ and $R_+(\c)$ \cite{Gabai:foliations}. For instance, the complement of a link in an arbitrary closed 3-manifold can be considered as a sutured manifold by letting $\c$ consist on two meridians on each boundary component. A fundamental topological invariant of balanced sutured manifolds is the twisted relative Reidemeister torsion $\tau^{\rho}(M,R_-(\c))\in\kk$, which depends on $(M,\c)$ together with a homomorphism $\rho:\pi_1(M)\to GL(V)$ for some finite dimensional vector space $V$ over a field $\kk$ \cite{Turaev:BOOK2}. If $h:\pi_1(M)\to H_1(M)$ is the Hurewicz map, one can extend the torsion to $\tau^{\rho\ot h}(M,R_-(\c))\in\kk[H_1(M)]$. This extension generalizes the twisted Alexander polynomials of links and for $\rho\equiv 1$, it is the Euler characteristic of a Floer homology invariant of sutured manifolds \cite{FJR11}. The torsion is actually defined up to a $\pm\det\rho(g)$ indeterminacy with $g\in\pi_1(M)$, but this can be corrected by picking a $\Spinc$ structure $\ss$ and a homology orientation $\o$ (i.e. an orientation of the vector space $H_*(M,R_-(\c);\R)$) \cite{Turaev:BOOK2, FJR11} so it has the form $$\tau^{\rho}(M,R_-(\c),\ss,\o)\in\kk.$$ %The balanced condition amounts to $\chi(M,R_-(\c))=0$, and this ensures that the twisted Reidemeister torsion $\tau^{\rho}(M,R_-(\c))\in\kk$is defined, where $\rho:\pi_1(M)\to GL(V)$ is a homomorphism and $V$ is a finite dimensional vector space over $\kk$ \cite{Turaev:BOOK1}. 
 %It is well-known that $\tau^{\rho}(M,R_-(\c))$ is easily computed from a sutured Heegaard diagram $\HH=(\S,\{\a_i\}_{i=1}^d,\{\b_j\}_{j=1}^d)$ of $(M,\c)$ via Fox calculus. %\begin{align*}\tau^{\rho}(M,R_-(\c))=\det\left(\rho\left(\s\left(\frac{\p\ov{\b}_j}{\p\a^*_i}\right\right)\right)_{i,j=1,\dots, d}\end{align*}
%, $\s:\Z[\pi]\to\Z[\pi]$ is the map characterized by $\s(g)=g^{-1}$ for each $g\in\pi$ and $\d$ is a sign that depends on the diagram and $\o$.
%The above formula has a $\pm\det(\rho(g))$ indeterminacy with $g\in\pi$, which can be removed using $\Spinc$ structures and homology orientations \cite{Turaev:Spinc, FJR11}.\medskip

\medskip

%COULD CONNECT FIRST PARAGRAPH WITH SECOND VIA HEEGAARD DIAGRAMS?

Recall also that given an arbitrary finite dimensional Hopf (super)algebra $H$ over a field $\kk$, Kuperberg defines a topological invariant $I^{Kup}_H(Y,f)\in\kk$ of a closed oriented 3-manifold $Y$ endowed with a framing $f$ of its tangent bundle \cite{Kup2}. This relies on the Heegaard diagrammatic presentation of closed 3-manifolds along with the theory of Hopf algebra (co)integrals (e.g. \cite{Radford:BOOK}). When $H$ is the Borel of $\Uq$ at a root of unity, this invariant is related to WRT invariants \cite{CC:ontwoinvariants, CKS:relation-WRT-Henn}. %As recently shown by Chang-Cui \cite{CC:ontwoinvariants}, this invariant coincides with another quantum invariant: the Hennings invariant at the Drinfeld double $D(H)$ of $H$. When this result is combined with \cite{CKS:relation-WRT-Henn}, it follows that when $H$ is the Borel of $\Uq$ at a root of unity, Kuperberg's invariant is related to WRT invariants. 
However, the appearance of framings makes the computation of this invariant quite challenging. If $H$ is involutory, i.e. $S^2=\id_H$ where $S$ is the antipode of $H$, Kuperberg's invariant is independent of the framing \cite{Kup1} hence we denote it by $$I_H^{Kup}(Y)\in\kk.$$ 
%Using a finite type involutory Hopf $G$-coalgebra \cite{Turaev:homotopy, Virelizier:Hopfgroup} (which amounts to a Hopf algebra graded by a group $G$), one can also define Kuperberg-like invariants of pairs $(Y,\rho)$ where $Y$ is closed and $\rho:\pi_1(Y)\to G$ is a homomorphism, denoted $$I^{\rho}_{\uH}(Y)\in\kk.$$
The involutory invariant admits an extension to pairs $(Y,\rho)$, where $Y$ is a closed 3-manifold and $\rho:\pi_1(Y)\to G$ is a homomorphism into some group $G$ \cite{Virelizier:flat}. This relies on a finite-type involutory Hopf $G$-coalgebra $\uHH$ (which amounts to a Hopf algebra graded by $G$) \cite{Turaev:homotopy, Virelizier:Hopfgroup} and is denoted $$I^{\rho}_{\uH}(Y)\in\kk.$$ 
This specializes to $I_{H_1}^{Kup}(Y)$ if $\rho$ is trivial. %in that $I_{\uH}^{\rho\equiv 1}(Y)=I_{H_{1}}^{Kup}(Y)$ where $1\in G$ is the neutral element. %It has to be noted though, that the involutory condition is quite restrictive and in particular excludes the quantum groups involved in the definition of WRT invariants, see Appendix \ref{sect: going beyond R torsion}. 
It has to be noted that the assumptions of \cite{Kup1} and \cite{Virelizier:flat} imply semisimplicity of the relevant Hopf algebras \cite{LR:cosemisimplechar0}. Recently, a candidate for an unframed version of Kuperberg's invariant has been proposed only assuming unimodularity of $H$ \cite{CGPT}. However, neither of these unframed approaches can be directly related to WRT, since the Borel of $\Uq$ at a root of unity is non-involutory and non-unimodular.

\medskip

% MAYBE here I should talk about different extensions of the involutory situation: Virelizier and LN.

Now, in \cite{LN:Kup}, the author generalized Kuperberg's involutory invariant to balanced sutured 3-manifolds. This construction relied on sutured Heegaard diagrams \cite{Juhasz:holomorphic} along with relative versions of the Hopf algebra (co)integrals. More precisely, the algebraic input consisted essentially of an involutory Hopf superalgebra $\wt{H}$ (possibly of infinite dimension) relative to a pair of Hopf subalgebras $A,B\sb \wt{H}$, where $A$ is the domain of the cointegral and $B$ is the target of the integral. When $B=\kk$ the output is an invariant $$\wt{I}_{\wt{H}}^{\rho}(M,\c,\ss,\o)\in\kk$$ where $(M,\c)$ is a balanced sutured 3-manifold, $\rho:\pi_1(M)\to G(A)$ is a homomorphism into the group-likes of $A$, $\ss\in\Spinc(M,\c)$ and $\o$ is a homology orientation. In contrast to the above unframed approaches \cite{Kup1, Virelizier:flat, CGPT}, here $\wt{H}$ may be non-unimodular and the appearance of $\ss$ and $\o$ is related to this, indeed, $\wt{I}^{\rho}_{\wt{H}}$ depends on $\ss,\o$ up to $\pm a^*(\rho(g))$ where $a^*:G(A)\to\kk^{\t}$ is the distinguished group-like of $\wt{H}$ rel $A$ and $g\in\pi_1(M)$. When $\wt{H}$ is the Borel of $\Uqgl11$ with an appropriate relative integral, we showed that $\wt{I}_{\wt{H}}^{\rho}$ is a refinement of the abelian torsion $\tau^{\rho}(M,R_-(\c))$, i.e. $\rho:H_1(M)\to\kk^{\t}$, %(and in particular, the usual multivariable Alexander polynomial of links) 
and we did this by relating the coproduct of $\wt{H}$ to Fox calculus.

%This shows that Fox calculus is a general Hopf algebraic procedure, and is not special to Reidemeister torsion or particular quantum groups.

\subsection*{Main results} In the present work, we show that a semidirect product $\wt{H}=\kk[\Aut(H)]\ltimes H$ fits into the setting of \cite{LN:Kup} with $A=\kk[\Aut(H)]$ and $a^*=\lH:\Aut(H)\to\kk^{\t}$ (where $\lH$ is as in Definition \ref{def: rH}) and we study the resulting invariant %, where $H$ is a finite dimensional Hopf superalgebra over a field $\kk$ that admits both a two-sided cointegral and integral (as in \cite{Kup1}). %Let $(M,\c)$ be a balanced sutured 3-manifold endowed with a homomorphism $\rho:\pi_1(M)\to \Aut(H)$ ($=G(A)$), a $\Spinc$ structure $\ss$ and a homology orientation $\o$. Then 
$\wt{I}_{\kk[\Aut(H)]\ltimes H}^{\rho}(M,\c,\ss,\o)\in\kk$, where $\rho:\pi_1(M)\to\Aut(H)$ is a homomorphism and $\ss,\o$ are as above. %This depends on $\ss$ and $\o$ only up to multiplication by $\pm \lH(\rho(g))$ for some $g\in\pi_1(M)$, where $\lH:\Aut(H)\to\kk^{\t}$ is the homomorphism characterized by $\a(\coint)=\lH(\a)\coint$ for all $\a\in\Aut(H)$ and $\coint\in H$ is the cointegral. 
We show that when the definining formula of this invariant is rewritten only in terms of the structure tensors of $H$ and the homomorphism $\rho$, then one gets a formula very similar to the original one of Kuperberg \cite{Kup1} but in which $\rho$ twists the structure tensors via Fox calculus. Since the algebraic input only depends on $H$, we denote this formula by $I_H^{\rho}(M,\c,\ss,\o)$ and call it a {\em twisted Kuperberg invariant}. We develop the formula for $I_H^{\rho}$ independently of \cite{LN:Kup} in Section \ref{sect: the invariant}. Thus, we can state our first main result as follows.

\medskip

%COULD SAY: 
%In order for our constructions to be more transparent, we give in Section \ref{sect: Reidemeister torsion of sutured manifolds} a detailed account of the (refined) twisted Reidemeister torsion $\tau^{\rho}(M,R_-(\c))$, where $\rho:\pi_1(M)\to GL(V)$ is a homomorphism. It is well-known that this invariant is computed via Fox calculus once a presentation of $\pi_1$ is given, we will assume such presentation comes from an extended Heegaard diagram $\HH=\HDD$ of $(M,\c)$ (see Subsection \ref{subs: HDs extended}). Then, based on \cite{LN:Kup}, we give a direct $\Spinc$ and sign refinement of such Fox calculus formula by \begin{align*}\ovtau^{\rho}(M,\c,\ss,\o)\eq \d^{\dim(V)}\cdot\det\circ \rho(PD[s(\x)-\ss])\cdot\det\left(\rho\left(\s\left(\frac{\p\ov{\b}_j}{\p\a^*_i}\right)\right)\right)_{i,j=1,\dots, d}\in\kk\end{align*}where $\x$ is a multipoint of $\HH$, the relators $\ov{\b_j}$ depend on $\x$, $s$ is the map from multipoints to $\Spinc$ of \cite{Juhasz:holomorphic} and $\d$ depends on $\o$, see Subsection \ref{subs: normalizing tau via Spinc} for more details.\medskip

%In Section \ref{sect: the invariant} we give a (involutory) Hopf algebra generalization of the above formula. 

\begin{Theorem}
\label{Theorem: I for sd product is Fox calculus}
Let $H$ be a finite dimensional involutory Hopf superalgebra over a field $\kk$ with a two-sided cointegral and integral. Let $(M,\c)$ be a balanced sutured 3-manifold, $\rho:\pi_1(M)\to\Aut(H)$ a group homomorphism, $\ss\in\Spinc(M,\c)$ and $\o$ an orientation of $H_*(M,R_-(\c);\R)$. Then the invariant $\wt{I}^{\rho}_{\wt{H}}$ at a semidirect product coincides with the Fox calculus invariant $I_H^{\rho}$:

$$\wt{I}_{\kk[\Aut(H)]\ltimes H}^{\rho}(M,\c,\ss,\o)= I_H^{\rho}(M,\c,\ss,\o).$$

%\noindent This invariant is independent of the $\Spinc$ structure if $\Im(\rho)\sb \Ker(\lH)$, where $\lH:\Aut(H)\to\kk^{\t}$ is characterized by $\a(\coint)=\lH(\a)\coint$ for each $\a\in\Aut(H)$, and is independent of $\o$ if the cointegral of $H$ has mod 2 degree zero, so it is denoted simply by $I_H^{\rho}(M,\c)$ in this case.
\end{Theorem}

We also provide an explanation of the Fox calculus formula that defines $I_H^{\rho}$ in terms of Hopf group-algebras (the dual notion of a Hopf group-coalgebra). We show that the semidirect product $\kk[\Aut(H)]\ltimes H$ determines a Hopf $\Aut(H)$-algebra $\uH$ for which  $$I^{\rho}_{\uH}(Y)=I_{H}^{\rho}(M_0,\c_0),$$ % MENTION WHY THERE IS NO SPINC HERE!
where the left hand side is the (dual version of the) invariant of Virelizier \cite{Virelizier:flat} and the right hand side is ours for $M_0=Y\sm B^3$, where $B^3$ is an open 3-ball embedded in $Y$ and $\c_0$ is a single suture in $\p M_0$, see Proposition \ref{prop: Virelizier at sd product is ours}. %(note that the right hand side has no $\Spinc$ or $\o$ since \cite{Virelizier:flat} assumes semisimplicity of $H$). 
Moreover, we relate the relative integral approach of \cite{LN:Kup} to the Hopf group-(co)algebra approach. Therefore, our construction can also be understood as a generalization to sutured manifolds of \cite{Virelizier:flat} restricted to a particular Hopf group-algebra (which in our case may be non-semisimple). In addition, the Hopf group-algebra approach clarifies the appearance of $\Spinc$ structures and homology orientations, see Remark \ref{remark: Spinc and o vs non-unimodular}.%: the graded-cointegral of such $\uH$ is not two-sided if $\lH\nequiv 1$ (see Example \ref{example: Hopf group-algebra from semidirect product}) and the $\Spinc$ structure comes to fix this. On the other hand, even if we assume that the cointegral of the neutral component (which is $H$) is assumed to be two-sided, $H$ is not unimodular if the cointegral has mod 2 degree one (see Remark \ref{remark: unimodularity}) and the orientation $\o$ fixes this. %It has to be noted that \cite{Virelizier:flat} assumes semisimplicity of $\uH$, and hence there are no $\Spinc$ structures or homology orientations.
\medskip

Our construction has the additional feature that it can be extended to define polynomial invariants provided $H$ is $\N$-graded. Indeed, in such a case, any $\rho:\pi_1(M)\to\Aut(H)$ can be combined with the Hurewicz map $h:\pi_1(M)\to H_1(M;\Z)$ to define a homomorphism $\rho\ot h:\pi_1(M)\to\Aut(H\ot_{\kk}\kk[H_1(M)])$. Hence, there is an invariant $$I_H^{\rho\ot h}(M,\c,\ss,\o)\in\kk[H_1(M)]$$
which we call a {\em twisted Kuperberg polynomial} and this specializes to $I_H^{\rho}(M,\c,\ss,\o)$ via the augmentation map $\aug:\kk[H_1(M)]\to\kk$. %This process mimics the passage from the torsion $\tau^{\rho}\in\kk$ to $\tau^{\rho\ot h}\in\kk[H_1(M)]$. 
In particular, this procedure defines (Hopf algebraic) twisted multivariable polynomial invariants of links in arbitrary homology spheres. When $\rho\equiv 1$, this link invariant can be considered as a ``polynomial deformation" of the Kuperberg invariant of the underlying closed 3-manifold, see Corollary \ref{corollary: multivariable link polynomial}.

%This has the following special instance (see Corollary \ref{corollary: multivariable link polynomial}): let $L$ be an (ordered, oriented) $m$-component link in an homology 3-sphere $Y$ and let $(M_L,\c_L)$ be the sutured complement. Then we can identify $\kk[H_1(M_L)]\cong \kk[t_1^{\pm 1},\dots, t_m^{\pm 1}]$, so $I_H^{\rho\ot h}(M_L,\c_L,\ss,\o)$ is a multivariable polynomial invariant of $(L,\rho)$. When $\rho\equiv 1$, this can be considered as a ``polynomial deformation" of the Kuperberg invariant of closed 3-manifolds since $$I_H^{h}(M_L,\c_L,\ss,\o)|_{t_1=1,\dots,t_m=1}=I_H^{Kup}(Y).$$

\medskip

In our second main theorem we show that all the above procedures generalize existing ones in Reidemeister torsion theory. Indeed, let $\La(V)$ be the exterior algebra of an $n$-dimensional vector space $V$ over a field $\kk$. This is an $\N$-graded involutory Hopf superalgebra with $\Aut(\La(V))\cong GL(V)$ and the distinguished group-like of the associated semidirect product is $r_{\La(V)}=\det:GL(V)\to\kk^{\t}$.% and $r_{\La(V)}=\det$.

\begin{Theorem}
\label{Theorem: I at exterior is torsion}
For an arbitrary $(M,\c)$, $\rho:\pi_1(M)\to GL(V)$, $\ss\in\Spinc(M,\c)$ and $\o$ as above we have
\begin{align*}
I^{\rho}_{\LaV}(M,\c,\ss,\o)=\ovtau^{\rho^{-T}}(M,\c,\ss,\o)
\end{align*}
where $\ovtau$ denotes the refinement of the twisted Reidemeister torsion $\tau^{\rho^{-T}}(M,R_-(\c))$ of Subsection \ref{subs: normalizing tau via Spinc} \footnote{The right hand side is computed at the inverse-transpose because we follow the conventions of \cite{FV:survey} for the torsion.}. Similarly, we have
\begin{align*}
I^{\rhoc}_{\LaV}(M,\c,\ss,\o)=\s(\ovtau^{\rho^{-T}\ot h}(M,\c,\ss,\o))\in \kk[H_1(M)]
\end{align*}
where $\s:\kk[H_1(M)]\to\kk[H_1(M)]$ is the $\kk$-linear map characterized by $\s(f)=f^{-1}, f\in H_1(M)$.
 %If $\Im(\rho)\sb SL(V)$ and $\dim(V)$ is even, then $I_{\La(V)}^{\rho}(M,\c)$ is equal to the torsion $\tau^{\rho^{-T}}(M,R_-(\c))$. 
\end{Theorem}

This theorem generalizes to the non-abelian setting the result of \cite{LN:Kup} mentioned above. It turns out that there is a much simpler proof of the general case, relying on the universal property of the Hopf superalgebra $\La(V)$. It has to be noted that the dual of the Borel of $\Uqgl11$ at a root of unity of order $n$ considered in \cite{LN:Kup} is a semidirect product $\kk[\Z/n\Z]\ltimes \La(\kk)$ as in the present work (where $\Z/n\Z$ acts over $\kk$ by multiplication by an $n$-th root of unity). For link complements, the right hand side is equivalent to the twisted Alexander polynomials (see Corollary \ref{corollary: sutured torsion recovers TWISTED ALEX}), hence we get:

\begin{Corollary}
\label{Corollary: I at ext of link is twisted Alex poly}
Let $L$ be an ordered, oriented $m$-component link in an homology sphere $Y$ and $\rho:\pi_1(M_L)\to GL(V)$ a homomorphism, where $M_L=Y\sm L$. Then the twisted Kuperberg polynomial of $L$ at an exterior algebra is equivalent to the twisted Alexander polynomial: $$\s(I_{\La(V)}^{\rho^{-T}\ot h}(M_L,\c_L,\ss,\o))\dot{=}\frac{\prod_{i=1}^m\det(t_i\rho(a^*_i)-I_n)}{\De_{L,0}^{\rho}}\cdot \De_L^{\rho}(t_1,\dots,t_m)\in\kk[t_1^{\pm 1},\dots,t_m^{\pm 1}]$$
where $\De_{L,0}^{\rho}$ is the 0-th twisted Alexander polynomial of $L$ (which is always non-zero). Here $\dot{=}$ stands for equality up to multiplication by $\pm \det(\rho(g))\cdot t_1^{n_1}\cdots t_m^{n_m}$ for some $g\in\pi_1(M_L)$ and $n_i\in\Z$, and $\s$ is defined as above.
%whenever $m>1$ or $m=1$ and $\rho$ is irreducible with $\rho|_{\Ker(h)}\neq 1$. If $m=1$ and $\rho\equiv 1$ (with $\dim(V)=1$), then $I_{\La(V)}^{h}(M_L,\c_L,\ss,\o)\dot{=}\De_L$. Here $\dot{=}$ stands for multiplication up to $\pm \det(\rho(g))\cdot t_1^{n_1}\cdots t_m^{n_m}$ for some $g\in\pi_1(M_L)$ and $n_i\in\Z$.
\end{Corollary}

\subsection*{Future directions} As mentioned above, Kuperberg invariants of closed 3-manifolds can be defined out of an arbitrary finite dimensional Hopf algebra, but without the involutory condition, the 3-manifolds need to be framed. Finding an involutory Hopf algebra for which our extension of Kuperberg invariants gives something different than Reidemeister torsion seems unlikely (see Subsection \ref{sect: going beyond R torsion} for concrete reasons). Thus, our results should be generalized to non-involutory Hopf algebras. %Note that an approach to build unframed Kuperberg-like invariants out of non-involutory unimodular Hopf algebras has recently appeared \cite{CGPT}. Still, one wishes a construction that works for non-unimodular Hopf algebras, namely, Borel parts of quantum groups since this would be related to WRT in view of \cite{CC:ontwoinvariants, CKS:relation-WRT-Henn}. %Indeed, as shown by Chang-Cui \cite{CC:ontwoinvariants}, $I_H^{Kup}(Y,f)$ coincides with a normalization of the Hennings invariant at the Drinfeld double $D(H)$. If $H$ is the Borel of $\Uq$ at a root of unity (also called a Taft Hopf algebra), the latter invariant is related to WRT, at least for homology 3-spheres \cite{CKS:relation-WRT-Henn}. 

\medskip

On the other hand, one knows from the works of Kirk-Livingston \cite{KL:mutation} and Friedl-Vidussi \cite{FV:Thurston, FV:fibered} that twisted Alexander polynomials are an extremely powerful invariant and contain deep topological information. %For example, it gives obstructions to knot concordance \cite{KL:twisted}, \cite{KL:mutation}, it can detect mutation and inversion \cite{KL:mutation}, it detects the knot genus \cite{FV:Thurston} and whether a 3-manifold fibers over a circle \cite{FV:fibered}. 
It would be interesting to see whether some of these results extend to our twisted Kuperberg polynomials for other ($\N$-graded) Hopf algebras. Of course, for this to capture topological information beyond torsion one should use non-involutory Hopf algebras. More generally, it would be interesting to study possible topological applications of quantum invariants obtained through group-graded objects (Hopf group-coalgebras, modular $G$-categories, etc), and ultimately, of Turaev's homotopy quantum field theories.

%Finally, note that if $(M,\c)$ is a balanced sutured 3-manifold then the abelian Reidemeister torsion $\tau^h(M,R_-(\c))\in\Z[H_1(M)]$ is the Euler characteristic of an homological invariant, the so called {\em sutured Floer homology} $SFH(M,\c)$ defined by Juh\'asz in \cite{Juhasz:holomorphic}. The proof that $\chi(SFH(M,\c))=\tau^h(M,R_-)$ of \cite{FJR11} uses the Fox calculus expression of the torsion. Since our construction recovers this Fox calculus computation at a specific Hopf algebra, it may be possible that Kuperberg invariants for other Hopf algebras could be categorified via Lagrangian Floer homology. %This could be an approach to categorification of Witten-Reshetikhin-Turaev invariants: indeed, by \cite{CC:ontwoinvariants}, the Kuperberg invariant $Z_H(Y)$ is related to the Hennnings invariant $Henn_{D(H)}(Y)$ where $D(H)$ is the Drinfeld double of $H$. The latter invariant is related to WRT invariants when $H$ is the Borel subalgebra $B$ of $\Uq$ since $\Uq$ is a quotient of $D(B)$.

\subsection*[Structure of the paper]{Structure of the paper}

Sections \ref{section: Some Hopf algebra theory} and \ref{section: sutured manifolds} consist of background material on Hopf (super)algebras, sutured manifolds and (twisted) Reidemeister torsion.  In Section \ref{sect: the invariant} we give the Fox calculus formula for $I_H^{\rho}(M,\c,\ss,\o)$ and study some of its properties, notably, that it extends to $\kk[H_1(M)]$ for $\N$-graded Hopf superalgebras and that it recovers our refinement of torsion when $H$ is an exterior algebra (Theorem \ref{Theorem: I at exterior is torsion}). This section is independent of our previous work \cite{LN:Kup}. In Section \ref{section: Fox calculus and semidirect products} we prove Theorem \ref{Theorem: I for sd product is Fox calculus} and relate our construction to the Hopf group-algebra approach of \cite{Virelizier:flat}. Finally, we devote to an Appendix some (rather trivial) technical details concerning the case of non-abelian $\rho$, which was not considered in \cite{LN:Kup}. % as well as a discussion on the restrictions imposed by the involutory condition.

\subsection*[Acknowledgments]{Acknowledgments}

I would like to thank my PhD supervisor, Christian Blanchet, for all his support and suggestions during the course of my PhD. I would also like to thank Anna Beliakova, Andr\'es Fontalvo Orozco, Krzysztof Putyra and Alexis Virelizier for many interesting conversations. Finally, I would like to thank the referee for valuable comments. This project has received funding from the European Union's Horizon 2020 Research and Innovation Programme under the Marie Sklodowska-Curie Grant Agreement No. 665850.

\section{Hopf superalgebras}
\label{section: Some Hopf algebra theory}

In this section, we recall the notions of the theory of Hopf (super)algebras we need. For more details, see \cite{Radford:BOOK}. In what follows we assume all vector spaces are defined over a field $\kk$.

\subsection{Basic notions and notation}\label{subs: basic notions} %We briefly recall the algebra of super-vector spaces and tensor network notation. For more details, see \cite{Kup1, Kup2}.\medskip

By a super-vector space, we mean a vector space $V$ endowed with a direct sum decomposition $V=V_0\oplus V_1$. A vector $v\in V$ is said to be homogeneous if $v\in V_0$ or $v\in V_1$. If $v\in V_i$, we say that $v$ has degree $i$, denoted $|v|=i$ (mod 2). If $V,W$ are super-vector spaces, then $V\ot W$ is a super-vector space with
\begin{align*}
(V\ot W)_i\eq\bigoplus_j V_j\ot W_{i-j}
\end{align*}
where the indices are taken mod 2. The category of super-vector spaces and their degree zero linear maps forms a symmetric monoidal category with symmetry map
\begin{align*}
\tau_{V,W}:V\ot W&\to W\ot V\\
v\ot w&\mapsto (-1)^{|v||w|}w\ot v.
\end{align*}

\medskip

We will use tensor network notation for the morphisms of this category (see \cite{Kup1, Kup2}), that is, we denote the tensor factors of the domain of a linear map as incoming arrows and those of the target as outcoming arrows with the convention that the left to right direction in a tensor $V_1\ot V_2\ot\dots \ot V_n$ corresponds to the top to bottom direction in tensor network notation. Moreover, since the (super)vector spaces will usually be clear from the context, we drop them from the notation. For instance, a tensor $T:V\ot V\to W\ot W\ot W$ is denoted by
\begin{figure}[H]
\centering
\begin{pspicture}(0,-0.30805147)(1.7040216,0.30805147)
\rput[bl](0.6040216,-0.1){$T$}
\psline[linecolor=black, linewidth=0.018, arrowsize=0.05291667cm 2.0,arrowlength=0.8,arrowinset=0.2]{->}(0.0040216064,0.3)(0.40402162,0.1)
\psline[linecolor=black, linewidth=0.018, arrowsize=0.05291667cm 2.0,arrowlength=0.8,arrowinset=0.2]{->}(1.0040216,0.1)(1.4040216,0.3)
\psline[linecolor=black, linewidth=0.018, arrowsize=0.05291667cm 2.0,arrowlength=0.8,arrowinset=0.2]{->}(1.0040216,-0.1)(1.4040216,-0.3)
\psline[linecolor=black, linewidth=0.018, arrowsize=0.05291667cm 2.0,arrowlength=0.8,arrowinset=0.2]{->}(0.0040216064,-0.3)(0.40402162,-0.1)
\psline[linecolor=black, linewidth=0.018, arrowsize=0.05291667cm 2.0,arrowlength=0.8,arrowinset=0.2]{->}(1.0040216,0.0)(1.4040216,0.0)
%\rput[bl](1.6540216,-0.15){.}
\end{pspicture}
\end{figure}

\noindent where the rightmost arrows correspond, from top to bottom, to the tensor factors of $W\ot W\ot W$ read from left to right (and similarly for the leftmost arrows). A vector $v\in V$ , a covector $v^*\in V^*$ and the identity map $\id_V:V\to V$ are denoted by
\begin{figure}[H]
\centering
\begin{pspicture}(0,-0.155)(4.07,0.155)
\psline[linecolor=black, linewidth=0.018, arrowsize=0.05291667cm 2.0,arrowlength=0.8,arrowinset=0.2]{->}(0.32,-0.005)(0.72,-0.005)
\psline[linecolor=black, linewidth=0.018, arrowsize=0.05291667cm 2.0,arrowlength=0.8,arrowinset=0.2]{->}(1.62,-0.005)(2.02,-0.005)
\rput[bl](0.0,-0.075){$v$}
\rput[bl](2.15,-0.085){$v^*$}
\psline[linecolor=black, linewidth=0.018, arrowsize=0.05291667cm 2.0,arrowlength=0.8,arrowinset=0.2]{->}(3.42,-0.005)(3.82,-0.005)
\rput[bl](0.87,-0.155){,}
\rput[bl](2.57,-0.155){,}
\rput[bl](4.02,-0.105){.}
\end{pspicture}
\end{figure}
\noindent Composition of tensors is denoted by joining the corresponding outcoming/incoming arrows, the tensor product is denoted by stacking one figure over another and the symmetry $\tau_{V,W}$ of the category is denoted by a crossing pair of arrows. For instance, if $T_1:V\to V\ot V$ and 
$T_2:V\ot V\to V$ then $(T_2\ot\id_V)\circ (\id_V\ot T_1)$, $T_1\ot T_2$ and $\tau_{V,V}\circ T_1$ are respectively denoted by
\begin{figure}[H]
\centering
\begin{pspicture}(0,-0.8500985)(8.85,0.8500985)
\rput[bl](0.6,-0.44204703){$T_1$}
\psline[linecolor=black, linewidth=0.018, arrowsize=0.05291667cm 2.0,arrowlength=0.8,arrowinset=0.2]{->}(0.0,-0.24204704)(0.4,-0.24204704)
\psline[linecolor=black, linewidth=0.018, arrowsize=0.05291667cm 2.0,arrowlength=0.8,arrowinset=0.2]{->}(1.1,-0.14204705)(1.5,0.057952955)
\psline[linecolor=black, linewidth=0.018, arrowsize=0.05291667cm 2.0,arrowlength=0.8,arrowinset=0.2]{->}(1.1,-0.34204704)(1.5,-0.542047)
\psline[linecolor=black, linewidth=0.018, arrowsize=0.05291667cm 2.0,arrowlength=0.8,arrowinset=0.2]{->}(1.1,0.45795295)(1.5,0.25795296)
\rput[bl](1.7,-0.042047042){$T_2$}
\psline[linecolor=black, linewidth=0.018, arrowsize=0.05291667cm 2.0,arrowlength=0.8,arrowinset=0.2]{->}(2.2,0.15795296)(2.6,0.15795296)
\rput[bl](4.6,0.45795295){$T_1$}
\psline[linecolor=black, linewidth=0.018, arrowsize=0.05291667cm 2.0,arrowlength=0.8,arrowinset=0.2]{->}(4.0,0.55795294)(4.4,0.55795294)
\psline[linecolor=black, linewidth=0.018, arrowsize=0.05291667cm 2.0,arrowlength=0.8,arrowinset=0.2]{->}(5.1,0.65795296)(5.5,0.85795295)
\psline[linecolor=black, linewidth=0.018, arrowsize=0.05291667cm 2.0,arrowlength=0.8,arrowinset=0.2]{->}(5.1,0.45795295)(5.5,0.25795296)
\psline[linecolor=black, linewidth=0.018, arrowsize=0.05291667cm 2.0,arrowlength=0.8,arrowinset=0.2]{->}(4.0,-0.84204704)(4.4,-0.64204705)
\psline[linecolor=black, linewidth=0.018, arrowsize=0.05291667cm 2.0,arrowlength=0.8,arrowinset=0.2]{->}(4.0,-0.24204704)(4.4,-0.44204703)
\rput[bl](4.6,-0.7420471){$T_2$}
\psline[linecolor=black, linewidth=0.018, arrowsize=0.05291667cm 2.0,arrowlength=0.8,arrowinset=0.2]{->}(5.1,-0.542047)(5.5,-0.542047)
\rput[bl](7.5,-0.26204705){$T_1$}
\psline[linecolor=black, linewidth=0.018, arrowsize=0.05291667cm 2.0,arrowlength=0.8,arrowinset=0.2]{->}(6.9,-0.14204705)(7.3,-0.14204705)
\psbezier[linecolor=black, linewidth=0.018, arrowsize=0.05291667cm 2.0,arrowlength=0.8,arrowinset=0.2]{->}(8.0,-0.042047042)(8.3,-0.042047042)(8.4,-0.14204705)(8.5,-0.44204704284667967)
\psbezier[linecolor=black, linewidth=0.018, arrowsize=0.05291667cm 2.0,arrowlength=0.8,arrowinset=0.2]{->}(8.0,-0.24204704)(8.3,-0.24204704)(8.4,-0.14204705)(8.5,0.1579529571533203)
\rput[bl](5.7,-0.14204705){,}
\rput[bl](8.8,-0.19204704){.}
\rput[bl](2.8,-0.14204705){,}
\end{pspicture}
\end{figure}

\medskip

%By a {\em superalgebra} we mean an associative $\kk$-algebra $(A,m,\eta)$, where $m$ is the multiplication and $\eta:\kk\to A$ is the unit, in which $A$ has a super-vector space structure satisfying $m(A_i\ot A_j)\sb A_{i+j}$ for any $i,j$. If $A,B$ are superalgebras, then $A\ot B$ is a superalgebra with the product defined over homogeneous elements by\begin{align*}(a\ot b)(a'\ot b')\eq (-1)^{|b||a'|}aa'\ot bb'.\end{align*}If the arrows in the above definition are reversed, one gets the notion of a supercoalgebra $(C,\De,\e)$.

\subsection{Hopf superalgebras}\label{subs: Hopf superalgebras} A {\em Hopf superalgebra} is a super-vector space $H$ endowed with degree zero tensors 
\begin{figure}[H]
\centering
\begin{pspicture}(0,-0.31162995)(9.832368,0.31162995)
\psline[linecolor=black, linewidth=0.018, arrowsize=0.05291667cm 2.0,arrowlength=0.8,arrowinset=0.2]{->}(4.3058095,0.0)(4.705809,0.0)
\rput[bl](4.8658094,-0.1){$\Delta$}
\psline[linecolor=black, linewidth=0.018, arrowsize=0.05291667cm 2.0,arrowlength=0.8,arrowinset=0.2]{->}(5.3058095,0.1)(5.705809,0.3)
\psline[linecolor=black, linewidth=0.018, arrowsize=0.05291667cm 2.0,arrowlength=0.8,arrowinset=0.2]{->}(5.3058095,-0.1)(5.705809,-0.3)
\psline[linecolor=black, linewidth=0.018, arrowsize=0.05291667cm 2.0,arrowlength=0.8,arrowinset=0.2]{->}(0.005809326,-0.3)(0.4058093,-0.1)
\psline[linecolor=black, linewidth=0.018, arrowsize=0.05291667cm 2.0,arrowlength=0.8,arrowinset=0.2]{->}(0.005809326,0.3)(0.4058093,0.1)
\rput[bl](0.5658093,-0.05){$m$}
\psline[linecolor=black, linewidth=0.018, arrowsize=0.05291667cm 2.0,arrowlength=0.8,arrowinset=0.2]{->}(1.0058093,0.0)(1.4058093,0.0)
\rput[bl](2.4458094,-0.1){$1$}
\psline[linecolor=black, linewidth=0.018, arrowsize=0.05291667cm 2.0,arrowlength=0.8,arrowinset=0.2]{->}(2.8058093,0.0)(3.2058094,0.0)
\psline[linecolor=black, linewidth=0.018, arrowsize=0.05291667cm 2.0,arrowlength=0.8,arrowinset=0.2]{->}(6.8058095,0.0)(7.205809,0.0)
\rput[bl](7.4058094,-0.06){$\epsilon$}
\rput[bl](9.145809,-0.1){$S$}
\psline[linecolor=black, linewidth=0.018, arrowsize=0.05291667cm 2.0,arrowlength=0.8,arrowinset=0.2]{->}(9.50581,0.0)(9.905809,0.0)
\psline[linecolor=black, linewidth=0.018, arrowsize=0.05291667cm 2.0,arrowlength=0.8,arrowinset=0.2]{->}(8.605809,0.0)(9.00581,0.0)
\rput[bl](1.5258093,-0.1){,}
\rput[bl](3.3258092,-0.1){,}
\rput[bl](5.725809,-0.1){,}
\rput[bl](7.625809,-0.1){,}
\end{pspicture}
\end{figure}
\noindent where each arrow corresponds to $H$ (that is, $m:H\ot H\to H, 1\in H$, etc). These tensors satisfy the usual Hopf algebra axioms, except that the algebra property for the coproduct $\De$ involves the symmetry $\tau_{H,H}$ of super-vector spaces (denoted by a crossing pair of arrows):
\begin{figure}[H]
\centering
\begin{pspicture}(0,-0.42)(5.9058094,0.42)
\psline[linecolor=black, linewidth=0.018, arrowsize=0.05291667cm 2.0,arrowlength=0.8,arrowinset=0.2]{->}(0.005809326,-0.32)(0.4058093,-0.12)
\psline[linecolor=black, linewidth=0.018, arrowsize=0.05291667cm 2.0,arrowlength=0.8,arrowinset=0.2]{->}(0.005809326,0.28)(0.4058093,0.08)
\rput[bl](0.5558093,-0.1){$m$}
\psline[linecolor=black, linewidth=0.018, arrowsize=0.05291667cm 2.0,arrowlength=0.8,arrowinset=0.2]{->}(1.0058093,-0.02)(1.4058093,-0.02)
\rput[bl](1.5658094,-0.12){$\Delta$}
\psline[linecolor=black, linewidth=0.018, arrowsize=0.05291667cm 2.0,arrowlength=0.8,arrowinset=0.2]{->}(2.0058093,0.08)(2.4058094,0.28)
\psline[linecolor=black, linewidth=0.018, arrowsize=0.05291667cm 2.0,arrowlength=0.8,arrowinset=0.2]{->}(2.0058093,-0.12)(2.4058094,-0.32)
\rput[bl](2.6858094,-0.08){=}
\psline[linecolor=black, linewidth=0.018, arrowsize=0.05291667cm 2.0,arrowlength=0.8,arrowinset=0.2]{->}(3.2058094,0.28)(3.6058092,0.28)
\rput[bl](3.7658093,0.18){$\Delta$}
\psline[linecolor=black, linewidth=0.018, arrowsize=0.05291667cm 2.0,arrowlength=0.8,arrowinset=0.2]{->}(4.205809,0.28)(4.705809,0.28)
\rput[bl](4.855809,0.2){$m$}
\psline[linecolor=black, linewidth=0.018, arrowsize=0.05291667cm 2.0,arrowlength=0.8,arrowinset=0.2]{->}(5.3058095,0.28)(5.705809,0.28)
\psline[linecolor=black, linewidth=0.018, arrowsize=0.05291667cm 2.0,arrowlength=0.8,arrowinset=0.2]{->}(3.2058094,-0.32)(3.6058092,-0.32)
\rput[bl](3.7658093,-0.42){$\Delta$}
\psline[linecolor=black, linewidth=0.018, arrowsize=0.05291667cm 2.0,arrowlength=0.8,arrowinset=0.2]{->}(4.205809,-0.32)(4.705809,-0.32)
\rput[bl](4.855809,-0.4){$m$}
\psline[linecolor=black, linewidth=0.018, arrowsize=0.05291667cm 2.0,arrowlength=0.8,arrowinset=0.2]{->}(5.3058095,-0.32)(5.705809,-0.32)
\psline[linecolor=black, linewidth=0.018, arrowsize=0.05291667cm 2.0,arrowlength=0.8,arrowinset=0.2]{->}(4.205809,-0.22)(4.705809,0.18)
\psline[linecolor=black, linewidth=0.018, arrowsize=0.05291667cm 2.0,arrowlength=0.8,arrowinset=0.2]{->}(4.205809,0.18)(4.705809,-0.22)
\rput[bl](5.855809,-0.42){.}
\end{pspicture}
\end{figure}
\noindent Though we mostly use tensor network notation, in some places we also use Sweedler's notation for the coproduct, that is, we write $\De(h)=h_{(1)}\ot h_{(2)}$ (omitting the sum sign) for any $h\in H$. Any (ungraded) Hopf algebra can be seen as a Hopf superalgebra concentrated in degree zero. In what follows we reserve the term {\em Hopf algebra} exclusively for the ungraded case. If $H$ is a Hopf superalgebra, then we define $H^{op}=(H,m^{op},1,\De,\e,S^{-1})$ and $H^{cop}=(H,m,1,\De^{op},\e,S^{-1})$ where $m^{op}=m\circ\tau_{H,H}$ and $\De^{op}=\tau_{H,H}\circ\De$. The dual $H^*$ is also a Hopf superalgebra in the usual way. We denote by $\Aut(H)$ the group of Hopf superalgebra automorphisms of $H$.

%Sometimes we will use Sweedler's notation for the coproduct, that is, we write $\De(x)=x_{(1)}\ot x_{(2)}$, $(\De\ot \id_H)\De(x)=x_{(1)}\ot x_{(2)}\ot x_{(3)}$, etc.
%\medskip

% If $H$ is a Hopf superalgebra, we denote by $H^{op}$ the Hopf superalgebra with the same underlying super-coalgebra structure and with multiplication $m^{op}\eq m\circ \tau_{H,H}$. The antipode is $S^{op}\eq S^{-1}$. Similarly, we let $H^{cop}$ be the Hopf superalgebra with the same underlying superalgebra structure, but with coproduct $\De^{op}\eq \tau_{H,H}\circ \De$ and antipode $S^{cop}\eq S^{-1}$. If $H$ is a finite dimensional Hopf superalgebra, then the dual vector space $H^*$ becomes a Hopf superalgebra if we set $H^*_i=(H_i)^*$ for $i=0,1$ and $m_{H^*}\eq\De^*,\De_{H^*}\eq m^*$. This requires an identification $(H\ot H)^*\cong H^*\ot H^*$ which we take as \begin{align*}(f\ot g)(x\ot y)\eq f(x)\ot g(y).\end{align*}Note that there is no sign in the latter equation, this ensures that $\De_{H^*}$ is a superalgebra morphism.
\medskip

A Hopf superalgebra is said to be {\em involutory} if $S\circ S=\id_H$. It is {\em commutative} if $m=m^{op}$ and it is {\em cocommutative} if $\De=\De^{op}$. A commutative or cocommutative Hopf superalgebra is always involutory, see e.g. \cite[Corollary 7.1.11]{Radford:BOOK}. A Hopf superalgebra $H$ is $\N$-graded if it has a direct sum decomposition $H=\oplus_{n\in\N}H_n$ such that
\begin{align*}
m(H_i\ot H_j)&\sb H_{i+j}, & \De(H_n)&\sb \sum_{i+j=n}H_i\ot H_j, & S(H_n)\sb H_n,
\end{align*}
for each $i,j,n\in\N$ and such that the $\N$-degree refines the mod 2 degree, that is, $H_i=\oplus_{n\geq 0}H_{2n+i}$ for each $i=0,1 \pmod 2$. To distinguish from mod 2 degree, we set $|x|_0\eq n\in\N$ for nonzero $x\in H_n$.

%We now give two simple examples of (cocommutative) Hopf (super)algebras.

\def\Prim{\text{Prim}}

\begin{example}
Let $V$ be a finite dimensional vector space. The {\em exterior algebra} $\LaV$ on $V$ is the quotient of the tensor algebra $T(V)=\oplus_{n\geq 0}V^{\ot n}$ (where $V^{\ot 0}=\kk$) by the ideal generated by the elements of the form $v\ot w+w\ot v$ with $v,w\in V$. This becomes an $\N$-graded superalgebra by letting $V\sb \LaV$ be in degree one and it is a Hopf superalgebra if we set 
\begin{align*}
\De(v)&=1\ot v+v\ot 1, & \e(v)&=0, & S(v)=&-v
\end{align*}
for any $v\in V$ and extend $\De,\e$ (resp. $S$) by letting them be superalgebra homomorphisms (resp. antihomomorphism). This is a commutative, cocommutative (hence involutory) Hopf superalgebra. Since $\Prim(\La(V))=V$ (where $\Prim(H)=\{h\in H \ | \ \De(h)=h\ot 1+1\ot h\}$) it follows that the group of automorphisms of $\La(V)$ is isomorphic to $GL(V)$.
\end{example}

Given a subgroup $G\sb\Aut(H)$, the {\em semidirect-product} $\kk[G]\ltimes H$ is a Hopf superalgebra defined as follows. As a super-coalgebra, it is the tensor product $\kk[G]\ot H$, where $\kk[G]$ is the group-algebra of $G$ (this is a Hopf algebra with $\De(\a)=\a\ot\a, \e(\a)=1$ and $S(\a)=\a^{-1}$ for each $\a\in G$). Its product is defined by
\begin{align*}
(\a_1\ot h_1)\cdot (\a_2\ot h_2)\eq\a_1 \a_2\ot \a_2^{-1}(h_1)h_2,
\end{align*}
\noindent and the antipode by $S(\a_1\ot h_1)\eq\a_1^{-1}\ot \a_1^{-1}(S_H(h_1))$ for any $\a_1,\a_2\in G$ and $h_1,h_2\in H$. It has to be noted that for semisimple $H$ (i.e. semisimple as a $\kk$-algebra), $\Aut(H)$ is a finite group by \cite{Radford:automorphisms}, hence $\kk[\Aut(H)]\ltimes H$ is finite-dimensional. However, for non-semisimple $H$, $\kk[\Aut(H)]\ltimes H$ is infinite-dimensional in general (for instance if $H=\La(V)$).

%\begin{remark}A theorem of Cartier-Kostant-Milnor-Moore states that, if the base field $\kk$ has characteristic zero, a cocommutative Hopf superalgebra over $\kk$ is necessarily a semidirect product $\kk[G]\ltimes U(\gg)$ where $G$ is a group of automorphisms of a Lie superalgebra $\gg$. This is always infinite-dimensional, unless $\gg$ is a trivial Lie superalgebra and $G$ is finite. Thus, a finite dimensional cocommutative Hopf superalgebra over a characteristic zero field is a semidirect product of a finite group algebra with an exterior algebra.\end{remark}

\subsection{Integrals and cointegrals}\label{subs: integrals and cointegrals} Let $H=(H,m,1,\De,\e,S)$ be a finite dimensional Hopf superalgebra over a field $\kk$. A {\em right cointegral} in $H$ is an element $\coint_r\in H$ such that
\begin{figure}[H]
\centering
\begin{pspicture}(0,-0.3558149)(5.05,0.3558149)
\psline[linecolor=black, linewidth=0.018, arrowsize=0.05291667cm 2.0,arrowlength=0.8,arrowinset=0.2]{->}(0.5,-0.3441851)(0.9,-0.14418511)
\psline[linecolor=black, linewidth=0.018, arrowsize=0.05291667cm 2.0,arrowlength=0.8,arrowinset=0.2]{->}(0.5,0.2558149)(0.9,0.055814896)
\rput[bl](1.06,-0.09418511){$m$}
\psline[linecolor=black, linewidth=0.018, arrowsize=0.05291667cm 2.0,arrowlength=0.8,arrowinset=0.2]{->}(1.5,-0.044185106)(1.9,-0.044185106)
\rput[bl](2.2,-0.044185106){=}
\psline[linecolor=black, linewidth=0.018, arrowsize=0.05291667cm 2.0,arrowlength=0.8,arrowinset=0.2]{->}(2.8,-0.044185106)(3.2,-0.044185106)
\rput[bl](3.4,-0.104185104){$\epsilon$}
\psline[linecolor=black, linewidth=0.018, arrowsize=0.05291667cm 2.0,arrowlength=0.8,arrowinset=0.2]{->}(4.4,-0.044185106)(4.8,-0.044185106)
\rput[bl](5.0,-0.14418511){.}
\rput[bl](0.0,0.1558149){$\coint_r$}
\rput[bl](3.9,-0.14418511){$\coint_r$}
\end{pspicture}

\end{figure}
\noindent A left cointegral of $H$ is defined as a right cointegral of $H^{op}$. A {\em right integral} is an  element $\int_r\in H^*$ such that
\begin{figure}[H]
\centering
\begin{pspicture}(0,-0.34439287)(5.05,0.34439287)
\psline[linecolor=black, linewidth=0.018, arrowsize=0.05291667cm 2.0,arrowlength=0.8,arrowinset=0.2]{->}(0.0,-0.065607145)(0.4,-0.065607145)
\rput[bl](0.56,-0.16560715){$\Delta$}
\psline[linecolor=black, linewidth=0.018, arrowsize=0.05291667cm 2.0,arrowlength=0.8,arrowinset=0.2]{->}(1.0,0.034392852)(1.4,0.23439285)
\psline[linecolor=black, linewidth=0.018, arrowsize=0.05291667cm 2.0,arrowlength=0.8,arrowinset=0.2]{->}(1.0,-0.16560715)(1.4,-0.36560714)
\rput[bl](1.6,0.13439286){$\int_r$}
\rput[bl](2.2,-0.065607145){=}
\rput[bl](4.04,-0.16560715){$1$}
\psline[linecolor=black, linewidth=0.018, arrowsize=0.05291667cm 2.0,arrowlength=0.8,arrowinset=0.2]{->}(4.4,-0.065607145)(4.8,-0.065607145)
\psline[linecolor=black, linewidth=0.018, arrowsize=0.05291667cm 2.0,arrowlength=0.8,arrowinset=0.2]{->}(2.8,-0.065607145)(3.2,-0.065607145)
\rput[bl](5.0,-0.16560715){.}
\rput[bl](3.4,-0.16560715){$\int_r$}
\end{pspicture}
\end{figure}
\noindent Equivalently, a right integral over $H$ is a right cointegral of the dual Hopf superalgebra $H^*$.

\medskip

If $H$ is finite dimensional then there is a non-zero right cointegral and it is unique up to scalar \cite[Theorem 10.2.2]{Radford:BOOK}, the same holds for left or right integrals as well. 

\begin{remark}
\label{remark: unimodularity}
It is not always true that left (co)integrals are also right (co)integrals. If $H$ is an (ungraded) Hopf algebra, then one says that $H$ is {\em unimodular} if this is the case, but this definition is not completely appropriate in the super-case. Indeed, one can define unimodularity for finite tensor categories, e.g. the category of representations of a Hopf (super)algebra (see \cite[Definition 6.5.7]{EGNO:BOOK}). Then one shows that for Hopf algebras, unimodularity of $\Rep(H)$ is equivalent to the cointegral being two-sided. Nevertheless, for Hopf superalgebras, unimodularity of its category of representations is equivalent to the cointegral being two-sided and of mod 2 degree zero. For instance, an exterior algebra over an odd dimensional vector space is non-unimodular, even though cointegrals are two-sided.
\end{remark}

Now let $\coint_r$ be a non-zero right cointegral in $H$ and let $\a\in\Aut(H)$. Clearly, $\a(\coint_r)$ is a right cointegral so by uniqueness, there is a scalar $\la\in\kk^{\t}$ (depending on $\a$) such that $\a(\coint_r)=\la \coint_r.$ Similarly, if $\int_r$ is a right integral on $H$, uniqueness of integrals implies that $\int_r\circ\a=\la'\int_r$ for some $\la'\in\kk^{\t}$. Since $\int_r(\coint_r)\neq 0$ \cite[Theorem 10.2.2, b)]{Radford:BOOK} it follows that $\la'=\la$.

\begin{definition}
\label{def: rH}
For any $\a\in\Aut(H)$, we denote $\lH(\a)\eq\la$, where $\la$ is the scalar defined above. This is a group homomorphism $\lH:\Aut(H)\to \kk^{\t}$. 
\end{definition}

Note that a Hopf algebra is semisimple if and only if $\e(\coint_r)\neq 0$ by \cite[Theorem 10.3.2]{Radford:BOOK} and so $\lH\equiv 1$ in this case. Hence, the homomorphism $\lH$ is relevant only for non-semisimple $H$.

%\begin{remark}\label{remark: rH and unimodularity}The homomorphism $\lH$ can be thought as the comodulus of the Hopf $G$-algebra $\uH$ associated to $\kk[G]\ltimes H$ with $G=\Aut(H)$, see Example \ref{example: Hopf group-algebra from semidirect product}. Therefore, if $\lH\neq 1$, the $G$-cointegral of $\uH$ is not two-sided even if that of $H$ is two-sided. This happens already for the exterior algebras below.\end{remark}

\begin{example}
\label{example: rH of exterior algebra}
Let $\LaV$ be the exterior algebra on a finite dimensional vector space $V$, recall that $\Aut(\La(V))=GL(V)$. If $X_1,\dots,X_n$ is a basis of $V$, then the (two-sided) cointegral of $\LaV$ is the product
$\coint=X_1\dots X_n$ so that $$\a(\coint)=\a(X_1)\dots \a(X_n)=\det(\a)X_1\dots X_n=\det(\a)\coint$$
for any $\a\in GL(V)$. Thus, $r_{\LaV}:GL(V)\to \kk^{\t}$ is just the determinant.
\end{example}

\def\HR{H_R}

Now suppose $H$ is $\N$-graded and $R$ is a commutative $\kk$-algebra with unit (we do not suppose $R$ is a domain). Consider the Hopf superalgebra $\HR\eq H\ot_{\kk}R$ over $R$. Given an element $\a\in \Aut(H)$ and $f\in R$ we define a $R$-linear Hopf endomorphism $\a\ot f$ of $\HR$ by 
\begin{align}
\label{eq: alpha tensor f}
\a\ot f(h\ot f')\eq \a(h)\ot (f^{|h|_0}\cdot f')
\end{align}
where $h\in H$ is homogeneous and $f'\in R$. This is an automorphism of $\HR$ if $f\in R^{\t}$. Now let $\coint, \int$ be respectively a right cointegral  and integral for $H$. Then $\coint_R\eq\coint\ot 1$ and $\int_R\eq \int\ot \id_R$ are respectively a cointegral and integral for $\HR$ respectively. If $\a\ot f$ is defined as above one has
\begin{align*}
\a\ot f(\coint\ot 1)&=\a(\coint)\ot f^{|\coint|_0}\\
&= \lH(\a)\coint\ot f^{|\coint|_0}\\
&=\lH(\a)f^{|\coint|_0} (\coint\ot 1).
\end{align*}
This implies that the homomorphism $r_{\HR}:\Aut(\HR)\to R^{\t}$ satisfies
\begin{equation}
\label{lemma: lHM homomorphism}
r_{\HR}(\a\ot f)\eq \lH(\a)\cdot f^{|\coint|_0}.
\end{equation}

%\begin{proposition}\label{prop: properties of two-sided coint int}Let $H$ be an unimodular Hopf superalgebra and $\coint$ be its two-sided cointegral. Then the following holds:\begin{enumerate}\item $S(\coint)=(-1)^{|\coint|} \coint$,\item $\De(\coint)=\De^{op}(\coint)$,\item $\int_r\circ \a=\lH(\a)\int_r$ for any $\a\in\Aut(H)$, where $\int_r$ is the right integral of $H$ and $\lH$ is the homomorphism $\Aut(H)\to\kk^{\t}$ defined above.\end{enumerate}\begin{proof}The first two properties are standard, cf. \cite[Chapter 10]{Radford:BOOK}. The third one follows from uniqueness of integrals: if $\int_r$ is a right integral, then $\int_r\circ\a=\lH'(\a)\int_r$ for some scalar $\lH'(\a)\in\kk^{\t}$. Since $\int_r(\coint_r)\neq 0$ it follows that $\lH'(\a)=\lH(\a)$ for any $\a$.\end{proof}\end{proposition}

%\begin{remark}\label{remark: rH is trivial for semisimple and AutH is finite}\end{remark}

\subsection{Hopf $G$-algebras}\label{subs: Hopf G-algebras} 

\def\ucoint{\underline{\coint}}
\def\bg{\boldsymbol{g}}

Let $G$ be a group. A {\em Hopf $G$-algebra} is a family of coalgebras $\uH=\{(\Ha,\Dea,\ea)\}_{\a\in\pia}$ indexed by $G$ endowed with coalgebra morphisms $$\maot:\HaoHat\to\Haot$$
for each $\a_1,\a_2\in G$, a unit $1\in H_{1_G}$ and maps $\Sa:\Ha\to \Ham$ for each $\a\in G$ satisfying graded versions of the associativity, unitality and antipode axioms (see \cite{Virelizier:Hopfgroup} or \cite{Turaev:homotopy, Turaev:BOOK-HQFT} for more details in the dual setting). We employ similar tensor network notation for Hopf $G$-algebras, the $G$-labels only appearing as subscripts of the structure maps. Note that $H_{1_G}$ is a Hopf algebra in the usual sense. We say that $\uH$ is {\em involutory} if $\Sam\Sa=\id_{\Ha}$ for each $\a\in G$. We say that $\uH$ is of {\em finite type} if each $\Ha$ is finite dimensional. These notions extend in an obvious way to super-vector spaces.
\medskip

%The notion of {\em Hopf $G$-coalgebra} was introduced by Turaev in the context of homotopy field theory \cite{Turaev:homotopy}. A Hopf $G$-coalgebra consists of a family of algebras $\{\Ha\}$ indexed by $\a\in G$, where $G$ is a group, together with coproducts  $$\De_{\a_1,\a_2}:\Haot\to \Ha\ot\Hat$$ for each $\a_1,\a_2\in G$, an algebra morphism $\e:H_{1_G}\to \kk$ and linear maps $\Sa:\Ha\to\Ham$ satisfying obvious $G$-graded versions of coassociativity, counitality and antipode properties, see \cite{Virelizier:Hopfgroup}. These were used in \cite{Turaev:homotopy} to construct invariants of links endowed with representations of the fundamental group of the link complement into the grading group $G$. An analog theorem was proved in \cite{Virelizier:flat} for Kuperberg invariants. We choose to work with the dual notion instead, since Proposition \ref{example: Hopf semidirect product} below gives a good source of examples.\end{remark}

Let $\uHH$ be a finite type Hopf $G$-algebra. Since the dual of a Hopf $G$-algebra is a Hopf $G$-coalgebra, the existence and uniqueness theorems of integrals of \cite{Virelizier:Hopfgroup} have analogous statements in the $G$-algebra case. Thus there exists a unique {\em right cointegral} of $\uH$ (up to scalar), that is, a family $\ucoint=\{\ca\}_{\a\in\pia}$, where $\ca\in\Ha$ for each $\a\in G$, satisfying 
\begin{figure}[H]
\centering
\begin{pspicture}(0,-0.42081496)(6.8,0.42081496)
\psline[linecolor=black, linewidth=0.018, arrowsize=0.05291667cm 2.0,arrowlength=0.8,arrowinset=0.2]{->}(0.55,-0.40918502)(0.95,-0.20918503)
\psline[linecolor=black, linewidth=0.018, arrowsize=0.05291667cm 2.0,arrowlength=0.8,arrowinset=0.2]{->}(0.55,0.19081497)(0.95,-0.009185028)
\rput[bl](1.11,-0.23918504){$m_{\alpha_1\alpha_2}$}
\psline[linecolor=black, linewidth=0.018, arrowsize=0.05291667cm 2.0,arrowlength=0.8,arrowinset=0.2]{->}(2.25,-0.109185025)(2.65,-0.109185025)
\rput[bl](-0.07,0.19081497){$\coint_{\alpha_1}$}
\rput[bl](3.0,-0.109185025){=}
\psline[linecolor=black, linewidth=0.018, arrowsize=0.05291667cm 2.0,arrowlength=0.8,arrowinset=0.2]{->}(3.65,-0.109185025)(4.05,-0.109185025)
\psline[linecolor=black, linewidth=0.018, arrowsize=0.05291667cm 2.0,arrowlength=0.8,arrowinset=0.2]{->}(6.15,-0.109185025)(6.55,-0.109185025)
\rput[bl](4.2,-0.20918503){$\epsilon_{\alpha_2}$}
\rput[bl](5.2,-0.20918503){$\coint_{\alpha_1\alpha_2}$}
\rput[bl](6.75,-0.109185025){.}
\end{pspicture}

\end{figure}

\noindent Moreover, there exists a unique family $\bg^*=\{\gda\}_{\a\in G}$ where $\gda\in\Ha^*$ satisfying
\begin{figure}[H]
\centering
\begin{pspicture}(0,-0.40581498)(6.8,0.40581498)
\psline[linecolor=black, linewidth=0.018, arrowsize=0.05291667cm 2.0,arrowlength=0.8,arrowinset=0.2]{->}(0.55,-0.20581497)(0.95,-0.005814972)
\psline[linecolor=black, linewidth=0.018, arrowsize=0.05291667cm 2.0,arrowlength=0.8,arrowinset=0.2]{->}(0.55,0.39418504)(0.95,0.19418503)
\rput[bl](1.11,-0.03581497){$m_{\alpha_1\alpha_2}$}
\psline[linecolor=black, linewidth=0.018, arrowsize=0.05291667cm 2.0,arrowlength=0.8,arrowinset=0.2]{->}(2.25,0.094185024)(2.65,0.094185024)
\rput[bl](-0.03,-0.40581498){$\coint_{\alpha_2}$}
\rput[bl](3.0,0.094185024){=}
\psline[linecolor=black, linewidth=0.018, arrowsize=0.05291667cm 2.0,arrowlength=0.8,arrowinset=0.2]{->}(3.65,0.094185024)(4.05,0.094185024)
\psline[linecolor=black, linewidth=0.018, arrowsize=0.05291667cm 2.0,arrowlength=0.8,arrowinset=0.2]{->}(6.15,0.094185024)(6.55,0.094185024)
\rput[bl](4.2,-0.055814972){$g^*_{\alpha_1}$}
\rput[bl](5.2,-0.005814972){$\coint_{\alpha_1\alpha_2}$}
%\rput[bl](6.75,0.094185024){.}
\end{pspicture}

\end{figure} 
 
\noindent and $g^*_{\a_1\a_2}\circ m_{\a_1\a_2}=g^*_{\a_1}\ot g^*_{\a_2}$ for each $\a_1,\a_2\in G$. We call $\bg^*$ the {\em comodulus} of $\uH$. 

\medskip

\begin{example}
\label{example: Hopf group-algebra from semidirect product}
Let $H$ be a finite dimensional Hopf (super)algebra and let $G=\Aut(H)$. Consider the semidirect product Hopf (super)algebra $\kk[G]\ltimes H$. Let $$\Ha\eq \{h\cdot\a \ | \ h\in H\}\sb \kk[G]\ltimes H$$
for each $\a\in G$. Then $\uH\eq\{\Ha\}_{\a\in G}$ is a Hopf $G$-(super)algebra with the structure morphisms induced from the semidirect product. A right cointegral is given by $\ca\eq \coint\cdot\a\in\Ha$ where $\coint$ is a right cointegral of $H$. The comodulus of $\uH$ is given by $$\gda(h\cdot\a)\eq\lH(\a)g^*(h)$$
for $h\in H, \a\in G$, where $g^*\in H^*$ is the comodulus of $H$ and $\lH$ is the homomorphism of Definition \ref{def: rH}. This example is dual to the Hopf group-coalgebra of \cite[Subsection 1.2]{Turaev:BOOK-HQFT}.
\end{example}

\section{Sutured manifolds and Reidemeister torsion}
\label{section: sutured manifolds}

In this section we recall a few concepts from the theory of sutured manifolds \cite{Gabai:foliations, Juhasz:holomorphic, JTZ:naturality} and twisted Reidemeister torsion \cite{Turaev:BOOK1, FV:survey} as well as some considerations from our previous work \cite{LN:Kup}. In what follows, all 3-manifolds will be assumed to be compact and oriented. Homology will always be taken with integral coefficients, unless specified otherwise.

\subsection[Sutured manifolds]{Sutured manifolds}
\label{subs: sutured manifolds} A {\em sutured manifold} is a (compact, oriented) 3-manifold-with-boundary $M$ endowed with a collection $\c$ of pairwise disjoint annuli in $\p M$ subject to the following properties:
\begin{enumerate}
\item Each annuli is the closed tubular neighborhood of an oriented simple closed curve called a {\em suture}. The collection of sutures is denoted by $s(\c)$.
\item The surface $R\eq\p M\sm \inte(\c)$ is oriented and each (oriented) component of $\p R$ is oriented-parallel to some suture.
\end{enumerate}
We denote by $R_+(\c)$ (resp. $R_-(\c)$) the union of the components of $R$ whose orientation coincide (resp. is opposite) with the induced orientation on $\p M$. We say that $(M,\c)$ is {\em balanced} if $M$ has no closed components, each component of $\p M$ has at least one suture and $\chi(R_-(\c))=\chi(R_+(\c))$.

\begin{example}
\label{example: sutured from closed}
Let $Y$ be a connected closed oriented 3-manifold. Let $B^3\sb Y$ be an embedded open 3-ball. Then $M_0=Y\sm B^3$ is a balanced sutured 3-manifold if we let $\c_0$ be a single annulus in $\p M_0$. The subsurfaces $R_{\pm}(\c_0)$ are both disks in this case.
\end{example}

\begin{example} 
\label{example: sutured from link complement}
Let $L$ be a link in a closed oriented 3-manifold $Y$. Let $N(L)$ be an open tubular neighborhood of $L$ in $Y$. Then $M_L=Y\sm N(L)$ is a balanced sutured 3-manifold if we let $\c=\c_L$ consists of a pair of oppositely oriented meridians, one pair for each component of $\p M_L$. For each component $T\sb \p M_L$, $T\sm (T\cap \c_L)$ consists of two annuli, one in $R_-(\c_L)$ and the other in $R_+(\c_L)$.
\end{example}

\subsection[Extended Heegaard diagrams]{Extended Heegaard diagrams} Let $(M,\c)$ be a sutured 3-manifold. An {\em (embedded) sutured Heegaard diagram} is a tuple $\HH=(\S,\aa,\bb)$ consisting of the following data:
\begin{enumerate}
\item A compact oriented surface-with-boundary $\S$ embedded in $M$ with $\p\S=s(\c)$ as oriented 1-manifolds.
\item A set $\aa$ (resp. $\bb$) of pairwise disjoint simple closed curves in $\S$ bounding disks to the negative (resp. positive) side of $\S$.
\end{enumerate}
We will usually identify the set $\aa$ with the 1-submanifold $\cup_{\a\in\aa}\a\sb\S$, and similarly for $\bb$. We assume that $\aa$ and $\bb$ are transverse submanifolds of $\S$. We require that the surface $\S[\aa]$(resp. $\S[\bb]$) obtained by compressing $\S$ along the disks corresponding to $\aa$ (resp. $\bb$) results in a surface isotopic to $R_-(\c)$ (resp. $R_+(\c)$) relative to $\c$. In other words, $\S$ splits $M$ into two handlebodies $\Ua$ and $\Ub$, where $\Ua$ is obtained by attaching one handles to $R_-\t [-1,1]$ along $R_-\t \{1\}$ with belt circles the curves in $\aa$ while $\Ub$ is obtained by attaching one handles to $R_+\t [-1,1]$ along $R_+\t \{-1\}$ with belt circles the curves in $\bb$. We say that $\HH$ is {\em balanced} if $|\aa|=|\bb|$ and every component of $\S\sm \aa$ and $\S\sm\bb$ contains a component of $\p\S$.  From now on, we will refer to (balanced) embedded sutured Heegaard diagrams only as (balanced) Heegaard diagrams.
\medskip

An {\em extended Heegaard diagram} of $(M,\c)$ is a Heegaard diagram $\HD$ endowed with a set $\bolda$ of pairwise disjoint properly embedded arcs in $\S\sm\aa$ that forms a cut system of the surface $\S[\aa]$, that is, for any component $R'$ of $\S[\aa]$, $R'\sm R'\cap N(\bolda)$ is homeomorphic to a disk, where $N(\bolda)$ is an open tubular neighborhood of $\bolda$ in $\S[\aa]$. We denote $\aaa=\aa\cup\bolda$ and if $|\aa|=d,|\bolda|=l$, then we will denote $\aa=\{\a_1,\dots,\a_d\}$ and $\bolda=\{\a_{d+1},\dots,\a_{d+l}\}$.
\medskip

As is well-known, any sutured manifold $(M,\c)$ has a Heegaard diagram and $(M,\c)$ is balanced if and only if any (and hence all) of its Heegaard diagrams is balanced \cite{Juhasz:holomorphic}. The Reidemeister-Singer theorem extends to sutured manifolds: any two Heegaard diagrams of a given sutured manifold are related by Heegaard moves \cite{Juhasz:holomorphic}, see also \cite[Proposition 2.36]{JTZ:naturality} for a precise version in the embedded case. A Heegaard diagram can always be extended as above and any two extended Heegaard diagrams of $(M,\c)$ are related by {\em extended Heegaard moves} \cite{LN:Kup}: usual Heegaard moves of $\HD$ with the condition that the $\a$'s are isotoped or handleslided in the complement of the arcs in $\bolda$, isotopies of arcs, and sliding an arc over a curve in $\aa$ or over another arc. Note that the definition of extended Heegaard diagram of \cite{LN:Kup} includes also a cut system of $\S[\bb]$, we do not require this in the present paper.

\subsection{A presentation of $\pi_1$}
\label{subs: HDs extended}

\medskip
\def\arc{a}

Suppose now that $R_-(\c)$ is connected. We describe a presentation of $\pi_1(M)$ out of an extended Heegaard diagram $\HH=\HDD$ of $(M,\c)$ (see also \cite[Section 4.1]{FJR11}). We assume $M$ has a basepoint $p\in s(\c)$ (note that this is fixed when doing Heegaard moves since our diagrams have fixed boundary $\p\S=s(\c)$). Let $\aaa=\aa\cup\bolda$ where $\aa=\{\a_1,\dots,\a_d\}$ are the closed curves and $\bolda=\{\a_{d+1},\dots,\a_{d+l}\}$ are the arcs. Write $M=\Ua\cup\Ub$ where $\Ua,\Ub$ are as above and think of $\Ua$ as constructed from $R_-\t [-1,1]$ by attaching 3-dimensional 1-handles with belt circles the closed $\a$ curves. The cocores of these 1-handles are disks $D_1,\dots,D_d$ with $\p D_i=\a_i$ for each $i=1,\dots,d$. We also have disks $D_{i}=\a_{i}\t [-1,1]\sb R_-\t [-1,1]$ for each $i=d+1,\dots,d+l$. Since we supposed $R_-(\c)$ is connected, the complement of a sufficiently small neighborhood of $D_1\cup\dots\cup D_{d+l}$ in $\Ua$ is homeomorphic to a single 3-ball. This implies that for each $i=1,\dots,d+l$ there is a unique unoriented loop $\a^*_i$ based at $p$ contained in $\Ua$ that intersects $D_i$ in a single point and is disjoint from $D_j$ for $j\neq i$. If each curve and arc in $\aaa$ is oriented, then we orient $\a^*_i$ so that $\a_i\cdot\a^*_i=+1$ in $\S$ (when $\a^*_i$ is homotoped to lie in $\S$), see Figure \ref{fig: dual curves in pi1} (right). From now on, we consider $\a^*_i$ as an element of $\pi_1(M,p)$.

\begin{figure}[t]
\centering
\begin{pspicture}(0,-2.4892063)(12.407761,2.4892063)
\definecolor{colour1}{rgb}{0.0,0.8,0.2}
\psbezier[linecolor=blue, linewidth=0.04](9.97776,0.7871429)(9.97776,-0.01285712)(11.077761,-0.01285712)(11.077761,0.7871428801521517)
\psbezier[linecolor=blue, linewidth=0.04](9.97776,0.7871429)(9.97776,1.5871428)(11.077761,1.5871428)(11.077761,0.7871428801521517)
\psline[linecolor=red, linewidth=0.04, arrowsize=0.06cm 3.0,arrowlength=1.4,arrowinset=0.0]{<-}(2.2777605,0.8871429)(2.2777605,0.5871429)
\psline[linecolor=red, linewidth=0.04](2.2777605,2.4871428)(2.2777605,-2.112857)
\psbezier[linecolor=colour1, linewidth=0.04](1.6259203,0.28260043)(2.3227432,0.28260043)(2.3227432,0.050326124)(3.019566,0.050326123382021706)
\pscircle[linecolor=black, linewidth=0.04, dimen=outer](1.6259203,-0.4142225){0.34841147}
\psline[linecolor=colour1, linewidth=0.04](0.4777606,0.18714288)(0.02776062,0.18714288)
\rput[bl](2.1453185,-2.4892063){$\alpha$}
\rput[bl](4.831306,-0.0038712155){$\beta$}
\rput[bl](4.8777604,-0.6302757){$\arc$}
\psline[linecolor=red, linewidth=0.04](0.02776062,-0.41285712)(1.2777606,-0.41285712)
\psline[linecolor=red, linewidth=0.04, arrowsize=0.05291667cm 2.0,arrowlength=1.4,arrowinset=0.0]{<-}(0.17776062,-0.41285712)(0.4777606,-0.41285712)
\psline[linecolor=colour1, linewidth=0.04, arrowsize=0.05291667cm 2.0,arrowlength=1.4,arrowinset=0.0]{->}(0.37776062,0.18714288)(0.17776062,0.18714288)
\pscircle[linecolor=black, linewidth=0.04, dimen=outer](3.019566,0.74714905){0.34841147}
\psbezier[linecolor=red, linewidth=0.04](4.5977607,-0.51285714)(3.4777606,-0.51285714)(4.477761,0.7871429)(3.3677607,0.7871428801521517)
\psbezier[linecolor=colour1, linewidth=0.04](0.4777606,0.18714288)(1.6391321,0.18714288)(1.8869634,1.4881016)(3.0192654,1.4444972438424377)
\psbezier[linecolor=colour1, linewidth=0.04](1.6259203,0.28260043)(1.1613717,0.28260043)(0.92909735,0.050326124)(0.92909735,-0.4142224954434255)(0.92909735,-0.8787711)(1.1613717,-1.1110455)(1.6259203,-1.1110455)
\psbezier[linecolor=colour1, linewidth=0.04](3.019566,1.443972)(3.4841146,1.443972)(3.7163892,1.2116977)(3.7163892,0.7471490516200617)(3.7163892,0.28260043)(3.4841146,0.050326124)(3.019566,0.050326124)
\psbezier[linecolor=colour1, linewidth=0.04](4.6454864,0.16646327)(3.213212,0.14578368)(3.4777606,-1.1128571)(1.5484955,-1.111045423681586)
\psdots[linecolor=black, dotsize=0.16](4.3277607,0.13714288)
\psframe[linecolor=black, linewidth=0.04, dimen=outer](4.6454864,2.4892063)(0.0,-2.1562798)
\psline[linecolor=red, linewidth=0.04, arrowsize=0.06cm 3.0,arrowlength=1.4,arrowinset=0.0]{<-}(9.577761,0.8871429)(9.577761,0.5871429)
\psline[linecolor=red, linewidth=0.04](9.577761,2.4871428)(9.577761,-2.112857)
\rput[bl](9.445318,-2.4892063){$\alpha$}
\rput[bl](12.177761,-0.6302757){$\arc$}
\psline[linecolor=red, linewidth=0.04](7.3277607,-0.41285712)(8.577761,-0.41285712)
\psline[linecolor=red, linewidth=0.04, arrowsize=0.05291667cm 2.0,arrowlength=1.4,arrowinset=0.0]{<-}(7.477761,-0.41285712)(7.7777605,-0.41285712)
\psbezier[linecolor=red, linewidth=0.04](11.89776,-0.51285714)(10.7777605,-0.51285714)(11.7777605,0.7871429)(10.667761,0.7871428801521517)
\psdots[linecolor=black, dotsize=0.14](2.6777606,0.7871429)
\psbezier[linecolor=blue, linewidth=0.04](9.97776,0.73714286)(9.97776,-0.6128571)(9.97776,-1.3128572)(9.077761,-1.3128571198478483)
\psbezier[linecolor=blue, linewidth=0.04](9.97776,0.73714286)(9.97776,-0.6128571)(9.97776,-1.3128572)(10.877761,-1.3128571198478483)
\psline[linecolor=blue, linewidth=0.04](9.077761,-1.3128572)(7.3377604,-1.3128572)
\psline[linecolor=blue, linewidth=0.04](10.877761,-1.3128572)(11.927761,-1.3128572)
\psline[linecolor=blue, linewidth=0.04, arrowsize=0.05291667cm 2.0,arrowlength=1.4,arrowinset=0.0]{->}(10.577761,0.18714288)(10.377761,0.18714288)
\psline[linecolor=blue, linewidth=0.04, arrowsize=0.05291667cm 2.0,arrowlength=1.4,arrowinset=0.0]{->}(7.6777606,-1.3128572)(7.477761,-1.3128572)
\rput[bl](10.47776,1.5871428){$\arc^*$}
\rput[bl](12.077761,-1.4128572){$\alpha^*$}
\rput[bl](2.4777606,0.8871429){$p$}
\rput[bl](9.7777605,0.98714286){$p$}
\psdots[linecolor=black, dotsize=0.14](9.97776,0.7871429)
\pscircle[linecolor=black, linewidth=0.04, dimen=outer](10.319566,0.74714905){0.34841147}
\psframe[linecolor=black, linewidth=0.04, dimen=outer](11.945486,2.4892063)(7.3,-2.1562798)
\pscircle[linecolor=black, linewidth=0.04, dimen=outer](8.9259205,-0.4142225){0.34841147}
\end{pspicture}
\caption{Left: an oriented, $\bb$-based, extended Heegaard diagram of the (sutured) complement of the left trefoil knot  $K\sb S^3$. The square is assumed to be oriented towards the reader and opposite sides have to be identified. Right: the corresponding dual curves $\a^*, a^*\in\pi_1(S^3\sm K,p)$ (drawn over $\S$ in blue).}
\label{fig: dual curves in pi1}
\end{figure}

\medskip

Suppose $\HH$ is {\em oriented}, that is, all the curves and arcs are oriented, and {\em $\bb$-based}, meaning that each curve in $\bb$ has a basepoint in $\S\sm\aaa$. For each $\b\in\bb$ let $\ov{\b}$ be the word in the generators $\a^*_1,\dots,\a^*_{d+l}$ defined as follows: for each crossing $x$ of $\b$ write $(\a^*_x)^{m_x}$ where $\a_x \in\aaa$ is the curve or arc passing through that point. Then multiply these generators from left to right as we follow the orientation of $\b$ starting from its basepoint. Then the elements $\a^* ( \a\in\aaa)$ and the words $\ov{\b} (\b\in\bb)$ define a presentation of the fundamental group of $M$:
\begin{align}
\label{eq: pres of pi1}
\pi_1(M,p)=\lb \a^*_1,\dots,\a^*_{d+l} \ | \ \ov{\b}_1,\dots, \ov{\b}_d\rb.
\end{align}

\noindent For instance, the diagram of Figure \ref{fig: dual curves in pi1} (left) determines the presentation $
\pi_1(S^3\sm K,p)=\lb \a, a \ | \ a\a a^{-1}\a^{-1}a^{-1}\a\rb$ where we denote $\a^*,a^*$ just by $\a,a$ for simplicity.
\medskip

\subsection{$\Spinc$ structures and multipoints}
\label{subs: Spinc} Let $(M,\c)$ be a balanced sutured 3-manifold. Fix a vector field $v_0$ on $\p M$ that points into $M$ (resp. out of $M$) along $R_-$ (resp. $R_+$) and along $\c$ it is equivalent to the gradient of some height function $\c=s(\c)\t [-1,1]\to [-1,1]$. Let $v,w$ be two nowhere-vanishing vector fields on $M$ such that $v|_{\p M}=v_0=w|_{\p M}$. We say that $v$ and $w$ are {\em homologous} if there is an embedded open 3-ball $B\sb \inte(M)$ such that $v,w$ are homotopic rel $\p M$ in $M\sm B$ through nowhere-vanishing vector fields. A {\em $\Spinc$}-structure is an homology class of such vector fields. The set of $\Spinc$ structures on $M$ is denoted by $\Spinc(M,\c)$. The obstruction to homotope a $\Spinc$ structure $\ss_1$ onto another $\ss_2$ lies in $H^2(M,\p M)$, we denote it by $\ss_1-\ss_2\in H^2(M,\p M)$. Hence $\Spinc(M,\c)$ is an affine space over $H^2(M,\p M)$, see \cite{Juhasz:holomorphic} for more details. 
\medskip

Now let $\HD$ be a balanced Heegaard diagram of $(M,\c)$ and let $d=|\aa|=|\bb|$. Then $\Spinc$ structures on $(M,\c)$ can be encoded in a very simple way: there is a map
\begin{align*}
s:\Tab\to \Spinc(M,\c)
\end{align*}
where $\Tab$ is the set of {\em multipoints} of $\HD$, i.e. unordered $d$-tuples $\x=\{x_1,\dots,x_d\}$ with $x_i\in\a_{\s(i)}\cap\b_{i}$ for each $i$ where $\s$ is some permutation in $S_d$. We refer the reader to \cite{Juhasz:holomorphic} for the definition of this map (which is a sutured extension of the map of \cite{OS1} in the closed case). The only thing we will need from this map is that 
\begin{align}
\label{eq: Spinc and exy}
s(\y)-s(\x)=PD[\e(\y,\x)]
\end{align}
where $PD$ denotes Poincar\'e duality and $\e(\y,\x)$ is an homology class which is easily described through an extended Heegaard diagram of $(M,\c)$ (assuming $R_-(\c)$ connected). To do this, we put basepoints on the $\b$ curves of our diagram as in \cite{LN:Kup}.

\begin{definition}
\label{def: basepoints from multipoint}
Let $\HH$ be an oriented Heegaard diagram and let $\x\in\Tab$ be a multipoint in $\HH$, say $\x=\{x_1,\dots, x_d\}$ where $x_i\in\a_{\s(i)}\cap\b_i$ for each $i=1,\dots,d$ and some $\s\in S_d$. For each $i$, we let $q_i=q_i(\x)\in \b_i$ be a basepoint defined as follows: if the crossing $x_i$ is positive (resp. negative), then $q_i$ lies just before $x_i$ (resp. after $x_i$) when following the orientation of $\b_i$, see Figure \ref{fig: basepoint convention}. %If $\HH$ is based with these basepoints on $\bb$ and arbitrary basepoints on $\aa$, we denote the tensor $\ZHrho(\HH)$ (resp. $\ZHrho(\HH,\o)$) by $\ZHrho(\HH,\x)$ (resp. $\ZHrho(\HH,\x,\o)$).

\end{definition}

%Our invariant will only depend on $\x$ up to the $\Spinc$ structure $s_z(\x)$ it defines (note that this is why we took our Heegaard diagram to be based). 

\begin{figure}[h]
\centering
\begin{pspicture}(0,-1.4334792)(6.94,1.52)
\definecolor{colour0}{rgb}{0.0,0.8,0.2}
\psline[linecolor=colour0, linewidth=0.04, arrowsize=0.05291667cm 2.0,arrowlength=1.4,arrowinset=0.0]{<-}(1.2,1.5265208)(1.2,-0.8734791)
\psline[linecolor=red, linewidth=0.04, arrowsize=0.05291667cm 2.0,arrowlength=1.4,arrowinset=0.0]{->}(0.0,0.32652083)(2.4,0.32652083)
\psdots[linecolor=black, fillstyle=solid, dotstyle=o, dotsize=0.2, fillcolor=white](1.2,0.32652083)
%\psdots[linecolor=black, dotsize=0.2](0.8,0.32652083)
\rput[bl](1.08,-1.4334792){$\beta_i$}
\psdots[linecolor=black, dotsize=0.2](1.2,-0.07347915)
\rput[bl](1.6,-0.29347914){$q_i$}
%\rput[bl](0.46,0.64652085){$p_i$}
\rput[bl](2.64,0.22652084){$\alpha_{\s(i)}$}
\psline[linecolor=colour0, linewidth=0.04, arrowsize=0.05291667cm 2.0,arrowlength=1.4,arrowinset=0.0]{<-}(5.2,1.5265208)(5.2,-0.8734791)
\psline[linecolor=red, linewidth=0.04, arrowsize=0.05291667cm 2.0,arrowlength=1.4,arrowinset=0.0]{<-}(4.0,0.32652083)(6.4,0.32652083)
\psdots[linecolor=black, dotstyle=o, dotsize=0.2, fillcolor=white](5.2,0.32652083)
%\psdots[linecolor=black, dotsize=0.2](4.8,0.32652083)
\rput[bl](5.06,-1.4334792){$\beta_i$}
\rput[bl](5.64,0.6265209){$q_i$}
%\rput[bl](4.54,-0.23347916){$p_i$}
\rput[bl](6.64,0.22652084){$\alpha_{\s(i)}$}
\psdots[linecolor=black, dotsize=0.2](5.2,0.72652084)
\end{pspicture}

\caption{Basepoints on $\bb$ coming from $\x\in\Tab$. The surface is oriented towards the reader, so that the left crossing is positive. The white dot represents $x_i\in\a_{\s(i)}\cap\b_i$.}

\label{fig: basepoint convention}
\label{figure: reversing orientation of alpha changes basepoints}
\end{figure}

So let $\HH=\HDD$ be an oriented extended Heegaard diagram of $(M,\c)$ and let $\x,\y\in\Tab$. For each $i=1,\dots,d$ we let $d_i\sb\b_i$ be the arc from $q_i(\x)$ to $q_i(\y)$. Then there is an element $\ov{d}_i\in\pi_1(M,p)$ obtained by multiplying the $(\a^*)^{\pm 1}$'s ($\a\in\aaa$) obtained from the crossings through $d_i$ following its orientation. Then for any $\x,\y\in\Tab$ one has
\begin{align}
\label{eq: eyx via pi1}
\e(\y,\x)=\prod_{i=1}^dh(\ov{d_i})
\end{align}
in $H_1(M)$ (in multiplicative notation) where $h:\pi_1(M)\to H_1(M)$ is the Hurewicz map \cite[Lemma 4.4]{LN:Kup}.

\subsection[Homology orientations]{Homology orientations}
\label{subsection: homology orientation}

A {\em homology orientation} of a balanced sutured manifold $(M,\c)$ is an orientation $\o$ of the vector space $H_*(M,R_-(\c);\R)$. If $\HH=\HD$ is a balanced Heegaard diagram of $(M,\c)$ and $d=|\aa|=|\bb|$, then it is shown in \cite{FJR11} that a homology orientation is equivalent to a {\em sign-ordering} of $\HH$, that is, an orientation of all the curves $\aa\cup\bb$ and an total order of each set $\aa,\bb$ up to some equivalence. We refer the reader to \cite{FJR11} for this correspondence. Given an ordered (i.e. $\aa$ and $\bb$ are totally ordered), oriented Heegaard diagram $\HH$ we denote by $o(\HH)$ the corresponding homology orientation. Note that if $H_*(M,R_-(\c);\R)=0$ (e.g. when $M$ is the complement of a link in a rational homology 3-sphere) there is a canonical sign-ordering of any Heegaard diagram, determined by the condition $\det(A)>0$ where $A=(\a_i\cdot\b_j)_{i,j=1,\dots, d}$ is the intersection matrix of $\HH$. When $Y$ is a closed oriented 3-manifold and $(M_0,\c_0)$ is the associated sutured manifold (Example \ref{example: sutured from closed}), there is also a canonical homology orientation of $H_*(M_0,R_-(\c_0))=H_2(Y)\oplus H_1(Y)$, determined by any basis of $H_1(Y)$ followed by its Poincar\'e dual in $H_2(Y)$ (see \cite[Section 18]{Turaev:BOOK2}).

\subsection{Twisted torsion of sutured manifolds}\label{subs: twisted torsion for sutured manifolds}

\def\rhovc{\rho\ot\ov{h}}
We now briefly introduce the twisted relative Reidemeister torsion of a sutured manifold, for more details see \cite{Turaev:BOOK1, FV:survey}. Let $X$ be a finite connected CW complex and $Y$ a possibly empty subcomplex such that $\chi(X,Y)=0$. Let $\rho:\pi\to GL(V)$ be a representation, where $V$ is a finite dimensional vector space over a field $\kk$ and $\pi=\pi_1(X)$. The {\em twisted Reidemeister torsion} is a topological invariant $\tau^{\rho}(X,Y)\in\kk$ of $(X,Y,\rho)$ defined up to multiplication by $\pm\det\rho(g),g\in\pi$. Briefly, this is defined as follows: consider the universal covering $q:\wt{X}\to X$ of $X$ and let $Y'=q^{-1}(Y)$. Then the cellular complex $C_*(\wt{X},Y')$ is a complex of free left $\Z[\pi]$-modules, which we consider as right $\Z[\pi]$-modules via $c\cdot g\eq g^{-1}(c)$ for each $g\in\pi$ and cell $c$ of $\wt{X}$. Now consider the complex $C_*^{\rho}(X,Y)\eq C_*(\wt{X},Y')\ot_{\Z[\pi]}V$ where $V$ is a left $\Z[\pi]$-module via $\rho$. If we choose a lift to $\wt{X}$ of each cell of $X\sm Y$, then we get a preferred basis of this complex by tensoring these lifts with an arbitrary basis of $V$. If the complex $C_*^{\rho}(X,Y)$ is not acyclic we set $\tau^{\rho}(X,Y)=0$ and if it is acyclic, we define $\tau^{\rho}(X,Y)$ as the algebraic torsion of this based complex, see \cite{Turaev:BOOK1}. The $\pm\det\rho(g)$ indeterminacy of the torsion comes from the choice of lifts as well as their order and orientation. If $X$ has dimension 2 and all the 0-cells are contained in $Y$ (in particular $Y$ is non-empty) then $C_i(\wt{X},Y')=0$ for $i\neq 1,2$ and so the torsion is just the determinant of the boundary map $\p_2^{\rho}:C^{\rho}_2(X,Y)\to C_1^{\rho}(X,Y)$ which is computed via Fox calculus (see e.g. \cite[Claim 16.6]{Turaev:BOOK1}). This implies that we can define the torsion provided $\kk$ is only a commutative ring with unit and $\tau^{\rho}(X,Y)\in\kk$ (instead of the ring obtained by inverting the non-zero divisors of $\kk$). In particular, if $h:\pi_1(X)\to H_1(X)$ is the Hurewicz map and if we define $\rhoc:\pi_1(X)\to GL(V\ot_{\kk}\kk[H_1(X)])$ by
\begin{equation}
\label{eq: def of rhoc}
\rhoc(\d)(v\ot f)\eq \rho(\d)(v)\ot (h(\d)\cdot f)
\end{equation}
where $\d\in\pi_1(X),v\in V, f\in H_1(X)$, then there is a torsion $\tau^{\rhoc}(X,Y)\in\kk[H_1(X)]$ under the above hypothesis for $(X,Y)$. When $\rho\equiv 1$ and $\dim(V)=1$ this is the {\em maximal abelian torsion} of the pair $(X,Y)$.

\medskip

Now suppose $(M,\c)$ is a balanced sutured manifold with $R_-(\c)$ connected and let $\pi=\pi_1(M,p)$ where $p\in s(\c)$ is a basepoint. We now explain how to compute the torsion $\tau^{\rho}(M,R_-)$ from an extended Heegaard diagram $\HH=\HDD$. Suppose $\HH$ is oriented, $\bb$-based and {\em ordered}, that is, each set $\aa$ and $\bb$ is linearly ordered and write $\aa=\{\a_1\dots,\a_d\}, \bb=\{\b_1,\dots,\b_d\}$ in the corresponding orders. Let $\bolda=\{\a_{d+1},\dots,\a_{d+l}\}$ (unordered). Consider the presentation of $\pi_1(M,p)$ given in (\ref{eq: pres of pi1}).
 %Note that each word $\ov{\b}_i$ depends of the orientation and the basepoint of $\b_i$. 
If $F$ is the free group generated by $\a^*_1,\dots,\a^*_{d+l}$, recall that the Fox derivatives $\p/\p\a^*_i:\Z[F]\to\Z[F]$ are characterized by $\p\a^*_j/\p\a^*_i=\d_{ij}$ and $\p(uv)/\p\a^*_i=\p u/\p\a^*_i+u\cdot\p v/\p\a^*_i$ for each $u,v\in F$. From now on, we will consider the Fox derivatives $\p\ov{\b}_j/\p \a^*_i$ as elements of $\Z[\pi]$.%For any $x\in\Z[\pi]$, we let $\ov{x}\in\Z[\pi]$ be the image of $x$ under the map that sends $g$ to $g^{-1}$ for each $g\in\pi$. We let $\ov{A}$ be the matrix obtained from $A$ by applying this map on each entry.

\begin{proposition}
\label{prop: torsion via FOX calculus}
Given an ordered, oriented, based, extended Heegaard diagram $\HDD$ of $(M,\c)$, the twisted torsion of the pair $(M,R_-)$ is computed via Fox calculus by 
\begin{align*}
\tau^{\rho}(M,R_-)=\det\left(\rho\left(\s\left(\frac{\p \ov{\b}_j}{\p\a^*_i}\right)\right)\right)_{i,j=1,\dots,d}
\end{align*}
up to an indeterminacy of the form $\pm\det(\rho(g)), g\in\pi$ where $\s:\Z[\pi]\to\Z[\pi]$ is defined by $\s(g)=g^{-1}$ for $g\in\pi$. The same formula is valid for $\tau^{\rho\ot h}$.
 
%and $h:\pi_1(M,p)\to F_M$ is the projection. In particular, the left hand side is an element of $\kkFM$.
\end{proposition}
\begin{proof}
This is the twisted version of \cite[Lemma 3.6]{FJR11}. Think of $(M,\c)$ as obtained from a single 0-handle by attaching $l$-handles (specifying $R_-\t I$), then $d$-handles corresponding to the $\a$'s and then $d$-handles corresponding to the $\b$'s. Now, let $X$ be the CW complex obtained by collapsing each of these handles to its core and let $Y$ be the subcomplex of $X$ corresponding to $R_-\t I$.  By \cite[Corollary 8.5]{Turaev:BOOK2} collapsing a handle to its core does not changes the relative torsion, so $\tau^{\rho}(M,R_-)=\tau^{\rho}(X,Y).$ But $\tau^{\rho}(X,Y)=\det(\p_2^{\rho})$ where $\p_2^{\rho}:C_2^{\rho}(X,Y)\to C_1^{\rho}(X,Y)$ is the boundary map, since $X$ is a 2-complex whose 0-cells are all contained in $Y$. For an appropriate basis of $C_2^{\rho}(X,Y)$ and $C_1^{\rho}(X,Y)$, the boundary map $\p_2^{\rho}$ is represented by the matrix $\rho\left(\s\left(\p \ov{\b}_j/\p\a^*_i\right)\right)_{i,j=1,\dots, d}$ \cite[Claim 16.6]{Turaev:BOOK1}, this implies the desired formula.

%Indeed, let $p:\wt{M}\to M$ be the universal covering space of $M$ and $R'=p^{-1}(R_-)$. The extended Heegaard diagram specifies a CW complex structure of $M$ with one 0-cell, $(d+l)$ 1-cells (corresponding to the curves and arcs in $\aaa$) and $d$ 2-cells (corresponding to the $\b$'s). Moreover, $R_-$ is a subcomplex with the 0-cell of $M$ and the 1-cells corresponding to the arcs $\a_{d+1},\dots,\a_{d+l}$. The cellular complex $C_*=C_*(\wt{M},R')$ is thus a complex of free $\ZM$-modules with $C_i=0$ for $i\neq 1,2$ and $C_2\oplus C_1$ has a $\ZM$-basis corresponding to lifts of the cells of $M$ associated to the closed curves of the diagram. Under an appropiate choice of lifts, the boundary map $\p_2:C_2\to C_1$ is represented by the matrix $A\in M_{d\t d}(\Z[\pi])$, cf. Claim 16.6 of \cite{Turaev:BOOK2}. Since our convention is that $(x\cdot c)\ot v=c\ot [(\rhoc)(\s(x))\cdot v]$ in  $C_*^{\rhoc}(M,R_-)$, where $x\in\Z[\pi],v\in V$ and $c$ is a cell of $\wt{M}$, the boundary operator $\p_2^{\rho\ot \pif}$ is represented by the matrix $(\rho\ot \pif)(\s(A))$. Thus, the torsion of the complex $C_*^{\rhoc}(M,R_-)$ is the determinant of this matrix, proving the proposition.
\end{proof}

%\begin{remark}The right hand side of the above proposition is defined as an element of $\kk[H_1(M)]$. Therefore, we will consider the twisted torsion of a balanced sutured 3-manifold as an element of $\kk[H_1(M)]$. Note however that if we want to express the torsion in terms of twisted Alexander polynomials (as in Corollary \ref{corollary: sutured torsion recovers TWISTED ALEX} below), then one has to pass to $\kk[F_M]$, which is a Noetherian UFD.\end{remark}

\subsection{Refining $\tau$ using $\Spinc$ via multipoints}\label{subs: normalizing tau via Spinc} We now explain a simple Heegaard-diagrammatic way to remove the indeterminacies of the above formula, relying on the multipoint representation of $\Spinc$. This is an idea from \cite{LN:Kup} which we restate in the case of torsion. Let $\HH$ be an ordered, oriented and $\bb$-based extended Heegaard diagram of $(M,\c)$ as above, then the determinant of the right hand side of Proposition \ref{prop: torsion via FOX calculus} is a well-defined scalar of $\kk$ (i.e., there is no indeterminacy). This depends on the extra structure on $\HH$ (basepoints and orientations) up to $\pm\det(\rho(g))$ as follows. Let $q\in\b$ be the basepoint of a curve $\b\in\bb$ and let $q'\in\b$ be another basepoint. Let $c\sb\b$ be the oriented arc from $q$ to $q'$. Let $\ov{\b},\ov{\b'}$ be the words in the $\a^*$'s obtained from $\b$ but starting from $q$ and $q'$ respectively. These are related by $\ov{\b}=\ov{c}\cdot\ov{\b'}\cdot\ov{c}^{-1}$ where $\ov{c}\in\pi$ is obtained by the multiplying the $(\a^*)^{\pm 1}$ through $c$ when following its orientation. But then the Fox derivatives satisfy $$\frac{\p\ov{\b}}{\p\a^*}=\ov{c}\cdot\frac{\p\ov{\b'}}{\p\a^*}$$ when thought as elements of $\Z[\pi]$, for each $\a\in\aa$. Thus, changing the basepoint of a $\b$-curve has the effect of multiplying the above determinant by $\det\rho(\ov{c})$, where $c$ is the arc from the old basepoint to the new one. There is another source of a $\det\rho(g)$ indeterminacy, coming from the choice of orientations of the closed $\a$ curves. Indeed, reversing a curve $\a_i$ has the effect of multiplying $\p\ov{\b}_j/\p\a^*_i$ by $-\a^*_i$ on the right for each $j$, hence the determinant of Proposition \ref{prop: torsion via FOX calculus} is multiplied by $(-1)^n\det(\rho(\a^*_i)^{-1})$.
\medskip

The above $\det\rho(g)$ indeterminacies can be removed using the rule of Definition \ref{def: basepoints from multipoint}. Thus, pick a multipoint $\x=\{x_1,\dots, x_d\}$ of $\HH$, for simplicity we suppose $x_i\in\a_i\cap\b_i$ for each $i$, and put basepoints on all $\b$ curves as in Definition \ref{def: basepoints from multipoint}. This immediately removes the $\det(\rho(g))$ indeterminacy coming from the orientation of the $\a$'s: if the orientation of an $\a_i$ is reversed, then the basepoint of $\b_i$ is crossed through $\a_i$, and this cancels the above $\det(\rho(\a^*_i))$ factor (only leaving a sign indeterminacy). Now if we change the multipoint, say to $\y\in\Tab$, then the basepoints of all $\b$ curves change. More precisely, if $d_j$ is the arc of $\b_j$ from the $\x$-basepoint to the $\y$-basepoint, then the whole determinant is multiplied by $\prod_{j=1}^d\det\circ\rho(\ov{d_j})$ by the above argument. But by (\ref{eq: eyx via pi1}), this is exactly $\det\circ\rho(\e(\y,\x))$ (note that $\det\circ\rho$ descends to $H_1(M)$). Similarly, if we are given $\ss\in\Spinc(M,\c)$, then by (\ref{eq: Spinc and exy}) the scalar $\det\circ\rho(PD[\ss-s(\x)])$ is multiplied by $\det\circ\rho(\e(\x,\y))$ when going from $\x$ to $\y$. It follows that the expression
\begin{align*}
\det\circ\rho(PD[\ss-s(\x)])\cdot\det\left(\rho\left(\s\left(\frac{\p\ov{\b_j}}{\p\a^*_i}\right)\right)\right)_{i,j=1,\dots, d}
\end{align*}
is independent of the multipoint chosen, where the $\b$ curves have the basepoints coming from $\x$. This still has a sign indeterminacy, but this is easily corrected by further picking an orientation $\o$ of $H_*(M,R_-(\c);\R)$. Indeed, if $\d$ is the sign defined by $o(\HH)=\d\o$ where $o(\HH)$ is the orientation of $H_*(M,R_-(\c);\R)$ induced from the ordering and orientation of $\HH$ \cite{FJR11} and if $n=\dim(V)$, then
\begin{align*}
\d^n\cdot\det\circ\rho(PD[\ss-s(\x)])\cdot\det\left(\rho\left(\s\left(\frac{\p\ov{\b_j}}{\p\a^*_i}\right)\right)\right)_{i,j=1,\dots, d}
\end{align*}
is independent of the extra structure we added to $\HH$ (ordering, orientation and basepoints). One can then show directly that this is invariant under extended Heegaard moves, hence it depends only on $(M,\c)$, $\rho:\pi_1(M,p)\to GL(V)$, $\ss\in\Spinc(M,\c)$ and the orientation $\o$. Thus, the latter expression can be considered as another normalization of the twisted Reidemeister torsion $\tau^{\rho}(M,R_-(\c))$, cf. \cite{FJR11}. %This will indeed be a particular case of the invariant of Section \ref{sect: the invariant}.

\begin{definition}
Let $(M,\c)$ be a balanced sutured 3-manifold with connected $R_-(\c)$ and let $\rho,\ss,\o$ be as before. Then we denote by $\ovtau^{\rho}(M,\c,\ss,\o)$ the above refinement of the twisted Reidemeister torsion $\tau^{\rho}(M,R_-(\c))$.
\end{definition}

\subsection{The disconnected case}\label{subs: the disconnected case} Now let $(M,\c)$ be a balanced sutured 3-manifold with disconnected $R_-(\c)$. Attach a 2-dimensional one-handle $h_0$ to two circles of $s(\c)$ corresponding to different components of $R_-(\c)$. Thickening $h_0$ to a 3-dimensional one-handle $h_0\t[-1,1]$ attached along $s(\c)\t[-1,1]=\c$ results in a sutured manifold $(M',\c')$ in which $R_-(\c')$ has one component less than $R_-(\c)$. Therefore, after sufficiently many such handle attachments, we can ensure that the resulting sutured manifold, still denoted $(M',\c')$, has connected $R_-(\c')$. Let $\ss\in\Spinc(M,\c)$ and suppose $v$ is a vector field representing $\ss$. One can extend $v$ to a non-vanishing vector field $v'$ on any such $M'$ by letting $v'|_M=v$ and letting $v'$ be the vertical vector field over each one-handle $h_0\t[-1,1]$ attached to $M$. The $\Spinc$ structure on $M'$ defined by $v'$ is denoted $i(\ss)$. This defines an affine map $i:\Spinc(M,\c)\to\Spinc(M',\c')$, that is, it satisfies that $$j^*(i(\ss_1)-i(\ss_2))=\ss_1-\ss_2$$ for any $\ss_1,\ss_2\in\Spinc(M,\c)$, where $j^*:H^2(M',\p M')\to H^2(M,\p M)$ is induced by inclusion, see \cite[Proposition 5.4]{Juhasz:polytope}. a homology orientation $\o$ of $(M,\c)$ induces a homology orientation $\o'$ for $(M',\c')$ since $H_*(M,R_-;\R)\cong H_*(M',R'_-;\R)$. Then we set $$\ovtau^{\rho}(M,\c,\ss,\o)\eq \ovtau^{\rho'}(M',\c',i(\ss),\o')$$ 
where $(M',\c')$ is any sutured manifold with connected $R_-(\c')$ constructed as above and $\rho':\pi_1(M')\to GL(V)$ is an arbitrary extension of $\rho$ (note that $\pi_1(M')=\pi_1(M)*F$ where $F$ is a free group with as many generators as handles were attached to $M$ to get $M'$). It is also a consequence of \cite{LN:Kup} (via Theorem \ref{Theorem: I for sd product is Fox calculus} and Theorem \ref{Theorem: I at exterior is torsion}) that this is well-defined, i.e., independent of the $(M',\c')$ chosen and it is a consequence of \cite[Lemma 3.20]{FJR11} that this is indeed a refinement of the torsion $\tau^{\rho}(M,R_-(\c))$. Note that if we take $\rho\ot h$, then a priori $\ovtau^{\rho'\ot h'}(M',\c',i(\ss),\o')\in\kk[H_1(M')]$. This actually belongs to $\kk[H_1(M)]$ as a consequence of Lemma \ref{lemma: disconnected case twisted poly belongs to H1M} below together with Theorem \ref{Theorem: I at exterior is torsion}.
\medskip

\subsection{Twisted torsion of sutured link complements}\label{subs: Twisted torsion of sutured link complements} Now let $L$ be a link in a closed oriented 3-manifold $Y$, with components $L_1,\dots, L_m$. Let $(M_L,\c_L)$ be the associated sutured manifold (see Example \ref{example: sutured from link complement}) and let $\rho:\pi_1(M_L)\to GL(V)$ be a representation into a finite dimensional vector space $V$ over a field $\kk$. For simplicity, we will assume $Y$ is an homology sphere, so $H_1(M_L)\cong\Z^{\oplus m}$. Let $h:\pi_1(M)\to H_1(M_L)$ be the projection, recall that $\rho$ can be combined with $h$ to define a representation $\rhoc$ over $V'=V\ot_{\kk}\kk[H_1(M_L)]$ as in (\ref{eq: def of rhoc}). If $\wt{M_L}$ is the universal covering of $M_L$, then $C_*^{\rhoc}(\wt{M_L})=C_*(\wt{M_L})\ot_{\Z[\pi]}V'$ is a complex of free $\Z[H_1(M_L)]$-modules ($V'$ is a $\Z[\pi]$-module via $\rhoc$). For each $i\geq 0$, the $i$-th homology group of this complex is a finitely presented module over the Noetherian UFD $\kk[H_1(M_L)]$, so we can define its order, which is an element of $\kk[H_1(M_L)]$ defined up to multiplication by a unit. This is the $i$-th {\em twisted Alexander polynomial} of $L$, denoted $\De_{L,i}^{\rho}$, the first twisted Alexander polynomial is denoted just by $\De^{\rho}_L$. If $L$ is ordered and oriented and if $t_i\in H_1(M_L)$ is the class of a meridian around $L_i$ oriented so that $lk(t_i,L_i)=+1$ for each $i=1,\dots,m$ then $\kk[H_1(M_L)]\cong \kk[t_1^{\pm 1},\dots,t_m^{\pm 1}]$ canonically, and so we can think of the twisted Alexander polynomials of $L$ as elements of $\kk[t_1^{\pm 1},\dots, t_m^{\pm 1}]$ defined up to multiplication by a unit. See \cite{Turaev:BOOK1, FV:survey} for more details.
\medskip

We now relate these polynomials to the torsion of $(M_L,R_-)$. If $R_-=R_1\cup\dots\cup R_m$ where $R_i$ is an annulus around $L_i$, let $y_i\in R_i$ be a basepoint and $a^*_i$ be the generator of $\pi_1(R_i,y_i)$ corresponding to $t_i$. Then $a^*_i$ is defined in $\pi_1(M_L,p)$ up to conjugation, so $\det(t_i\rho(a^*_i)-I_n)$ is well-defined and non-zero. Note that the $0$-th Alexander polynomial $\De_{L,0}^{\rho}$ is always non-zero by \cite[Proposition 3.2, (1)]{FV:survey}.

\def\Der{\De^{\rho}}

\begin{proposition}
\label{corollary: sutured torsion recovers TWISTED ALEX}
Let $(M_L,\c_L)$ be the sutured manifold associated to the complement of an $m$-component (ordered, oriented) link $L\sb Y$ where $Y$ is an homology sphere. Let $\rho:\pi_1(M_L)\to GL(V)$ be a homomorphism, where $V$ is an $n$-dimensional vector space over a field $\kk$. Then
\begin{align*}
\tau^{\rho\ot h}(M_L,R_-(\c_L))\dot{=}
\frac{\prod_{i=1}^m \det(t_i\rho(a^*_i)-I_n)}{\De^{\rho}_{L,0}}\cdot \De^{\rho}_L(t_1,\dots,t_m)\in\kk[t_1^{\pm 1},\dots,t_m^{\pm 1}].
\end{align*}

\end{proposition}
\begin{proof}
Let $M=M_L$, since $\tau^{\rho\ot h}(R_-)=\prod_{i=1}^m \det(t_i\rho(a^*_i)-I_n)^{-1}$ is non-zero, we have $$\tau^{\rho\ot h}(M,R_-)=\tau^{\rho\ot h}(R_-)^{-1}\tau^{\rho\ot h}(M)$$ by multiplicativity of the torsion. If $\De_L^{\rho}=0$, then $C_*^{\rho\ot h}(M)$ is not acyclic and so $\tau^{\rho\ot h}(M)=0$. Thus $\tau^{\rho\ot h}(M,R_-)$ is zero and the above equation holds. Suppose now that $\De_L^{\rho}\neq 0$, then $\De_{L,2}^{\rho}=1$ by \cite[Proposition 3.2]{FV:survey} (since $M$ has non-empty boundary) and since $\De_{L,3}^{\rho}=1$ and $\De_{L,0}^{\rho}\neq 0$ always hold, it follows that $C_*^{\rho\ot h}(M)$ is acyclic. By \cite[Theorem 4.7]{Turaev:BOOK1} we get
\begin{align*}
\tau^{\rho\ot h}(M,R_-)\dot{=}
\prod_{i=1}^m \det(t_i\rho(a^*_i)-I_n)\cdot\frac{\Der_{L,1}}{\Der_{L,0}}
\end{align*}
as desired. 

\end{proof}

Note that $\De_{L,0}^{\rho}=1$ whenever $m>1$ (and $\rho$ is arbitrary) or if $m=1$ and $\rho$ is irreducible with $\rho|_{\Ker(h)}\neq 1$, while if $m=1$ and $\rho\equiv 1$ (with $\dim(V)=1$), then $\De_{L,0}=t-1$ \cite[Proposition 3.2]{FV:survey}.

\section{Twisted Kuperberg invariants}
\label{sect: the invariant}

\def\rhoa{\rho_A}\def\rhob{\rho_B}

%In this section we construct the sutured manifold invariant of Theorem \ref{Theorem: Z invariant}. We proceed in various stages. First, we assume that both the sutured manifold $M$ and the subsurface $R_-(\c)$ of $\p M$ are connected. After recalling the original construction of Kuperberg \cite{Kup1} in Subsection \ref{subs: Kup invs}, we define a scalar $\ZHrho(\HH,\o)$ in Subsection \ref{subs: Refining Kup}, where $\HH$ is an extended Heegaard diagram of $(M,\c)$. Here $\o$ is a homology orientation which plays no role if the cointegral $c$ of $H$ has degree zero. In Subsection \ref{subs: proof of inv, special case} we show that $\ZHrho(\HH,\o)$ is a topological invariant provided $\Im(\rho)\sb\Ker(\lH)$, which we denote by $\ZHrho(M,\c,\o)$ (or just $\ZHrho(M,\c)$ if $|c|=0$). The case of arbitrary $\rho$ is treated in Subsection \ref{subs: proof of inv, general case}. This needs a $\Spinc$ structure and leads to an invariant denoted $\ZHrho(M,\c,\ss,\o)$. The case when $R_-(\c)$ is disconnected is discussed in Subsection \ref{subs: disconnected case}. The most general definition of the invariant of Theorem \ref{Theorem: Z invariant} is given as Definition \ref{def: Z for arbitrary R}. Finally, in Subsection \ref{subs: twisted Kup} we explain how to lift these invariants to polynomials whenever the Hopf superalgebra $H$ is $\Z$-graded.\medskip

In this section, we define the invariant $I_H^{\rho}(M,\c,\ss,\o)$ and study some of its properties. We begin by defining a scalar $Z_H^{\rho}(\HH)$ by twisting Kuperberg's formulas via Fox calculus using the homomorphism $\rho$, where $\HH$ is an extended Heegaard diagram of a balanced sutured 3-manifold $(M,\c)$ with connected $R_-(\c)$. This is not necessarily a topological invariant of $(M,\c,\rho)$ as it may have some indeterminacies of the form $\pm\lH(\rho(g)), g\in\pi_1(M)$. In Subsection \ref{subs: general definition w Spinc and hom or} we fix these indeterminacies using homology orientations and $\Spinc$ structures, leading to $I_H^{\rho}(M,\c,\ss,\o)$ and we treat the case of disconnected $R_-(\c)$ as well. In Subsection \ref{subs: twisted Kup polynomials} we explain how to extend such numerical invariants to polynomial invariants when $H$ is $\N$-graded. In Subsection \ref{subs: computation of ZHn}, we prove Theorem \ref{Theorem: I at exterior is torsion}. Finally, we discuss in Subsection \ref{sect: going beyond R torsion} the constraints imposed by the involutory condition of Hopf superalgebras.

\medskip

Throughout all this section, we let $(H,m,1,\De,\e,S)$ be a finite dimensional involutory Hopf superalgebra over a field $\kk$. We suppose $H$ admits a two-sided cointegral $\coint\in H$ and a two-sided integral $\int\in H^*$, and we assume $\int(\coint)=1$. %We denote by $\lH:\Aut(H)\to \kk^{\t}$ the homomorphism characterized by $\a(\coint)=\lH(\a)\coint$. 
We also let $(M,\c)$ be a connected balanced sutured 3-manifold and let $\rho:\pi_1(M,p)\to \Aut(H)$ be a group homomorphism, where $p\in s(\c)$ is a basepoint.

\subsection{Twisted tensors associated to Heegaard diagrams}\label{subs: tensors and HDs}

\def\mbb{m_{\bb}}\def\Ibb{\mc{I}_{\bb}} 
\def\umbb{\underline{m}_{\bb}}\def\uDeaa{\underline{\De}_{\aa}}
\def\uSaa{\underline{S}_{\aa}}\def\uDeaaa{\underline{\De}_{\aaa}}
\def\uSaaa{\underline{S}_{\aaa}}\def\uSbb{\underline{S}_{\bb}}\def\umb{\underline{m}_{\b}}\def\uDea{\underline{\De}_{\a}}
\def\um{\underline{m}}\def\uDe{\underline{\De}}\def\uS{\underline{S}}
\def\mbb{m_{\bb}}\def\ovbx{\ov{\b}_x}
\def\ka{\kappa_{\a}}\def\kai{\kappa_{\a_i}}\def\mb{m_{\b}}\def\Sb{S_{\b}}\def\Sbb{S_{\bb}}\def\kap{\kappa} 

We begin by supposing that $R_-(\c)$ is connected. Let $\HH=(\S,\aaa,\bb)$ be an extended Heegaard diagram of $(M,\c)$ and let $\aaa=\aa\cup\bolda$ with $\aa=\{\a_1,\dots,\a_d\}$ the closed curves and $\bolda=\{\a_{d+1},\dots,\a_{d+l}\}$ the arcs. From now on, we suppose $\HH$ is {\em ordered}, that is, both sets $\aa$ and $\bb$ are totally ordered, and that $\HH$ is oriented and $\bb$-based, so all the curves and arcs are oriented and each curve in $\bb$ has a basepoint. With this additional structure, we will define a scalar $Z_H^{\rho}(\HH)\in\kk$ by contracting some tensors associated to the closed circles and the crossings.

\medskip
 These tensors are defined in the following way. First, to each closed $\a\in\aa$ we associate the tensor

\def\Ibb{\mc{I}_{\bb}}

%We will always suppose that $\aaa$ and $\bbb$ are transversal as submanifolds of $\S$.

\begin{figure}[H]
\centering
\begin{pspicture}(0,-0.5572612)(1.5572608,0.5572612)
\psline[linecolor=black, linewidth=0.018, arrowsize=0.05291667cm 2.0,arrowlength=0.8,arrowinset=0.2]{->}(0.3,0.0)(0.7,0.0)
\psline[linecolor=black, linewidth=0.018, arrowsize=0.05291667cm 2.0,arrowlength=0.8,arrowinset=0.2]{->}(1.2,0.2)(1.6,0.6)
\psline[linecolor=black, linewidth=0.018, arrowsize=0.05291667cm 2.0,arrowlength=0.8,arrowinset=0.2]{->}(1.2,-0.2)(1.6,-0.6)
\rput[bl](1.46,-0.24){$\vdots$}
\psline[linecolor=black, linewidth=0.018, arrowsize=0.05291667cm 2.0,arrowlength=0.8,arrowinset=0.2]{->}(1.2,0.1)(1.6,0.3)
\rput[bl](0.0,-0.08){$\coint$}
\rput[bl](0.8,-0.1){$\Delta$}
\end{pspicture}
\end{figure}

\noindent where the coproduct has as many outcoming legs as crossings through the curve $\a$. More precisely, we suppose that $\a$ has a basepoint (in $\a\sm\bb$) and that from top to bottom, the legs correspond to the crossings of $\a$ encountered as one follows the orientation of $\a$ starting from this basepoint. Note that since we supposed $\coint$ is two-sided, this tensor does not depends on the basepoint of $\a$ chosen. Now, to each crossing $x\in\aa\cap\bb$ we associate the tensor
\begin{figure}[H]
\centering 
\begin{pspicture}(0,-0.12)(1.5265589,0.12)
\psline[linecolor=black, linewidth=0.018, arrowsize=0.05291667cm 2.0,arrowlength=0.8,arrowinset=0.2]{->}(0.0,-0.02)(0.4,-0.02)
\rput[bl](0.55,-0.12){$S^{\epsilon_x}$}
\psline[linecolor=black, linewidth=0.018, arrowsize=0.05291667cm 2.0,arrowlength=0.8,arrowinset=0.2]{->}(1.2,-0.02)(1.6,-0.02)
\end{pspicture}

\end{figure}

\noindent where $\e_x\in\{0,1\}$ is defined by $(-1)^{\e_x}=m_x$ and $m_x$ is the intersection sign. So far, these are the tensors from \cite{Kup1} and we haven't used yet that $\HH$ is an extended Heegaard diagram. The arcs in $\bolda$ will play a role in the definition of the $\b$-tensors, through the homomorphism $\rho$. To define these recall that, since $R_-(\c)$ is assumed to be connected, the extended Heegaard diagram together with the orientation of $\aaa\cup\bb$ and $\bb$-basepoints determines a presentation of $\pi_1(M,p)$ as in \ref{eq: pres of pi1}. This has a generator $\a^*$ for each $\a\in\aaa$ and each $\b\in\bb$ determines a relator $\ov{\b}$, which is obtained by writing $(\a^*)^{m_x}$ for each crossing $x\in\aaa\cap\b$ with sign $m_x$ and multiplying these elements from left to right as one follows the orientation of $\b$ starting from its basepoint. Thus, for a given $\a$, the appearances of $\a^*$ in $\ov{\b}$ are indexed by the intersection points of $\a\cap\b$ and so the Fox derivative $\p\ov{\b}/\p\a^*$can be written as
\begin{align*}
\frac{\p \ov{\b}}{\p\a^*}=\sum_{x\in\a\cap\b}m_x\ovbx\in\Z[F],
\end{align*}
for some word $\ov{\b}_x\in F$, where $F$ is the free group generated by $\a^*,\a\in\aaa$. More precisely, this word is defined as follows:

\begin{notation}
\label{def: bx}
Suppose a curve $\b\in\bb$ is oriented and has a basepoint in $\b\sm \aaa$. %For each $x\in\b\cap\aa$ let $q_x\in\b$ be a point defined as follows: if the crossing at $x$ is positive (resp. negative), then $q_x$ is right before (resp. after) $x$ along $\b$. Then we denote $\ov{\b}_x=\ov{c_x}\in\pi_1(M,p)$ where $c_x$ is the subarc of $\b$ starting at $q$ and ending at $q_x.$ Equivalently, 
For each crossing $x$ through $\b$ let $\a_x$ be the $\a$-curve containing $x$. Write $\ov{\b}=w(\a_x^*)^{m_x}w'$ where $w$ (resp. $w'$) is the product of the $\a^*$'s corresponding to the crossings of $\b$ that precede (resp. succeed) $x$, where the crossings are ordered following the orientation of $\b$ starting from its basepoint. Then 
\begin{align*}
\label{eq: bx}
\ovbx\eq\left\{\begin{array}{ccc}
& w & \text{ if }   m_x=1\\
&w(\a_x^*)^{-1} & \text { if } m_x=-1.
\end{array}\right.
\end{align*}
From now on, we think of $\ov{\b}_x$ as an element of $\pi_1(M,p)$ instead of the free group generated by the $\a^*$'s.
\end{notation}

\medskip

Hence, each crossing $x$ through a given curve $\b\in\bb$ has associated a Hopf automorphism $\rho(\ov{\b}_x)\in\Aut(H)$. If $x_1,\dots, x_k$ are the points of $\aa\cap\b$ encountered as one follows the orientation of $\b$ starting from its basepoint, then we associate to $\b$ the tensor

\begin{figure}[H]
\centering
\begin{pspicture}(0,-1.5591924)(3.79,1.5591924)
\psline[linecolor=black, linewidth=0.018, arrowsize=0.05291667cm 2.0,arrowlength=0.8,arrowinset=0.2]{->}(2.2,-0.6499999)(2.6,-0.24999993)
\psline[linecolor=black, linewidth=0.018, arrowsize=0.05291667cm 2.0,arrowlength=0.8,arrowinset=0.2]{->}(3.1,-0.049999923)(3.5,-0.049999923)
\psline[linecolor=black, linewidth=0.018, arrowsize=0.05291667cm 2.0,arrowlength=0.8,arrowinset=0.2]{->}(2.2,0.5500001)(2.6,0.15000008)
\rput[bl](2.7,-0.14999992){$m$}
\rput[bl](3.6,-0.18999992){$\int$}
\rput[bl](1.3,0.6000001){$\rho(\overline{\beta}_{x_1})$}
\rput[bl](0.8,-0.19999993){$\rho(\overline{\beta}_{x_2})$}
\rput[bl](1.4,-1.0999999){$\rho(\overline{\beta}_{x_k})$}
\psline[linecolor=black, linewidth=0.018, arrowsize=0.05291667cm 2.0,arrowlength=0.8,arrowinset=0.2]{->}(1.9,-0.049999923)(2.5,-0.049999923)
\psline[linecolor=black, linewidth=0.018, arrowsize=0.05291667cm 2.0,arrowlength=0.8,arrowinset=0.2]{->}(1.3,-1.55)(1.7,-1.15)
\psline[linecolor=black, linewidth=0.018, arrowsize=0.05291667cm 2.0,arrowlength=0.8,arrowinset=0.2]{->}(0.0,-0.049999923)(0.6,-0.049999923)
\psline[linecolor=black, linewidth=0.018, arrowsize=0.05291667cm 2.0,arrowlength=0.8,arrowinset=0.2]{->}(1.2,1.5500001)(1.6,1.1500001)
\psarc[linecolor=black, linewidth=0.04, linestyle=dotted, dotsep=0.10583334cm, dimen=outer](1.4,-0.4999999){0.35}{180.0}{230.0}
\end{pspicture}
\end{figure}
\noindent where, from top to bottom, the incoming legs of this tensor correspond to $x_1,\dots, x_k$. %Note that the duals of the arcs of $\bolda$ may appear in the $\ov{\b}_x$'s .
\medskip

Now, take the tensor product of all the $\a$-tensors according to the ordering of $\aa$. In other words, stack the above $\a$-tensors one over another in such a way that, from top to bottom, they are ordered as in $\aa$. Do the same for the $\b$-tensors, now following the ordering of $\bb$. Since each outcoming leg of an $\a$-tensor corresponds to a unique crossing $x$, as well as to a unique incoming leg of a $\b$-tensor, we can compose all the outcoming legs of the above ordered $\a$-tensor with all the incoming legs of the $\b$-tensors along the tensors corresponding to the crossings. The result is a scalar in the base field of $H$.

%To be more precise, we take the tensor product of all the $\a$-tensors, ordered from left to right according to the ordering of $\aa$. This gives an element $T_{\aa}\in H^{\ot N}$ where $N$ is the total number of crossings between $\aa$ and $\bb$. Similarly, we take the tensor of all the $\b$-tensors, ordered according to the ordering of $\bb$. This defines an element of $T_{\bb}:H^{\ot N}\to\kk$. Since each $H$ factor of $H^{\ot N}$ corresponds to a crossing $x$, the tensor product of the $S^{\e_x}$'s defines a map $T_{\text{crossings}}:H^{\ot N}\to H^{\ot N}$. The contraction of the tensors above amounts to evaluate\begin{align*}T_{\bb}(T_{\text{crossings}}(T_{\aa}))\in\kk.\end{align*}

\begin{definition}
Let $\HH=\HDD$ be an ordered, oriented, $\bb$-based extended Heegaard diagram of $(M,\c)$ and $\rho:\pi_1(M)\to\Aut(H)$ a group homomorphism. We denote by $$Z_H^{\rho}(\HH)\in\kk$$ the contraction of the tensors defined above. We will also denote by $K_H^{\rho}(\HH)$ the tensor $H^{\ot d}\to H^{\ot d}$ obtained by contracting the tensors above (here $d=|\aa|=|\bb|$), but without the cointegrals and integrals, so that $$Z_H^{\rho}(\HH)=\int^{\ot d}(K_H^{\rho}(\HH)(\coint^{\ot d})).$$
Note that $\HH$ needs to be $\aa$-based as well for $K_H^{\rho}(\HH)$ to be defined (as we need to know at which crossing of a given $\a$ we start applying the coproduct).
\end{definition}
\medskip

The scalar $Z_H^{\rho}(\HH)$ is not always a topological invariant of $(M,\c,\rho)$, but it is so provided $\Im(\rho)\sb \Ker(\lH)$ and the cointegral of $H$ has degree zero (as we will see later). If $\Im(\rho)$ is not contained in $\Ker(\lH)$, then $Z_H^{\rho}(\HH)$ has a $\pm\Im(\lH\circ\rho)$-valued indeterminacy coming from the choice of basepoints of $\bb$ as well as the orientations of the alpha curves. This is analogous to what happens to the torsion as we saw in Subsection \ref{subs: normalizing tau via Spinc}. For instance, let $q,q'$ be two basepoints on a curve $\b\in\bb$ and let $c\sb\b$ the oriented arc from $q$ to $q'$. If $\ov{\b}_x,\ov{\b}_x'$ denote the above Fox calculus elements starting from $q$ and $q'$ respectively, then $$\ov{\b}_x=\ov{c}\cdot\ov{\b}'_x$$ for each crossing $x$ through $\b$. Since $\int\circ\rho(\ov{c})=\lH(\rho(\ov{c}))\int$, it follows that the tensor associated to $\b$ starting from $q$ is $\lH(\rho(\ov{c}))$ times the tensor associated to $\b$ but starting from $q'$. Hence $Z_H^{\rho}(\HH)=\lH(\rho(\ov{c}))Z_H^{\rho}(\HH')$
where $\HH'$ is the ordered, oriented, based extended Heegaard diagram obtained by moving the basepoint $q$ to $q'$. A similar indeterminacy appears when reversing the orientation of an $\a$-curve. When the cointegral has degree one, then $Z_H^{\rho}(\HH)$ has a sign indeterminacy coming from the choice of ordering and orientations of $\aa\cup\bb$.

\begin{example}
\label{example: left trefoil}
Let $K\sb S^3$ be the left trefoil knot and let $(M,\c)$ be the associated sutured 3-manifold, that is, $M=S^3\sm N(K)$ and $\c$ consists of a pair of oppositely oriented meridians. Consider the (oriented, based) extended Heegaard diagram of $(M,\c)$ of Figure \ref{fig: dual curves in pi1} (left). For simplicity of notation, we denote $\a^*,a^*\in\pi_1(M)$ just by $\a,a$. If $x_1,x_2,x_3$ are the points of $\b\cap\a$, encountered as one follows the orientation of $\b$ starting from the given basepoint, then $\ov{\b}_{x_1}=a, \ov{\b}_{x_2}=a\a a^{-1}\a^{-1}$ and $\ov{\b}_{x_3}=a\a a^{-1} \a^{-1}a^{-1}$. If we suppose $\rho$ abelian, then the above contraction of tensors is given as follows:
\begin{figure}[H]
\centering
\begin{pspicture}(0,-0.975)(6.08,0.975)
\psline[linecolor=black, linewidth=0.018, arrowsize=0.05291667cm 2.0,arrowlength=0.8,arrowinset=0.2]{->}(0.4,-0.015)(0.8,-0.015)
\rput[bl](0.0,-0.115){$\coint$}
\rput[bl](1.0,-0.115){$\Delta$}
\rput[bl](2.85,-0.115){$S$}
\rput[bl](2.65,0.625){$\rho(a)$}
\psline[linecolor=black, linewidth=0.018, arrowsize=0.05291667cm 2.0,arrowlength=0.8,arrowinset=0.2]{->}(1.4,-0.015)(2.7,-0.015)
\psline[linecolor=black, linewidth=0.018, arrowsize=0.05291667cm 2.0,arrowlength=0.8,arrowinset=0.2]{->}(3.2,-0.015)(4.6,-0.015)
\rput[bl](2.45,-0.975){$\rho(a^{-1})$}
\psline[linecolor=black, linewidth=0.018, arrowsize=0.05291667cm 2.0,arrowlength=0.8,arrowinset=0.2]{->}(5.3,-0.015)(5.7,-0.015)
\rput[bl](4.8,-0.115){$m$}
\rput[bl](5.9,-0.145){$\int$}
\psbezier[linecolor=black, linewidth=0.018, arrowsize=0.05291667cm 2.0,arrowlength=0.8,arrowinset=0.2]{->}(1.4,0.185)(1.5833334,0.585)(2.1333334,0.785)(2.5,0.7850000000000046)
\psbezier[linecolor=black, linewidth=0.018, arrowsize=0.05291667cm 2.0,arrowlength=0.8,arrowinset=0.2]{->}(1.4,-0.215)(1.6,-0.615)(2.0,-0.815)(2.4,-0.815)
\psbezier[linecolor=black, linewidth=0.018, arrowsize=0.05291667cm 2.0,arrowlength=0.8,arrowinset=0.2]{<-}(4.6,0.185)(4.4,0.585)(3.8,0.785)(3.4,0.785)
\psbezier[linecolor=black, linewidth=0.018, arrowsize=0.05291667cm 2.0,arrowlength=0.8,arrowinset=0.2]{<-}(4.6,-0.215)(4.4,-0.615)(4.0,-0.815)(3.6,-0.815)
\end{pspicture}

\end{figure}

\noindent If $H=\kk[X]/X^2$ is an exterior Hopf algebra, then $\rho(a)\in\Aut(H)$ has the form $\rho(a)(X)=tX$ for some $t\in\kk^{\t}$. Using that $\coint=X$ and $\int(X^j)=\d_{j,1}$ the above contraction of Hopf algebra tensors reduces to
\medskip
 $$\int(\rho(a)(X)+S(X)+\rho(a^{-1})(X))=t-1+t^{-1}$$
 
\medskip
 
\noindent which is the Alexander polynomial of the left trefoil. %This example illustrates our second main theorem, the reader familiar with Reidemeister torsion may immediately jump to Subsection \ref{subs: computation of ZHn}.
\end{example}

%The former indeterminacy can be corrected by picking a $\Spinc$ structure on $(M,\c)$ while the latter can be corrected using a homology orientation, this will be explained below (following the ideas of \cite{LN:Kup}).

\begin{remark}
\label{remark: twisted Kup at trivial rho}
If $\HH=\HD$ is a Heegaard diagram of a closed 3-manifold $Y$ in the usual sense, so $\S$ is a closed surface, then deleting a small disk to $\S$ defines a sutured Heegaard diagram $\HH_0$ of the associated sutured 3-manifold $(M_0=Y\sm B^3,\c_0=S^1\sb \p B)$, where $B\sb Y$ is an embedded 3-ball. In this situation, the (involutory) formulas of Kuperberg \cite{Kup1} are the $\rho\equiv 1$ case of the formulas here, so we get
\begin{align*}
Z_H^{\rho\equiv 1}(\HH_0)=I_H^{Kup}(Y).
\end{align*}
Similarly, if $\HH_K$ is a Heegaard diagram of the sutured complement of a knot $K$ in a closed 3-manifold $Y$, then computing $Z_H^{\rho}(\HH_K)$ at $\rho\equiv 1$ has the effect of forgetting the sutures. But doing this results in a Heegaard diagram of the underlying 3-manifold, so
\begin{align*}
Z_H^{\rho\equiv 1}(\HH_K)=I_H^{Kup}(Y)
\end{align*}
as well.
\end{remark}

\subsection{Definition of $I_H^{\rho}(M,\c,\ss,\o)$}\label{subs: general definition w Spinc and hom or} Fix a $\Spinc$ structure $\ss$ on $(M,\c)$ and an orientation $\o$ of the vector space $H_*(M,R_-(\c);\R)$. We now explain how to remove the indeterminacies of $Z_H^{\rho}(\HH)$ (when $R_-(\c)$ is connected) and define the invariant $I_H^{\rho}(M,\c,\ss,\o)$ for arbitrary $(M,\c,\rho,\ss,\o)$. That this is an invariant is a consequence of Theorem \ref{Theorem: I for sd product is Fox calculus}, but we emphasize that the arguments are a generalization of those of Subsection \ref{subs: normalizing tau via Spinc} for the torsion.
\medskip

Let $\HH$ be an ordered, oriented, extended Heegaard diagram of $(M,\c)$ (with connected $R_-(\c)$). Let $\x\in\Tab$ be a multipoint, and suppose $\HH$ is $\bb$-based via $\x$ using the convention of Definition \ref{def: basepoints from multipoint}. We note by $\ZHrho(\HH,\x)$ the tensor $\ZHrho(\HH)$ where $\HH$ has the basepoints on $\bb$ induced from $\x$. The multipoint $\x$ defines a $\Spinc$ structure $s(\x)$ as in Subsection \ref{subs: Spinc}, and comparing this with our chosen $\ss\in\Spinc(M,\c)$ we get an homology class $$f_{\ss,\x}\eq PD[s(\x)-\ss]\in H_1(M).$$
The composition $\lH\circ\rho:\pi_1(M,p)\to \kk^{\t}$ descends to $H_1(M)$ and so can be evaluated over $f_{\ss,\x}$. Then the scalar $$\lH\circ\rho(f_{\ss,\x})\cdot\ZHrho(\HH,\x)\in\kk$$ turns out to be independent of the multipoint chosen. The argument is the same as in Subsection \ref{subs: normalizing tau via Spinc}, only that $\det$ is replaced by $\lH$. Moreover, this scalar depends on the orientations of the closed curves only up to a sign. To correct this sign indeterminacy (along with the sign indeterminacy coming from the ordering of the closed curves) we use the homology orientation in the same way as we did for the torsion: the ordering and orientation of $\HH$ induces a homology orientation $o(\HH)$ and so there is a sign $\d$ defined by $$o(\HH)=\d\o.$$ 
Then
$$\d^{|\coint|}\cdot\lH\circ\rho(f_{\ss,\x})\cdot\ZHrho(\HH,\x)$$
is independent of the ordering, orientations and the multipoint $\x$ chosen. Then one has to show that this formula is invariant under extended Heegaard moves, so it defines a topological invariant of the tuple $(M,\c,\rho,\ss,\o)$. %This follows from the arguments of Kuperberg \cite{Kup1} together with some extra care for 
This is a consequence of \cite{LN:Kup} via Theorem \ref{Theorem: I for sd product is Fox calculus}.

\begin{definition}
Let $(M,\c)$ be a balanced sutured manifold with connected $R_-(\c)$. We denote the above invariant by $I_H^{\rho}(M,\c,\ss,\o)$, that is,
\begin{align*}
I_H^{\rho}(M,\c,\ss,\o)\eq \d^{|\coint|}\cdot\lH\circ\rho(f_{\ss,\x})\cdot\ZHrho(\HH,\x)\in\kk
\end{align*}
where $\HH$ is any ordered, oriented, extended Heegaard diagram of $(M,\c)$ with basepoints coming from a multipoint $\x$ of $\HH$ and $f_{\ss,\x}\in H_1(M)$ and $\d\in \{\pm 1\}$ are defined as above.
\end{definition}

Whenever $(M,\c)$ has disconnected $R_-(\c)$ we proceed as in Subsection \ref{subs: the disconnected case}. Thus, we take any balanced sutured manifold $(M',\c')$ obtained by adding one-handles to $M$ along $\c$ in such a way that $R_-(\c')$ is connected and set $$I_H^{\rho}(M,\c,\ss,\o)\eq I_H^{\rho'}(M',\c',i(\ss),\o')$$ where $\rho':\pi_1(M')\to\Aut(H)$ is any homomorphism such that $\rho'\circ j_*=\rho$, where $j_*:\pi_1(M)\to\pi_1(M')$ is induced by inclusion and $i(\ss),\o'$ are as in Subsection \ref{subs: the disconnected case}. Again, this is well-defined as a consequence of our previous work via Theorem \ref{Theorem: I for sd product is Fox calculus}, see \cite[Proposition 4.7]{LN:Kup}.

%\begin{remark}Our invariant has to be seen as an invariant of $M$ {\em relative to $R_-(\c)$}, where $R_-(\c)$ is half of the boundary. This has sense by the following reason: if $M$ is balanced, then $\chi(M,R_-)=0$. Thus, the relative Reidemeister torsion $\tau(M,R_-)$ is defined for any balanced sutured 3-manifold (e.g. complements of Seifert surfaces), while the absolute torsion $\tau(M)$ is only defined for closed 3-manifolds and link complements (i.e. when $\chi(M)=0$).\end{remark}

\begin{remark}
\label{remark: Spinc dependence of I}
The invariant $I_H^{\rho}$ depends on the $\Spinc$ structure on $(M,\c)$ as follows: for any $h\in H^2(M,\p M)$
\begin{align*}
I^{\rho}_{H}(M,\c,\ss+h,\o)=\lH\circ\rho(PD[h])^{-1}\cdot I^{\rho}_{H}(M,\c,\ss,\o)
\end{align*}
where $PD:H^2(M,\p M)\to H_1(M)$ is Poincar\'e duality. This follows from the definitions using that $PD[s(\x)-(\ss+h)]=PD[s(\x)-\ss]PD[h]^{-1}$.
\end{remark}

\def\FM{H_1(M)}\def\rhoc{\rho\ot h}\def\HM{H_M}\def\kkM{\kk[H_1(M)]}

\subsection{Twisted Kuperberg polynomials}\label{subs: twisted Kup polynomials} Suppose now that $H$ is $\N$-graded (together with the previous hypothesis as well). We denote the $\N$-degree of $H$ by $|\cdot|_0$. We explain now how to extend our invariant $I_H^{\rho}$ to $I_H^{\rho\ot h}\in\kk[H_1(M)]$ in a way that mimics the passage from the torsion $\tau^{\rho}$ to $\tau^{\rho\ot h}$.

\medskip
Let $(M,\c)$ be a balanced sutured manifold and set $\HM\eq H\ot_{\kk}\kk[H_1(M)]$. Recall that for any $\a\in \Aut(H)$ and $f\in H_1(M)$ there is a $\kk[H_1(M)]$-linear Hopf automorphism $\a\ot f$ of $\HM$ defined by
\begin{align*}
\a\ot f(h\ot f')\eq \a(h)\ot (f^{|h|_0}\cdot f')
\end{align*}
where $h\in H$ is homogeneous and $f'\in \kk[H_1(M)]$, see (\ref{eq: alpha tensor f}). Then, a homomorphism $\rho:\pi_1(M,p)\to \Aut(H)$ can be combined with the Hurewicz map $h:\pi_1(M)\to H_1(M;\Z)$ to define a homomorphism
\begin{align*}
\rho\ot h:\pi_1(M,p)&\to \Aut(\HM)\\
\d & \mapsto  \rho(\d)\ot h(\d) .
\end{align*}
Note that though $\kk[H_1(M)]$ is not even necessarily a domain, $\HM$ has a two-sided cointegral and integral induced from that of $H$ (given by $\coint_{\HM}\eq\coint\ot 1$ and $\int_{\HM}\eq\int\ot\id_{\kk[H_1(M)]}$). %More precisely, if $j_H:H\to \HM$ is the inclusion map $x\mapsto x\ot 1$, then the cointegral of $\HM$ is $j_H(\coint)$ while the integral is $\int_{\HM}\eq\int\ot\id_{\kkM}:\HM\to\kkM$, where $\coint$ and $\int$ are the two-sided cointegral and integral of $H$ respectively. 
Thus, Theorem \ref{Theorem: I for sd product is Fox calculus} holds for the $\kkM$-linear Hopf superalgebra $\HM$, and we get an invariant $$I^{\rhoc}_{\HM}(M,\c,\ss,\o)\eq \d^{|\coint|}\cdot r_{\HM}\circ\rhoc (f_{\ss,\x})\cdot Z_{\HM}^{\rhoc}(\HH,\x)\in \kkM,$$
where $f_{\ss,\x}=PD[s(\x)-\ss]\in H_1(M)$ as usual. For simplicity of notation, we denote this just by $I_H^{\rhoc}(M,\c,\ss,\o)$ and denote $Z_{\HM}^{\rhoc}$ just by $Z_H^{\rhoc}$ as well. Here $Z_H^{\rho\ot h}(\HH)$ is obtained by contracting the same tensors associated to the $\a$'s and the crossings as before (since $\coint_{\HM}=\coint\ot 1$), only the $\b$-tensors contribute $H_1(M)$-coefficients coming from $\rho\ot h$. Since we are only considering automorphisms of $\HM$ of the form $\a\ot f$, by Lemma \ref{lemma: lHM homomorphism} we can write $I_H^{\rho\ot h}$ more explicitly as
$$I_H^{\rhoc}(M,\c,\ss,\o)=\d^{|\coint|}\cdot \lH\circ\rho(f_{\ss,\x})\cdot f_{\ss,\x}^{|\coint|_0}\cdot Z_H^{\rho\ot h}(\HH,\x).$$
Suppose now that $(M,\c)$ has disconnected $R=R_-(\c)$. Then our definition of $I_H^{\rhoc}$ is $$I_H^{\rho\ot h}(M,\c,\ss,\o)\eq I_H^{\rho'\ot h'}(M',\c',i(\ss),\o')$$ where $(M',\c')$ is a sutured manifold obtained by adding one-handles to $\c$ so that $R'=R_-(\c')$ is connected, $\rho':\pi_1(M')\to\Aut(H)$ is such that 
$\rho'\circ j_*=\rho$ where $j_*:\pi_1(M)\to\pi_1(M')$ is induced by inclusion, $i(\ss),\o'$ are as in Subsection \ref{subs: the disconnected case} and $h':\pi_1(M')\to H_1(M')$ is the Hurewicz map of $M'$. However, the right hand side belongs to $\kk[H_1(M')]$ and $H_1(M')=H_1(M)\oplus \Z^m$ where $m$ is the number of one-handles attached to $\c$. %We show now that indeed this belongs to $\kk[H_1(M)]$, or rather, in the image of the map $j_*:H_1(M)\to H_1(M')$ induced by inclusion.

\begin{lemma}
\label{lemma: disconnected case twisted poly belongs to H1M}
Let $(M,\c)$ be a connected balanced sutured manifold with disconnected $R_-(\c)$ and let $(M',\c'),\rho',i(\ss),\o',h'$ be as above. Then $I_H^{\rho'\ot h'}(M',\c',i(\ss),\o')\in\kk[H_1(M)].$
\end{lemma}
\begin{proof}
Let $R_0,\dots,R_{m}$ be the components of $R$. It suffices to prove the statement for a single $(M',\c')$ which we suppose is obtained by attaching a single one-handle between $R _0\t I$ and $R_{i}\t I$ for each $i=1,\dots,m$, where $I=[-1,1]$. We will show the lemma by finding an appropriate oriented, based Heegaard diagram of $(M',\c')$ for which $h'(\ov{\b}_x)\in H_1(M)$ for each $\b$ and each crossing $x$. To achieve this, let $\HH=\HDD$ be an extended Heegaard diagram of $(M,\c)$ where as usual $\aaa=\aa\cup\bolda$ and $\aa=\{\a_1,\dots,\a_d\}$. For each $i=1,\dots, d$ let $h_i$ be the one-handle attached to $R\t I$ associated to $\a_i$, so $h_i$ has belt circle $\a_i$. Since $M$ is connected, after sufficiently many handleslidings among the $h_i$'s, we can suppose that $h_i$ has one feet in $R_0\t \{1\}$ and the other in $R_{i}\t\{1\}$ for each $i=1,\dots,m$ and that $h_i$ is attached to $R_0\t\{1\}$ for each $i>m$. Thus, $\S\sm (\a_1\cup\dots\cup\a_m)$ has $m+1$ components $\S_0,\dots,\S_m$, where for each $i=1,\dots,m$, $\a_i$ has one side on $\S_0$ and the other on $\S_i$. Note that the $\a^*_i$ with $i>m$ together with the $\a^*,\a\in\bolda$ generate $H_1(M)$ while $\a^*_1,\dots,\a^*_m$ generate the $\Z^m$ factor in $H_1(M')=H_1(M)\oplus\Z^m$. We will give $\a_i$ the orientation induced from $\S_i$ for each $i=1,\dots,m$ and an arbitrary orientation for $i>m$. Let $\b\in\bb$ with an arbitrary orientation. By our choice of orientation of the $\a\in\aa$, if a crossing $x\in\a_i\cap\b$ is positive, then $\b$ is oriented from $\S_0$ to $\S_i$ near $x$ and oppositely if the crossing is negative. Moreover, since $\S\sm\a_i$ is disconnected and since $\S_i$ contains no $\a\in\aa$, if $\b$ intersects $\a_i$ positively at $x$, then the next intersection of $\b$ with $\aa$ (following the orientation of $\b$) is a negative intersection at $\a_i$. Now, suppose the basepoint of $\b$ lies in $\S_0$. Then, when following the orientation of $\b$, the first intersection of $\b$ with $\a_1\cup\dots\cup\a_m$ is positive, and each positive intersection with the same set is followed by a negative one (possibly after several intersections with $\bolda$). Hence, the appearances of $\a^*_1,\dots,\a^*_m$ in each $\ov{\b}_x$ cancel out in homology, so $h'(\ov{\b}_x)\in H_1(M)$ for each crossing $x$ through $\b$. In general, the basepoints of $\bb$ come from a multipoint $\x=\{x_1,\dots,x_d\}$ of $\HH$, say with $x_i\in\a_i\cap\b_i$ for each $i$, but with our choice of orientations of $\a_1,\dots,\a_m$ the induced basepoints on $\bb$ indeed lie in $\S_0$. Thus $h'(\ov{\b}_x)\in H_1(M)$ for each $\b\in\bb$ and each crossing $x$. Therefore $Z_H^{\rho'\ot h'}(\HH',\x)\in\kk[H_1(M)]$ where $\HH'$ is the Heegaard diagram of $(M',\c')$ obtained from $\HH$ by gluing the corresponding 2-dimensional one-handles to $\S$. On the other hand, the vertical extension map $i:\Spinc(M,\c)\to\Spinc(M',\c')$ satisfies $s'(\x)=i(s(\x))$ where $s'$ is the map from multipoints to $\Spinc$ of $M'$ so $$PD[s'(\x)-i(\ss)]=j_*(PD[s(\x)-\ss])$$ lies in the image of the map $j_*:H_1(M)\to H_1(M')$ induced by inclusion. It follows that $I_H^{\rho'\ot h'}(M',\c',i(\ss),\o')\in\kk[H_1(M)]$ as desired.

\end{proof}

\begin{definition}
Let $(M,\c)$ be an arbitrary balanced sutured 3-manifold and let $\rho:\pi_1(M,p)\to\Aut(H), \ss\in\Spinc(M,\c)$ and $\o$ be as usual and $h:\pi_1(M)\to H_1(M)$ be the Hurewicz map. We call $I^{\rhoc}_{H}(M,\c,\ss,\o)\in \kkM$ the {\em twisted Kuperberg polynomial} of $(M,\c)$ with respect to $\rho$.
\end{definition}

%When $(M,\c)$ is the sutured complement of a knot in an homology 3-sphere $Y$, then we can canonically identify $\kk[H_1(M)]\cong\kk[t^{\pm 1}]$ provided an orientation of the knot has been chosen. This is why we say these invariants are polynomials, though $\kk[H_1(M)]$ may not be a polynomial ring in general. Note that if $(M,\c)$ is a knot complement as above, then\begin{align*}I^{1\ot h}_H(M,\c,\ss,\o)|_{t=1}=I_H^{Kup}(Y)\end{align*}where $I_H^{Kup}(Y)$ is the (involutory) Kuperberg invariant of the underlying 3-manifold, see Remark \ref{remark: twisted Kup at trivial rho}. Thus, in the case of knots, our procedure can be considered as a ``deformation" of Kuperberg's 3-manifold invariant.

\begin{example}
Consider the left trefoil complement as in Example \ref{example: left trefoil} and let $\rho\equiv 1$. Let $t$ be a generator of $H_1(M)\cong\Z$. Then, for the diagram of Figure \ref{fig: dual curves in pi1}, the twisted Kuperberg polynomial is given by 
\begin{align*}
Z_H^{1\ot h}(\HH)=\int(t^{|\coint_{(1)}|_0}\coint_{(1)}\cdot S(\coint_{(2)})\cdot t^{-|\coint_{(3)}|_0}\coint_{(3)})
\end{align*}
where this time we have used Sweedler's notation for the coproduct, that is, we write $\De(x)=x_{(1)}\ot x_{(2)}$, $(\De\ot \id_H)\De(x)=x_{(1)}\ot x_{(2)}\ot x_{(3)}$, etc. for each $x\in H$. As before, this recovers the Alexander polynomial of the trefoil knot if $H$ is set to be an exterior algebra on one generator of degree one.
\end{example}

Recall that the augmentation map $\aug\colon \kkM\to\kk$ is the $\kk$-linear map defined by $\aug(f)=1$ for all $f\in H_1(M)$. Then we have the following.

\begin{proposition}
\label{prop: aug of twisted Kup poly}
The twisted Kuperberg polynomial satisfies
\begin{align*}
\aug(I^{\rhoc}_{H}(M,\c,\ss,\o))=I_H^{\rho}(M,\c,\ss,\o).
\end{align*}
\end{proposition}
\begin{proof}
Set $\aug_H\eq \id_H\ot\aug\colon\HM\to H$. It is easy to see that $\aug\circ\int_{\HM}=\int\circ \aug_H$ and $\aug_H\circ[(\rhoc)(\d)]\circ j_H=\rho(\d)$ for any $\d\in\pi_1(M)$, where $j_H:H\to\HM, x\mapsto x\ot 1$ is the inclusion. Since the cointegral of $\HM$ is $j_H(\coint)$, where $\coint$ is the cointegral of $H$, it follows that $\aug(Z^{\rhoc}_{\HM}(\HH))=\ZHrho(\HH)$ where $\HH$ is a Heegaard diagram of $(M,\c)$. From this the result follows.
\end{proof}

\def\lHM{r_{\HM}}

Now let $L$ be an ordered oriented $m$-component link in an homology sphere $Y$ and let $(M_L,\c_L)$ be the sutured manifold complement. The ordering and the orientation of $L$ allows to identify $\kk[H_1(M_L)]=\kk[t_1^{\pm 1},\dots, t_m^{\pm 1}]$ canonically.

%MOREOVER: CAN DROP HOMOLOGY OR, SPINC DEPENDENCE?

\begin{corollary}
\label{corollary: multivariable link polynomial}
Let $L$ be an ordered oriented $m$-component link in an homology sphere $Y$. Then $I_H^{\rho\ot h}(M_L,\c_L,\ss,\o)$ is a multivariable polynomial invariant of $L$ belonging to $\kk[t_1^{\pm 1},\dots, t_m^{\pm 1}]$. This satisfies that $$\aug(I_H^{\rho\ot h}(M_L,\c_L,\ss,\o))=I_H^{\rho}(M_L,\c_L,\ss,\o).$$ 
In particular, for $\rho\equiv 1$ we have $$\aug(I_H^{h}(M_L,\c_L,\ss,\o))=I_H^{Kup}(Y).$$
\end{corollary}
\begin{proof}
That $I_H^{\rhoc}(M_L,\c_L,\ss,\o)$ belongs to $\kk[t_1^{\pm 1},\dots, t_m^{\pm 1}]$ is a consequence of Lemma \ref{lemma: disconnected case twisted poly belongs to H1M}. The rest follows from the above proposition together with Remark \ref{remark: twisted Kup at trivial rho}.
\end{proof}

\subsection{Reidemeister torsion as a Kuperberg invariant}\label{subs: computation of ZHn} We now show Theorem \ref{Theorem: I at exterior is torsion}. This follows essentially from Proposition \ref{theorem: Z via Fox calculus} below. So let $(M,\c)$ be a balanced sutured 3-manifold with connected $R_-(\c)$, $p\in s(\c)$ and let $\rho:\pi_1(M,p)\to GL(V)$ be a homomorphism, where $V$ is a finite dimensional vector space over a field $\kk$. Let $\HH=\HDD$ be an extended Heegaard diagram of $(M,\c)$ which is ordered, oriented and based. This determines a presentation of $\pi_1(M,p)$ as in (\ref{eq: pres of pi1}). As usual, let $\aaa=\aa\cup\bolda$, $d=|\aa|=|\bb|$ and $\aa=\{\a_1,\dots,\a_d\}$.
\medskip

\begin{proposition}
\label{theorem: Z via Fox calculus}
If $\La(V)$ is the exterior Hopf algebra over $V$, then
\begin{align*}
Z^{\rho}_{\LaV}(\HH)=\det\left(\rho\left(\frac{\p\ov{\b}_i}{\p\a^*_j}\right)\right)_{i,j=1,\dots,d}\in\kk.
\end{align*}
The same formula is valid for $\rho\ot h$ as well.
%In particular, if $F_M\eq H_1(M)/\Tors H_1(M)$ and $h:\pi_1(M)\to F_M$ is the projection, then\begin{align*}Z_{\LaV\ot\kk[F_M]}^{\rhoc}(M,\c)\dot{=}\tau^{(\rhoc)^{-t}}(M,R_-)\in \kk[F_M]\end{align*}where $\dot{=}$ denotes equality up to a unit of $\kk[F_M]$.
\end{proposition}

This follows from the following simple observations on exterior algebras: \begin{enumerate}
\item $\La(V)$ is $\N$-graded and super-commutative,
\item $\La(V)$ is a universal enveloping algebra, that is, algebra maps $\La(V)\to A$ into some superalgebra $A$ over $\kk$ are in one-to-one correspondence (via restriction) with Lie superalgebra maps $V\to A$ ($V$ has the trivial Lie algebra structure),
\item \label{eq: det} If $T:V\to V$ is a linear map, then the induced algebra morphism $\La(T):\La(V)\to\La(V)$ satisfies $$\det(T)=\int(\La(T)(\coint))$$
where $\int$ and $\coint$ denote the integral and cointegral of $\La(V)$ normalized by $\int(\coint)=1$.
\end{enumerate}

\begin{proof}[Proof of Proposition \ref{theorem: Z via Fox calculus}]
Recall that $Z_{\La(V)}^{\rho}(\HH)$ is defined as $$Z_{\La(V)}^{\rho}(\HH)=\int^{\ot d}(K_{\La(V)}^{\rho}(\HH)(\coint^{\ot d}))$$
where $K_{\La(V)}^{\rho}(\HH):\La(V)^{\ot d}\to \La(V)^{\ot d}$ is the twisted Kuperberg tensor. Note that $\La(V)^{\ot d}$ is naturally isomorphic to $\La(V^{\oplus d})$ and that $\coint^{\ot d},\int^{\ot d}$ are respectively a cointegral and integral for $\La(V^{\oplus d})$. It is easy to see that $K_{\La(V)}^{\rho}$ preserves the $\N$-grading of $\La(V)$, since it is a composition of degree-preserving maps. Now, by definition, $K_{\La(V)}^{\rho}$ is a composition of coproducts, antipodes, automorphisms of $\La(V)$ and the multiplication map. Since $\La(V)$ is commutative, all these are algebra maps, hence $K_{\La(V)}^{\rho}$ is also an algebra map. By the universal enveloping algebra property, $K_{\La(V)}^{\rho}$ is determined over $V^{\oplus d}$, and since it preserves the $\N$-grading, the restriction to $V^{\oplus d}$ is a linear map $T:V^{\oplus d}\to V^{\oplus d}$ so $K^{\rho}_{\La(V)}(\HH)=\La(T)$ and $$Z^{\rho}_{\La(V)}(\HH)=\det(T)$$ by property (\ref{eq: det}) above. Now, by definition of $K^{\rho}_{\La(V)}$, one easily sees that the map $T$ is represented by the matrix $(\rho(\p\ov{\b_i}/\p\a^*_j))$, which implies the proposition. The assertion for $\rho\ot h$ follows from its definition and the fact that $V$ is concentrated in degree one.

\end{proof}

\begin{proof}[Proof of Theorem \ref{Theorem: I at exterior is torsion}]
Suppose first that $R_-(\c)$ is connected.
Note that the inverse-transpose  satisfies 
$\rho^{-T}(\s(x))=\rho(x)^T$ for any $x\in\Z[\pi]$, where recall that $\s:\Z[\pi]\to\Z[\pi]$ is the map defined by $\s(g)=g^{-1}$ for $g\in\pi$. Recall also that $f_{\ss,\x}\eq PD[s(\x)-\ss]$, that $|\coint|=n \pmod 2$ if $\dim(V)=n$ and $\coint$ is the cointegral of $\La(V)$, and that $r_{\La(V)}=\det$. Thus, we get
\begin{align*}
I^{\rho}_{\LaV}(M,\c,\ss,\o)&\eq\d^{|\coint|}\cdot r_{\La(V)}\circ\rho(f_{\ss,\x})\cdot Z^{\rho}_{\LaV}(\HH,\x)\\
&=\d^n\cdot\det\circ\rho(f_{\ss,\x})\cdot\det(\rho(\p\ov{\b}_i/\p\a^*_j))\\
&=\d^n\cdot\det\circ\rho(f_{\ss,\x})\cdot\det(\rho(\p \ov{\b}_j/\p\a^*_i)^T)\\
&=\d^n\cdot\det\circ\rho(f_{\ss,\x})\cdot\det(\rho^{-T}(\s(\p \ov{\b}_j/\p\a^*_i)))\\
&=\d^n\cdot\det\circ\rho^{-T}(PD[\ss-s(\x)])\cdot\det(\rho^{-T}(\s(\p \ov{\b}_j/\p\a^*_i)))\\
&=\ovtau^{\rho^{-T}}(M,\c,\ss,\o)
\end{align*}
where we use Proposition \ref{theorem: Z via Fox calculus} in the second equality and the last equality is by definition of $\ovtau$ (which is itself based on Proposition \ref{prop: torsion via FOX calculus}). The case when $R_-(\c)$ is disconnected follows from the connected case by the definition of $\ovtau$, see Subsection \ref{subs: the disconnected case}. The assertion for $I^{\rhoc}_{\La(V)}$ follows from the above since clearly $\tau_0^{(\rhoc)^{-T}}=\tau_0^{\rho^{-T}\ot h^{-1}}=\s(\tau_0^{\rho^{-T}\ot h})$ for any $(M,\c,\ss,\o)$. %Corollary \ref{Corollary: I at ext of link is twisted Alex poly} follows directly from this theorem together with \ref{corollary: sutured torsion recovers TWISTED ALEX}.
\end{proof}

Note that if we had used the convention that $(g\cdot c)\ot v=c\ot (\rho(g)^t(v))$ for the tensor product $C_*(\wt{M})\ot_{\Z[\pi]}V$ (as in \cite{Porti:survey}), then $I^{\rho}_{\LaV}$ would be exactly $\ovtau^{\rho}$. Note also that Corollary \ref{Corollary: I at ext of link is twisted Alex poly} follows directly from the above theorem together with Corollary \ref{corollary: sutured torsion recovers TWISTED ALEX}.

\subsection{Going beyond Reidemeister torsion?}
\label{sect: going beyond R torsion}

We now address the question of whether invariants different than Reidemeister torsion can be found within the present framework. Recall that we suppose $H$ is an involutory Hopf superalgebra over a field $\kk$ whose cointegral and integral are two-sided. We will further assume that $\char(\kk)=0$. It turns out that the involutory condition imposes serious restrictions. %, we study this separately according to the characteristic of $\kk$.

%\subsection{involutory Hopf superalgebras}\label{subs: involutory Hopf} We now introduce and discuss the main condition we require on our Hopf algebras in order to produce (unframed) sutured 3-manifold invariants.
\medskip

Suppose first that we seek for involutory Hopf algebras (i.e. without mod 2 grading). Then we are constrained by the following theorem of Larson-Radford \cite{LR:cosemisimplechar0, LR:semisimple}: a finite dimensional (ungraded) Hopf algebra over a field $\kk$ of characteristic zero is involutory if and only if it is semisimple and cosemisimple. As far as the author knows, all finite dimensional semisimple Hopf algebras over a field of characteristic zero are built from group algebras in some way. Therefore, one may expect that such Hopf algebras only lead to invariants related to a count of homomorphisms $\pi_1(M)\to G$. 

\medskip
Fortunately, Larson-Radford's theorem is no longer true for Hopf {\em superalgebras}, indeed, the opposite holds \cite[Corollary 3.1.2]{AEG:Triangular}: if $H=H_0\oplus H_1$ is a finite dimensional involutory Hopf superalgebra over a field of characteristic zero, then $H_1\neq 0$ if and only if $H$ is non-semisimple. So, suppose we seek for involutory Hopf superalgebras (with $H_1\neq 0$ and $\char(\kk)=0$). Suppose we add the stronger condition of cocommutativity (which immediately implies involutority). Then, we are constrained by the classical theorem of Cartier-Kostant-Milnor-Moore (see e.g. \cite[Theorem 2.3.4]{AEG:Triangular}), which %or \cite[Theorem 3.3]{Kostant:graded}) %Itself the latter cites Sweedler's book % or \cite[Theorem 3.8.2]{Cartier:Primer}) %THIS IS UNGRADED CASE) 
says that $H$ is isomorphic, as a Hopf superalgebra, to a semidirect product:
\begin{align*}
H\cong\kk[G(H)]\ltimes U(P(H)).
\end{align*}
Here $G(H)$ is the group of {\em group-likes} of $H$, that is, the elements that satisfy $\De(g)=g\ot g$ and $\e(g)=1$, $P(H)$ is the Lie algebra of primitive elements, i.e. those $h\in H$ satisfying $\De(h)=1\ot h+h\ot 1$ and $U(P(H))$ is the universal enveloping algebra of $P(H)$. %It is easy to see that $P(H)$ is a Lie subalgebra of $H$ for the bracket $[x,y]\eq xy-(-1)^{|x||y|}yx$ where $x,y\in H$ and that $G(H)$ acts on $P(H)$ by conjugation. In the above, $U(P(H))$ is the universal enveloping algebra of the Lie superalgebra $P(H)$. 
Now, if we wish $U(P(H))$ to be finite dimensional, then $P(H)$ is forced to be an abelian Lie algebra concentrated in degree one, so $U(P(H))$ is just an exterior algebra. Therefore, a finite dimensional cocommutative Hopf superalgebra (over a characteristic zero field) is necessarily of the form $\kk[G]\ltimes \La(V)$ for some finite subgroup $G\sb GL(V)$. Thus, we get nothing beyond torsion with such Hopf superalgebras.
\medskip

In view of the above, if one wishes to find an example of a finite dimensional involutory Hopf superalgebra considerably different from group algebras, exterior algebras and their semidirect products, then one has to look for non-commutative and non-cocommutative Hopf superalgebras. The quantum group $\Uqgl11$ at a root of unity (see e.g. \cite{Sartori:Alexander}) is such an example, for another one see \cite[Example 5.6]{KV:generalized}. However, the cointegrals and integrals of such are not two-sided, so they don't fit in the present framework. Still, these Hopf superalgebras seem to be obtained from exterior algebras somehow, for instance, $\Uqgl11$ (at a root of unity of order $n$) is the Drinfeld double of a semidirect product $\kk[\Z/n\Z]\ltimes\La(\kk)$. Therefore, it seems difficult to expect something beyond torsion with such examples.
\medskip

It has to be noted that more interesting involutory examples can be found in positive characteristic. For instance, for every restricted Lie algebra $\gg$ there is an associated restricted universal enveloping algebra $U^{res}(\gg)$ which is cocommutative (hence involutory) and is of finite dimension if $\gg$ is. Whether such Hopf algebras lead to interesting invariants will be the subject of future work.
\medskip

\section{Fox calculus via semidirect products}
\label{section: Fox calculus and semidirect products}

In this section we explain where the Fox calculus formula for $I_H^{\rho}(M,\c,\ss,\o)$ comes from. We begin in Subsection \ref{subs: invs from relative int} by briefly recalling the setting of \cite{LN:Kup}. In Subsection \ref{subs: Proof of Thm 1} we show that a semidirect product $\kk[\Aut(H)]\ltimes H$ fits into this setting and we prove Theorem \ref{Theorem: I for sd product is Fox calculus}. In Subsection \ref{subs: Kup for Hopf G-algebra} we give an explanation of our invariant in terms of the more common notion of a Hopf group-algebra, therefore relating it to work of Virelizier \cite{Virelizier:flat}. Furthermore, we discuss why we don't use more general Hopf group-algebras. Finally, in Subsection \ref{subs: rel int and Hopf G-alg}, we discuss the relation between both approaches. %, and indicate some possible further generalizations.

\subsection{Invariants from relative integrals}
\label{subs: invs from relative int}

\def\iotal{\iota_l}\def\iotar{\iota_r}\def\wtmu{\wt{\int}}\def\wtI{\wt{I}}\def\wtZ{\wt{Z}}

%We now show how the formulas of Subsections \ref{subs: tensors and HDs} and \ref{subs: general definition w Spinc and hom or} are a particular case of our previous work \cite{LN:Kup} restricted to a semidirect product $\wt{H}=\kk[\Aut(H)]\ltimes H$. The essential point is that the semidirect product structure of $\wt{H}$ reduces to the Fox calculus-like formulas given before.\medskip

We begin by briefly recalling the setting of \cite{LN:Kup}. There, we also built a Kuperberg-style invariant of balanced sutured 3-manifolds, but with a more involved and general algebraic input. This consisted of a (possibly infinite-dimensional) involutory Hopf superalgebra $\wt{H}$ endowed with relative versions of the Hopf algebra cointegral and integral, satisfying certain conditions. More precisely, we defined a {\em right relative cointegral} as a tuple $(A,\pi_A,\iotar)$ where $A$ is a Hopf subalgebra of $\wt{H}$, $\pi_A:\wt{H}\to A$ is a cocentral Hopf morphism with $\pi_A|_A=\id_A$ (where cocentral means $(\pi_A\ot \id_{\wt{H}})\De_{\wt{H}}=(\pi_A\ot \id_{\wt{H}})\De^{op}_{\wt{H}}$) and $\iotar:A\to \wt{H}$ is a left $A$-comodule map (where $\wt{H}$ is a left $A$-comodule via $\pi_A$) that satisfies the following relative version of the right cointegral equation: %$$\iotar(a)\cdot\wt{h}=\iotar(a\cdot\pi_A(\wt{h}))$$
\begin{figure}[H]
\centering
\begin{pspicture}(0,-0.765)(5.7,0.765)
\psline[linecolor=black, linewidth=0.018, arrowsize=0.05291667cm 2.0,arrowlength=0.8,arrowinset=0.2]{->}(2.1,0.335)(2.4,0.335)
\rput[bl](1.34,0.125){$m_{\widetilde{H}}$}
\psline[linecolor=black, linewidth=0.018, arrowsize=0.05291667cm 2.0,arrowlength=0.8,arrowinset=0.2]{->}(0.0,0.335)(0.3,0.335)
\psline[linecolor=black, linewidth=0.018, arrowsize=0.05291667cm 2.0,arrowlength=0.8,arrowinset=0.2]{<-}(1.6,0.035)(1.6,-0.265)
\rput[bl](2.8,0.095){=}
\rput[bl](0.46,0.265){$\iota_r$}
\psline[linecolor=black, linewidth=0.018, arrowsize=0.05291667cm 2.0,arrowlength=0.8,arrowinset=0.2]{->}(0.9,0.335)(1.2,0.335)
\rput[bl](3.84,0.525){$m_A$}
\psline[linecolor=black, linewidth=0.018, arrowsize=0.05291667cm 2.0,arrowlength=0.8,arrowinset=0.2]{->}(3.4,0.635)(3.7,0.635)
\psline[linecolor=black, linewidth=0.018, arrowsize=0.05291667cm 2.0,arrowlength=0.8,arrowinset=0.2]{->}(4.5,0.635)(4.8,0.635)
\rput[bl](4.96,0.565){$\iota_r$}
\psline[linecolor=black, linewidth=0.018, arrowsize=0.05291667cm 2.0,arrowlength=0.8,arrowinset=0.2]{->}(5.4,0.635)(5.7,0.635)
\psline[linecolor=black, linewidth=0.018, arrowsize=0.05291667cm 2.0,arrowlength=0.8,arrowinset=0.2]{<-}(4.0,-0.465)(4.0,-0.765)
\psline[linecolor=black, linewidth=0.018, arrowsize=0.05291667cm 2.0,arrowlength=0.8,arrowinset=0.2]{<-}(4.0,0.435)(4.0,0.135)
\rput[bl](3.81,-0.265){$\pi_A$}
\rput[bl](5.65,0.085){.}
\end{pspicture}

\end{figure}
\noindent Moreover, $\wt{H}$ is endowed with a right relative integral, which is the notion dual to the above one, relative to a central Hopf subalgebra $B\sb \wt{H}$. Here we will only consider the case $B=\kk$, so a relative integral is just an ordinary integral $\wtmu:\wt{H}\to\kk$. We further required the existence of two distinguished group-likes $a^*\in \Hom_{alg}(A,\kk)$ and $b\in G(B)$, so $b=1$ if we assume $B=\kk$. The group-like $a^*$ must satisfy 
\begin{align*}
\De_{a^*}\circ\iotar&=(-1)^{|\iota|}S_{\wt{H}}\circ\iota_r\circ S_A & \text{ and } & &
\wtmu\circ m_{\wt{H}}^{op}&=\wtmu\circ m_{\wt{H}}\circ(\id_{\wt{H}}\ot\De_{a^*}),
\end{align*}
%$$\De_{a^*}\circ\iotar=(-1)^{|\iota|}S_{\wt{H}}\circ\iota_r\circ S_A,$$ and $$\wtmu\circ m_{\wt{H}}^{op}=\wtmu\circ m_{\wt{H}}\circ(\id_{\wt{H}}\ot\De_{a^*}),$$
where $\De_{a^*}:\wt{H}\to\wt{H}$ is given by $\De_{a^*}\eq(a^*\pi_A\ot \id_{\wt{H}})\circ\De_{\wt{H}}$. Since $b=1$, the dual condition becomes 
\begin{align*}
\wtmu\circ S_{\wt{H}}&=(-1)^{|\wtmu|}\wtmu & \text{ and } & & \De_{\wt{H}}^{op}\circ\iota_r&=\De_{\wt{H}}\circ\iota.
\end{align*}
%$\De_{\wt{H}}^{op}\circ\iota_r=\De_{\wt{H}}\circ\iota$ and $\wtmu\circ S_{\wt{H}}=(-1)^{|\wtmu|}\wtmu$. 
In addition, we assume $\wtmu(\iota_r(1_A))=1$.
\medskip

Using such structures we defined a topological invariant $\wtI_{\wt{H}}^{\rho}(M,\c,\ss,\o)$ where $(M,\c),\ss,\o$ are as in the present paper and $\rho$ is a homomorphism from $H_1(M)$ into the group of group-likes $G(A\ot B^*)$ ($=G(A)$ since here we assume $B=\kk$). The construction also holds for arbitrary homomorphisms $\rho:\pi_1(M)\to G(A)$, the necessary extra details will be carried out in Appendix \ref{subs: non-abelian case} (we will do this only for the semidirect product case). When $R_-(\c)$ is connected, this invariant was built as follows: pick an ordered, oriented, based extended Heegaard diagram $\HH=(\S,\aaa,\bb)$ of $(M,\c)$. Now we require that $\HH$ is both $\aa$-based and $\bb$-based. Then to each $\a\in\aaa$ we associate the tensor

%We will always suppose that $\aaa$ and $\bbb$ are transversal as submanifolds of $\S$.

\begin{figure}[H]
\centering

\begin{pspicture}(0,-0.5786306)(9.77,0.5786306)
\psline[linecolor=black, linewidth=0.018, arrowsize=0.05291667cm 2.0,arrowlength=0.8,arrowinset=0.2]{->}(2.02,0.0213694)(2.42,0.0213694)
\rput[bl](2.59,-0.1586306){$\Delta_{\wt{H}}$}
\psline[linecolor=black, linewidth=0.018, arrowsize=0.05291667cm 2.0,arrowlength=0.8,arrowinset=0.2]{->}(3.22,0.2213694)(3.62,0.6213694)
\psline[linecolor=black, linewidth=0.018, arrowsize=0.05291667cm 2.0,arrowlength=0.8,arrowinset=0.2]{->}(3.22,-0.1786306)(3.62,-0.5786306)
\rput[bl](3.48,-0.2186306){$\vdots$}
\rput[bl](0.0,-0.1586306){$\rho(\alpha^*)$}
\psline[linecolor=black, linewidth=0.018, arrowsize=0.05291667cm 2.0,arrowlength=0.8,arrowinset=0.2]{->}(3.22,0.1213694)(3.62,0.3213694)
\psline[linecolor=black, linewidth=0.018, arrowsize=0.05291667cm 2.0,arrowlength=0.8,arrowinset=0.2]{->}(1.02,0.0213694)(1.42,0.0213694)
\rput[bl](1.66,-0.1586306){$\iotar$}
\psline[linecolor=black, linewidth=0.018, arrowsize=0.05291667cm 2.0,arrowlength=0.8,arrowinset=0.2]{->}(7.82,0.0213694)(8.22,0.0213694)
\rput[bl](8.4,-0.1586306){$\Delta_{\wt{H}}$}
\psline[linecolor=black, linewidth=0.018, arrowsize=0.05291667cm 2.0,arrowlength=0.8,arrowinset=0.2]{->}(9.02,0.2213694)(9.42,0.6213694)
\psline[linecolor=black, linewidth=0.018, arrowsize=0.05291667cm 2.0,arrowlength=0.8,arrowinset=0.2]{->}(9.02,-0.1786306)(9.42,-0.5786306)
\rput[bl](9.28,-0.2186306){$\vdots$}
\rput[bl](5.8,-0.1586306){$\rho(\alpha^*)$}
\psline[linecolor=black, linewidth=0.018, arrowsize=0.05291667cm 2.0,arrowlength=0.8,arrowinset=0.2]{->}(9.02,0.1213694)(9.42,0.3213694)
\psline[linecolor=black, linewidth=0.018, arrowsize=0.05291667cm 2.0,arrowlength=0.8,arrowinset=0.2]{->}(6.82,0.0213694)(7.22,0.0213694)
\rput[bl](7.36,-0.1586306){$i_A$}
\rput[bl](4.62,-0.0786306){\text{or}}
%\rput[bl](9.72,-0.5786306){.}
\end{pspicture}
\end{figure}
\noindent depending on whether $\a$ is a closed curve or an arc. Here $i_A:A\to\wt{H}$ is the inclusion and the outcoming legs of the iterated coproduct correspond to the crossings through $\a$, starting from its basepoint and following its orientation. Since we consider $B=\kk$, to each $\b\in\bb$ we associate the tensor

\begin{figure}[H]
\centering

\begin{pspicture}(0,-0.6091925)(1.759192,0.6091925)
\psline[linecolor=black, linewidth=0.018, arrowsize=0.05291667cm 2.0,arrowlength=0.8,arrowinset=0.2]{->}(0.009192505,-0.6)(0.4091925,-0.2)
\psline[linecolor=black, linewidth=0.018, arrowsize=0.05291667cm 2.0,arrowlength=0.8,arrowinset=0.2]{->}(0.009192505,0.6)(0.4091925,0.2)
\rput[bl](0.06919251,-0.24){$\vdots$}
\rput[bl](0.5191925,-0.13){$m_{\wt{H}}$}
\psline[linecolor=black, linewidth=0.018, arrowsize=0.05291667cm 2.0,arrowlength=0.8,arrowinset=0.2]{->}(1.2091925,0.0)(1.6091925,0.0)
\psline[linecolor=black, linewidth=0.018, arrowsize=0.05291667cm 2.0,arrowlength=0.8,arrowinset=0.2]{->}(0.009192505,0.3)(0.4091925,0.1)
\rput[bl](1.7991925,-0.13){$\wtmu$}

%\rput[bl](9.709192,-0.5){.}
\end{pspicture}
\end{figure}

\noindent where the incoming legs correspond to the crossings through $\b$ as usual. Finally, to each crossing $x\in\aaa\cap\bb$ we associate $S_{\wt{H}}^{\e_x}$ (where, as before, $\e_x\in\{0,1\}$ is defined by $(-1)^{\e_x}=m_x$ and $m_x$ is the intersection sign at $x$). The contraction of all these tensors is denoted by $\wtZ_{\wt{H}}^{\rho}(\HH)\in\kk$ or $\wtZ_{\wt{H}}^{\rho}(\HH,\x)$ if the basepoints of $\aa$ and $\bb$ come from a multipoint $\x$. Here the rule of Definition \ref{def: basepoints from multipoint} is extended in an obvious way to $\aa$: if the crossing $x_i$ is positive (resp. negative), we put a basepoint on $\a_i$ just before (resp. after) $x_i$ when following the orientation of $\a_i$. The invariant $\wtI_{\wt{H}}^{\rho}(M,\c,\ss,\o)$ of \cite{LN:Kup} is then defined by 
\begin{align*}
\wtI_{\wt{H}}^{\rho}(M,\c,\ss,\o)=\d^{|\iota|}\cdot\lb a^*,\rho(PD[s(\x)-\ss])\rb\cdot \wtZ_{\wt{H}}^{\rho}(\HH,\x)
\end{align*}
where $\d$ is the sign characterized by $o(\HH)=\d\o$ as before and $a^*\in\Hom_{alg}(A,\kk)$ is the above distinguished group-like. This definition extends to the case of disconnected $R_-(\c)$ by using $(M',\c'),\rho',i(\ss),\o'$ as in Subsection \ref{subs: the disconnected case}.

\medskip

\subsection{Proof of Theorem \ref{Theorem: I for sd product is Fox calculus}}\label{subs: Proof of Thm 1} We begin by showing that semidirect products fit into the preceding framework, hence they define invariants of balanced sutured 3-manifolds.

 %More precisely, let $A$ be the group algebra $\kk[\Aut(H)]$ (considered as a Hopf algebra), and let $\wt{H}\eq A\ot H$ as coalgebras. Then $\wt{H}$ is a Hopf superalgebra with multiplication defined by\begin{align*}(\a_1\ot h_1)\cdot (\a_2\ot h_2)\eq\a_1 \a_2\ot \a_2^{-1}(h_1)h_2\end{align*}for any $\a_1,\a_2\in \Aut(H)$ and $h_1,h_2\in H$. With this Hopf superalgebra structure, we denote $\wt{H}=A\ltimes H$.

\begin{lemma}
Let $H$ be a finite dimensional Hopf superalgebra with a two-sided cointegral $\coint$ and a two-sided integral $\int$, normalized by $\int(\coint)=1$, and let $A=\kk[\Aut(H)]$. Then $\wt{H}=A\ltimes H$ has a right relative cointegral over $A$ given by $\pi_A=\id_A\ot\e_H$ and $\iotar(\a)\eq \coint\cdot\a=\lH(\a)^{-1}\a\ot\coint$ for each $\a\in \Aut(H)$. The distinguished group-like $a^*\in G(A)$ is given by $a^*(\a)\eq \lH(\a)$ for each $\a\in\Aut(H)$. The integral is given by $\wt{\int}(\a\ot h)=\d_{\a,\id_H}\int(h)$ for any $\a\in\Aut(H)$ and $h\in H$.
\end{lemma}

\begin{proof}
That $\iota_r$ is a relative cointegral and $\wtmu$ is an integral is easy to check. That these satisfy the conditions stated in Subsection \ref{subs: invs from relative int} follows from the fact that the cointegral and integral of $H$ are assumed to be two-sided. For instance, we have $$\De_{a^*}\circ\iota_r(\a)=a^*(\pi_A(\coint_{(1)}\cdot\a))\coint_{(2)}\cdot\a=\lH(\a)\e(\coint_{(1)})\coint_{(2)}\cdot\a=\lH(\a)\coint\cdot\a$$
while $$(-1)^{|\iota_r|}S_{\wt{H}}\iota S_A(\a)=(-1)^{|\coint|}S_{\wt{H}}(\coint\cdot\a^{-1})=(-1)^{|\coint|}\a\cdot S(\coint)=\a\cdot \coint=\lH(\a)\coint\cdot\a$$
where we used that $S(\coint)=(-1)^{|\coint|}\coint$, which is equivalent to say that $\coint$ is two-sided.
\end{proof}

\begin{proof}[Proof of Theorem \ref{Theorem: I for sd product is Fox calculus}]
Let $\wt{H}=\kk[\Aut(H)]\ltimes H$. We begin by showing that
\begin{align*}
\wtZ_{\wt{H}}^{\rho}(\HH)=Z_H^{\rho}(\HH)
\end{align*}
for any ordered, oriented, based extended Heegaard diagram of a balanced sutured manifold $(M,\c)$ (with connected $R_-(\c)$), where the right hand side is the Fox calculus tensor of Subsection \ref{subs: tensors and HDs}. This follows by writing the contraction of the tensors of $\wt{H}$ above in terms of the tensors of $H$. Indeed, for the semidirect product Hopf algebra $\wt{H}$, the $\wt{H}$-tensor associated to a closed $\a$ is 

\begin{figure}[H]
\centering
\begin{pspicture}(0,-1.08)(8.77,1.08)
\psline[linecolor=black, linewidth=0.018, arrowsize=0.05291667cm 2.0,arrowlength=0.8,arrowinset=0.2]{->}(1.42,0.3)(1.82,0.3)
\rput[bl](1.99,0.16){$\Delta_{\widetilde{H}}$}
\psline[linecolor=black, linewidth=0.018, arrowsize=0.05291667cm 2.0,arrowlength=0.8,arrowinset=0.2]{->}(2.62,0.5)(3.02,0.9)
\psline[linecolor=black, linewidth=0.018, arrowsize=0.05291667cm 2.0,arrowlength=0.8,arrowinset=0.2]{->}(2.62,0.1)(3.02,-0.3)
\rput[bl](2.88,0.06){$\vdots$}
\rput[bl](0.0,0.22){$\alpha$}
\psline[linecolor=black, linewidth=0.018, arrowsize=0.05291667cm 2.0,arrowlength=0.8,arrowinset=0.2]{->}(2.62,0.4)(3.02,0.6)
\psline[linecolor=black, linewidth=0.018, arrowsize=0.05291667cm 2.0,arrowlength=0.8,arrowinset=0.2]{->}(0.42,0.3)(0.82,0.3)
\rput[bl](1.06,0.24){$\iotar$}
\rput[bl](3.47,0.3){=}
\psline[linecolor=black, linewidth=0.018, arrowsize=0.05291667cm 2.0,arrowlength=0.8,arrowinset=0.2]{->}(4.52,0.3)(4.92,0.3)
\rput[bl](5.09,0.16){$\Delta_H$}
\psline[linecolor=black, linewidth=0.018, arrowsize=0.05291667cm 2.0,arrowlength=0.8,arrowinset=0.2]{->}(5.72,0.5)(6.12,0.9)
\psline[linecolor=black, linewidth=0.018, arrowsize=0.05291667cm 2.0,arrowlength=0.8,arrowinset=0.2]{->}(5.72,0.1)(6.12,-0.3)
\rput[bl](4.16,0.24){$\coint$}
\rput[bl](7.1,0.12){$\alpha$}
\psline[linecolor=black, linewidth=0.018, arrowsize=0.05291667cm 2.0,arrowlength=0.8,arrowinset=0.2]{->}(7.32,0.4)(7.52,0.7)
\rput[bl](8.72,-0.3){.}
\rput[bl](7.42,0.75){$m_{\widetilde{H}}$}
\rput[bl](7.1,-1.08){$\alpha$}
\psline[linecolor=black, linewidth=0.018, arrowsize=0.05291667cm 2.0,arrowlength=0.8,arrowinset=0.2]{->}(7.32,-0.8)(7.52,-0.5)
\rput[bl](7.42,-0.45){$m_{\widetilde{H}}$}
\psline[linecolor=black, linewidth=0.018, arrowsize=0.05291667cm 2.0,arrowlength=0.8,arrowinset=0.2]{->}(8.12,0.9)(8.52,0.9)
\psline[linecolor=black, linewidth=0.018, arrowsize=0.05291667cm 2.0,arrowlength=0.8,arrowinset=0.2]{->}(8.12,-0.3)(8.52,-0.3)
\psline[linecolor=black, linewidth=0.018, arrowsize=0.05291667cm 2.0,arrowlength=0.8,arrowinset=0.2]{->}(6.82,0.9)(7.22,0.9)
\psline[linecolor=black, linewidth=0.018, arrowsize=0.05291667cm 2.0,arrowlength=0.8,arrowinset=0.2]{->}(6.82,-0.3)(7.22,-0.3)
\rput[bl](6.32,0.8){$i_H$}
\rput[bl](6.32,-0.4){$i_H$}
\rput[bl](7.78,0.16){$\vdots$}
\rput[bl](6.48,0.16){$\vdots$}
\end{pspicture}

\end{figure}
\noindent by definition of $\iota_r$, where we write $\a\in\Aut(H)$ instead of $\rho(\a^*)$ for simplicity and $i_H:H\to\wt{H}$ is the inclusion. The tensor corresponding to an arc only puts the automorphism associated to the dual of the arc on the crossings through that arc. It follows that contracting all the $\wt{H}$-tensors from $\aaa\cup\bb$ is the same as contracting the $H$-tensors associated to the closed $\a$'s (of Subsection \ref{subs: tensors and HDs}) with the following $\wt{H}$-tensors for each $\b$:
\begin{figure}[H]
\centering
\begin{pspicture}(0,-1.5091925)(3.61,1.5091925)
\psline[linecolor=black, linewidth=0.018, arrowsize=0.05291667cm 2.0,arrowlength=0.8,arrowinset=0.2]{->}(1.1,0.0)(2.0,0.0)
\psline[linecolor=black, linewidth=0.018, arrowsize=0.05291667cm 2.0,arrowlength=0.8,arrowinset=0.2]{->}(1.7,0.70000005)(2.1,0.30000007)
\rput[bl](2.11,-0.16999994){$m_{\widetilde{H}}$}
\psline[linecolor=black, linewidth=0.018, arrowsize=0.05291667cm 2.0,arrowlength=0.8,arrowinset=0.2]{->}(2.8,0.0)(3.2,0.0)
\rput[bl](3.39,-0.12999994){$\widetilde{\boldsymbol{\mu}}$}
\rput[bl](1.3,0.8000001){$i_H$}
\rput[bl](0.98,0.52000004){$\alpha_{i_1}$}
\psline[linecolor=black, linewidth=0.018, arrowsize=0.05291667cm 2.0,arrowlength=0.8,arrowinset=0.2]{->}(1.5,0.50000006)(2.0,0.20000006)
\rput[bl](0.6,-0.09999994){$i_H$}
\rput[bl](0.68,-0.47999993){$\alpha_{i_2}$}
\psline[linecolor=black, linewidth=0.018, arrowsize=0.05291667cm 2.0,arrowlength=0.8,arrowinset=0.2]{->}(1.2,-0.29999995)(2.0,-0.09999994)
\psline[linecolor=black, linewidth=0.018, arrowsize=0.05291667cm 2.0,arrowlength=0.8,arrowinset=0.2]{->}(0.0,0.0)(0.5,0.0)
\psline[linecolor=black, linewidth=0.018, arrowsize=0.05291667cm 2.0,arrowlength=0.8,arrowinset=0.2]{->}(1.6,-0.79999995)(2.1,-0.29999995)
\rput[bl](1.3,-1.0999999){$i_H$}
\rput[bl](1.58,-1.4799999){$\alpha_{i_k}$}
\psline[linecolor=black, linewidth=0.018, arrowsize=0.05291667cm 2.0,arrowlength=0.8,arrowinset=0.2]{->}(1.9,-1.1999999)(2.2,-0.39999995)
\psline[linecolor=black, linewidth=0.018, arrowsize=0.05291667cm 2.0,arrowlength=0.8,arrowinset=0.2]{->}(0.9,-1.5)(1.2,-1.1999999)
\psarc[linecolor=black, linewidth=0.04, linestyle=dotted, dotsep=0.10583334cm, dimen=outer](1.05,-0.6499999){0.4}{180.0}{230.0}
\psline[linecolor=black, linewidth=0.018, arrowsize=0.05291667cm 2.0,arrowlength=0.8,arrowinset=0.2]{->}(0.9,1.5)(1.2,1.2)
\end{pspicture}
\end{figure}
\noindent where we write $\ov{\b}=\a_{i_1}\dots\a_{i_k}$ and, for the moment, we assume that all crossings through $\b$ are positive. Now, using the definition of the multiplication of $\wt{H}$, and that $\wt{\int}\circ i_H=\int$, the last tensor is the same as
\begin{figure}[H]
\centering
\begin{pspicture}(0,-1.3593855)(3.31,1.3593855)
\rput[bl](1.81,-0.020193024){$m_H$}
\psline[linecolor=black, linewidth=0.018, arrowsize=0.05291667cm 2.0,arrowlength=0.8,arrowinset=0.2]{->}(2.5,0.049806975)(2.9,0.049806975)
\rput[bl](3.09,-0.08019302){$\boldsymbol{\mu}$}
\psline[linecolor=black, linewidth=0.018, arrowsize=0.05291667cm 2.0,arrowlength=0.8,arrowinset=0.2]{->}(0.6,1.349807)(1.8,0.24980697)
\rput[bl](0.68,-0.030193023){$\alpha_{i_1}$}
\psline[linecolor=black, linewidth=0.018, arrowsize=0.05291667cm 2.0,arrowlength=0.8,arrowinset=0.2]{->}(1.3,0.049806975)(1.7,0.049806975)
\psline[linecolor=black, linewidth=0.018, arrowsize=0.05291667cm 2.0,arrowlength=0.8,arrowinset=0.2]{->}(0.0,0.049806975)(0.5,0.049806975)
\rput[bl](0.28,-0.930193){$\alpha_{i_1}\alpha_{i_2}\dots\alpha_{i_{k-1}}$}
\psline[linecolor=black, linewidth=0.018, arrowsize=0.05291667cm 2.0,arrowlength=0.8,arrowinset=0.2]{->}(1.4,-0.550193)(1.8,-0.15019302)
\psline[linecolor=black, linewidth=0.018, arrowsize=0.05291667cm 2.0,arrowlength=0.8,arrowinset=0.2]{->}(0.6,-1.350193)(0.9,-1.0501931)
\psarc[linecolor=black, linewidth=0.04, linestyle=dotted, dotsep=0.10583334cm, dimen=outer](1.25,-0.20019302){0.4}{180.0}{230.0}
\end{pspicture}
\end{figure}
\noindent which is the Fox calculus tensor of Subsection \ref{subs: tensors and HDs}. This shows that $\wtZ_{\wt{H}}^{\rho}(\HH)=Z_H^{\rho}(\HH)$ assuming all crossings are positive. If there is any negative crossings through $\b$, it is easy to see that the tensor $S_{\wt{H}}$ contributes the extra $\a_{i_j}^{-1}$ at the tail of $\a_{i_1}\a_{i_2}\dots\a_{i_{j-1}}$ for each negative $x\in\a_{i_j}\cap\b$ (see Notation \ref{def: bx}). This follows from
\begin{align*}
S_{\wt{H}}(h\cdot\a)=\a^{-1}\cdot S_H(h)=\a^{-1}(S_H(h))\cdot \a^{-1}.
\end{align*}
This shows that $\wtZ_{\wt{H}}^{\rho}(\HH)=Z_H^{\rho}(\HH)$ in all cases. That $\wtI_{\wt{H}}^{\rho}=I_H^{\rho}$ follows by the definitions of the refined invariants, since the distinguished group-like of $\wt{H}$ with the given relative cointegral structure is $a^*=\lH:\Aut(H)\to\kk$.

\end{proof}

%We only considered abelian representations, but it could also be done with representations $\pi_1(M)\to G(A)$

\subsection{Hopf $G$-algebras and Fox calculus}\label{subs: Kup for Hopf G-algebra}

The Fox calculus formula of Subsection \ref{subs: tensors and HDs} is also quite transparent when thought in terms of Hopf $G$-algebras. Indeed, we show that Virelizier's extension of Kuperberg's invariant through Hopf $G$-algebras \cite{Virelizier:flat}, restricted to the Hopf $G$-algebra of Example \ref{example: Hopf group-algebra from semidirect product}, leads directly to our Fox calculus formula.

\def\PHH{P_{\HH}}
\medskip

We begin by briefly recalling the (dual) construction of \cite{Virelizier:flat}, using an arbitrary involutory finite type Hopf $G$-algebra $\uHH$. Let $\ucoint=\{\ca\}_{\a\in G}$ be a cointegral of $\uH$ and $\int:H_1\to\kk$ be an integral of $H_1$ such that $\int(\coint_1)=1$. Let $Y$ be a closed oriented 3-manifold endowed with a representation $\rho:\pi_1(Y)\to G$. Let $\HH=(\S,\aa,\bb)$ be an ordered, oriented, $\bb$-based Heegaard diagram of $Y$. For simplicity, we note $\Ha$ instead of $H_{\rho(\a^*)}$ and $\ca$ instead of $\coint_{\rho(\a^*)}$ for each $\a\in\aa$. Then, to each $\a\in\aa$ we associate the tensor 
 \begin{figure}[H]
 \centering
 \begin{pspicture}(0,-0.5572612)(2.0572608,0.5572612)
\psline[linecolor=black, linewidth=0.018, arrowsize=0.05291667cm 2.0,arrowlength=0.8,arrowinset=0.2]{->}(0.5,0.0)(0.9,0.0)
\psline[linecolor=black, linewidth=0.018, arrowsize=0.05291667cm 2.0,arrowlength=0.8,arrowinset=0.2]{->}(1.7,0.2)(2.1,0.6)
\psline[linecolor=black, linewidth=0.018, arrowsize=0.05291667cm 2.0,arrowlength=0.8,arrowinset=0.2]{->}(1.7,-0.2)(2.1,-0.6)
\rput[bl](1.96,-0.24){$\vdots$}
\psline[linecolor=black, linewidth=0.018, arrowsize=0.05291667cm 2.0,arrowlength=0.8,arrowinset=0.2]{->}(1.7,0.1)(2.1,0.3)
\rput[bl](0.0,-0.1){$\coint_{\alpha}$}
\rput[bl](1.1,-0.1){$\Delta_{\alpha}$}
\end{pspicture}
 
 \end{figure}
 \noindent where the are as many outcoming legs as crossings through $\a$ and to each crossing $x$ we associate the tensor
 
 \begin{figure}[H]
 \centering
 \begin{pspicture}(0,-0.16)(1.75,0.16)
\psline[linecolor=black, linewidth=0.018, arrowsize=0.05291667cm 2.0,arrowlength=0.8,arrowinset=0.2]{->}(0.0,0.04)(0.4,0.04)
\rput[bl](0.55,-0.16){$S^{\epsilon_x}_{\alpha}$}
\psline[linecolor=black, linewidth=0.018, arrowsize=0.05291667cm 2.0,arrowlength=0.8,arrowinset=0.2]{->}(1.2,0.04)(1.6,0.04)
\rput[bl](1.8,0.04){.}
\end{pspicture}
 
 \end{figure}
 
\noindent  Now, suppose a curve $\b\in\bb$ has associated the word
$$\ov{\b}=\a_{i_1}^{m_1}\dots \a_{i_k}^{m_k}$$
when starting from its basepoint and following its orientation (as usual, we denote $\a^*\in\pi_1(M,p)$ just by $\a$). Since $\ov{\b}=1$ in $\pi_1(M,p)$, the iterated multiplication $m_{\a_{i_1}^{m_1}, \dots, \a_{i_k}^{m_k}}$ of $\uH$ has target $H_1$, which can then be composed with the integral. Thus, we associate to each $\b$ the tensor 

\begin{figure}[H]
\centering
\begin{pspicture}(0,-0.6091925)(3.5591924,0.6091925)
\psline[linecolor=black, linewidth=0.018, arrowsize=0.05291667cm 2.0,arrowlength=0.8,arrowinset=0.2]{->}(0.009192505,-0.6)(0.4091925,-0.2)
\psline[linecolor=black, linewidth=0.018, arrowsize=0.05291667cm 2.0,arrowlength=0.8,arrowinset=0.2]{->}(0.009192505,0.6)(0.4091925,0.2)
\rput[bl](0.06919251,-0.24){$\vdots$}
\psline[linecolor=black, linewidth=0.018, arrowsize=0.05291667cm 2.0,arrowlength=0.8,arrowinset=0.2]{->}(2.4091926,0.0)(2.8091924,0.0)
\psline[linecolor=black, linewidth=0.018, arrowsize=0.05291667cm 2.0,arrowlength=0.8,arrowinset=0.2]{->}(0.009192505,0.3)(0.4091925,0.1)
\rput[bl](0.5091925,-0.2){$m_{\alpha_{i_1}^{m_1},\dots,\alpha_{i_k}^{m_k}}$}
\rput[bl](3.0091925,-0.1){$\int$}
%\rput[bl](3.5091925,0.0){.}
\end{pspicture}

\end{figure}

\noindent where there are as many incoming legs as crossings through $\b$. These tensors can be contracted as usual, leading to a scalar $Z_{\uH}^{\rho}(\HH)\in\kk$. The main theorem of \cite{Virelizier:flat} is that this is an invariant of $(Y,\rho)$ when $\uH$ is involutory with $\dim(H_1)\neq 0$ in $\kk$, we denote it by $I_{\uH}^{\rho}(Y)$. %Note that under these conditions it follows that $H_1$ is semisimple and cosemisimple.
\medskip

\begin{proposition}
\label{prop: Virelizier at sd product is ours}
Let $\uH$ be the Hopf $\Aut(H)$-algebra associated to the semidirect product $\kk[\Aut(H)]\ltimes H$ where $H$ is a finite dimensional involutory Hopf algebra with $\dim(H)\neq 0$ in $\kk$. Let $Y$ be a closed 3-manifold and let $(M_0,\c_0)$ be the associated sutured manifold, that is, $M_0=Y\sm B^3$ where $B$ is an embedded 3-ball in $Y$ and $\c_0$ is a single suture in $\p M_0$. Then $$I_{\uH}^{\rho}(Y)=I_H^{\rho}(M_0,\c_0)$$ 
for any $\rho:\pi_1(Y)\to\Aut(H)$.
\end{proposition}

\begin{proof}
Let $\HH=\HD$ be a (ordered, oriented, $\bb$-based) Heegaard diagram of $Y$ and let $\HH_0$ be the resulting diagram for $(M_0,\c_0)$ obtained by deleting a small disk to $\S$. For each $\a\in\Aut(H)$, identify $\Ha\cong H$ via the coalgebra isomorphism $h\cdot\a\mapsto h$ (recall $\Ha=\{h\cdot \a \ | \ h\in H\}\sb\kk[\Aut(H)]\ltimes H$). The result follows by writing the above $\uH$-tensors associated to $\HH$ in terms of the structure tensors of $H$ and its automorphisms. Indeed, since our identification is a coalgebra isomorphism, the above $\a$-tensors immediately correspond to those of Subsection \ref{subs: tensors and HDs}. Now, the above tensor associated to a crossing corresponds to
\begin{figure}[H]
\centering
\begin{pspicture}(0,-0.14)(2.8265588,0.14)
\psline[linecolor=black, linewidth=0.018, arrowsize=0.05291667cm 2.0,arrowlength=0.8,arrowinset=0.2]{->}(0.0,-0.04)(0.4,-0.04)
\rput[bl](0.55,-0.14){$S^{\epsilon_x}$}
\psline[linecolor=black, linewidth=0.018, arrowsize=0.05291667cm 2.0,arrowlength=0.8,arrowinset=0.2]{->}(1.2,-0.04)(1.6,-0.04)
\psline[linecolor=black, linewidth=0.018, arrowsize=0.05291667cm 2.0,arrowlength=0.8,arrowinset=0.2]{->}(2.6,-0.04)(3,-0.04)
\rput[bl](1.75,-0.14){$\alpha^{-\e_x}$}
\rput[bl](3.2,-0.04){.}
\end{pspicture}
\end{figure}

\noindent This is not the same as the corresponding tensor of Subsection \ref{subs: tensors and HDs}, but it will be after composition with the $\b$-tensors. Indeed, under the above identification the structure map $m_{\a_1,\a_2}$ becomes

\begin{figure}[H]
\centering
\begin{pspicture}(0,-0.5125166)(2.6538305,0.5125166)
\psline[linecolor=black, linewidth=0.018, arrowsize=0.05291667cm 2.0,arrowlength=0.8,arrowinset=0.2]{->}(0.9038306,-0.19990616)(1.3038306,-0.09990616)
\psline[linecolor=black, linewidth=0.018, arrowsize=0.05291667cm 2.0,arrowlength=0.8,arrowinset=0.2]{->}(0.0038305663,0.5000938)(1.3038306,0.10009384)
\psline[linecolor=black, linewidth=0.018, arrowsize=0.05291667cm 2.0,arrowlength=0.8,arrowinset=0.2]{->}(2.0038307,0.0)(2.4038305,0.0)
\rput[bl](1.5038306,-0.049906157){$m$}
\psline[linecolor=black, linewidth=0.018, arrowsize=0.05291667cm 2.0,arrowlength=0.8,arrowinset=0.2]{->}(0.0038305663,-0.49990615)(0.40383056,-0.39990616)
\rput[bl](0.48383057,-0.40990615){$\alpha_1$}
\rput[bl](2.6038306,-0.1){,}
\end{pspicture}
\end{figure}

\noindent hence, the iterated multiplication $m_{\a_{i_1}^{m_1},\dots, \a_{i_k}^{m_k}}$ is given by 
\begin{figure}[H]
\centering
\begin{pspicture}(0,-1.1113902)(3.1066833,1.1113902)
\psline[linecolor=black, linewidth=0.018, arrowsize=0.05291667cm 2.0,arrowlength=0.8,arrowinset=0.2]{->}(1.3066833,0.09976029)(1.8066833,0.09976029)
\psline[linecolor=black, linewidth=0.018, arrowsize=0.05291667cm 2.0,arrowlength=0.8,arrowinset=0.2]{->}(0.0066833496,1.0997603)(1.8066833,0.19976029)
\psline[linecolor=black, linewidth=0.018, arrowsize=0.05291667cm 2.0,arrowlength=0.8,arrowinset=0.2]{->}(2.5066833,0.09976029)(2.9066834,0.09976029)
\rput[bl](2.0066833,0.049760286){$m$}
\psline[linecolor=black, linewidth=0.018, arrowsize=0.05291667cm 2.0,arrowlength=0.8,arrowinset=0.2]{->}(0.0066833496,0.09976029)(0.50668335,0.09976029)
\rput[bl](0.6066834,-0.050239716){$\alpha_{i_1}^{m_1}$}
\psline[linecolor=black, linewidth=0.018, arrowsize=0.05291667cm 2.0,arrowlength=0.8,arrowinset=0.2]{->}(1.3066833,-0.3002397)(1.8066833,0.0)
\rput[bl](0.30668336,-0.75023973){$\alpha_{i_1}^{m_1}\dots\alpha_{i_{k-1}}^{m_{k-1}}$}
\psline[linecolor=black, linewidth=0.018, arrowsize=0.05291667cm 2.0,arrowlength=0.8,arrowinset=0.2]{->}(0.0066833496,-1.1002398)(0.50668335,-0.80023974)
\rput[bl](3.0566833,0.09976029){.}
\rput[bl](0.06668335,-0.5402397){$\vdots$}
\end{pspicture}
\end{figure}

\noindent Thus, whenever a crossing $x$ is negative, the $\a^{-1}$ coming from the antipode tensor provides the $\a^{-1}$ in the tail of $\ov{\b}_x$ as in the $\b$-tensors of Subsection \ref{subs: tensors and HDs}. This immediately implies that $$Z^{\rho}_{\uH}(\HH)=Z_H^{\rho}(\HH_0)$$
where the left hand side is the above tensor and the right hand side is ours, hence also $I_{\uH}^{\rho}(Y)=I_H^{\rho}(M_0,\c_0)$.

\end{proof}

\begin{remark}
\label{remark: Spinc and o vs non-unimodular}
The conditions on $H$ in the above proposition imply semisimplicity of $H$ and $H^*$ \cite[Theorem 10.4.3]{Radford:BOOK}. However, in Theorem \ref{Theorem: I for sd product is Fox calculus}, $H$ is not necessarily semisimple, for instance if $H$ is a Hopf superalgebra with $H_1\neq 0$ \cite[Corollary 3.1.2]{AEG:Triangular} or if $\char(\kk)=p>0$ and $p\mid\dim(H)$ (also by \cite[Theorem 10.4.3]{Radford:BOOK}). In this case, the Hopf $\Aut(H)$-algebra $\uH$ coming from $\kk[\Aut(H)]\ltimes H$ is non-semisimple, in particular, it may be non-unimodular so the construction of \cite{Virelizier:flat} does not applies. It turns out that there are two sorts of non-unimodularity of $\uH$: on the one hand, the graded-cointegral of such $\uH$ is not two-sided if $\lH\nequiv 1$ (see Example \ref{example: Hopf group-algebra from semidirect product}). On the other hand, even if we assume that the cointegral of the neutral component (which is $H$) is two-sided, $H$ is not unimodular if the cointegral has mod 2 degree one (see Remark \ref{remark: unimodularity}). In our setting, the $\Spinc$ structure takes care of the former failure of unimodularity while the orientation $\o$ takes care of the latter. In particular, $\Spinc$ only appears in the non-unimodular $G$-graded setting, this is why this additional structure is not present in other works on Kuperberg invariants.

%It has to be noted that \cite{Virelizier:flat} assumes semisimplicity of $\uH$, and hence there are no $\Spinc$ structures or homology orientations.
\end{remark}

 %Thus, Theorem \ref{Theorem: I for sd product is Fox calculus} may be restated by saying that Virelizier's extension of Kuperberg's invariant, restricted to a semidirect product, can be extended to balanced sutured 3-manifolds. %Note, however, that Virelizier assumes $H_1$ is semisimple and cosemisimple, while our theorem only needs $\uH$ involutory and the cointegral to be two-sided at $H_1$, but $\uH$ may be non-unimodular.

The reason we do not define sutured manifold invariants from more general (involutory) Hopf $G$-algebras is the following. Suppose we want to extend the construction of \cite{Virelizier:flat} sketched above to an extended Heegaard diagram of a sutured 3-manifold. Then to each arc $\a\in\bolda$ we need to associate a tensor
 \begin{figure}[H]
 \centering
 \begin{pspicture}(0,-0.5572612)(2.0572608,0.5572612)
\psline[linecolor=black, linewidth=0.018, arrowsize=0.05291667cm 2.0,arrowlength=0.8,arrowinset=0.2]{->}(0.5,0.0)(0.9,0.0)
\psline[linecolor=black, linewidth=0.018, arrowsize=0.05291667cm 2.0,arrowlength=0.8,arrowinset=0.2]{->}(1.7,0.2)(2.1,0.6)
\psline[linecolor=black, linewidth=0.018, arrowsize=0.05291667cm 2.0,arrowlength=0.8,arrowinset=0.2]{->}(1.7,-0.2)(2.1,-0.6)
\rput[bl](1.96,-0.24){$\vdots$}
\psline[linecolor=black, linewidth=0.018, arrowsize=0.05291667cm 2.0,arrowlength=0.8,arrowinset=0.2]{->}(1.7,0.1)(2.1,0.3)
\rput[bl](0.0,-0.1){$i_{\alpha}$}
\rput[bl](1.1,-0.1){$\Delta_{\alpha}$}
\end{pspicture}
 
 \end{figure}
 
\def\ia{i_{\a}} \def\iam{i_{\a^{-1}}}

\noindent for some special element $i_{\a}\in \Ha$ (recall that we denote $\Ha$ instead of $H_{\rho(\a^*)}$ for simplicity). By the cointegral property, the scalar obtained by contracting all the tensors associated to curves and arcs is invariant under arc-curve slidings. However, if we want this scalar to be invariant under arc-arc slidings, so as to get an invariant $I_{\uH}^{\rho}(M,\c)$ of $(M,\c)$ together with $\rho:\pi_1(M)\to G$, then it is natural to require $i_{\a}$ to be a group-like element of $\Ha$ and that $m_{\a_1,\a_2}(i_{\a_1}\ot i_{\a_2})=i_{\a_1\a_2}$ for each $\a_1,\a_2\in G$. But if there exists $(i_{\a})_{\a\in G}$ satisfying these conditions, then $G$ acts on $H_1$ by conjugation by $i_{\a}$: if we set $\phi(\a)(h)=m_{\a,1,\a^{-1}}(\ia\ot h\ot \iam)$ for all $\a\in G, h\in H_1$, then $\phi:G\to\Aut(H_1)$ is a group homomorphism. Moreover, the map $\Ha\to (H_1)_{\phi(\a)}, h\mapsto \phi(\a)\cdot (i_{\a^{-1}}h)$ is a Hopf morphism from the original Hopf $G$-algebra to the Hopf $\Aut(H_1)$-algebra built from $\kk[\Aut(H_1)]\ltimes H_1$ and this map preserves the cointegrals. It follows that the scalar invariant obtained from $\uH$ and $(\ia)_{\a\in G}$ can be obtained through the latter semidirect product, that is $I_{\uH}^{\rho}(M,\c)=I_{H_1}^{\phi\circ\rho}(M,\c)$, thus we do not gain anything new.

\subsection{Relative integrals and Hopf $G$-(co)algebras}\label{subs: rel int and Hopf G-alg} \def\wtint{\wt{\int}}\def\wtH{\wt{H}} \def\wtHb{\wtH_{\b}}
 
We now explain the relation between the relative integral setting of \cite{LN:Kup} and Hopf group-(co)algebras. Suppose $\wt{H}$ is equipped with a relative right cointegral $\iota_r:A\to\wt{H}$ and a relative right integral $\wtint:\wtH\to B$, where $A\sb\wtH$ is a cocentral subalgebra and $B\sb\wtH$ is a central subalgebra. Let's assume a while that $A=\kk$. Then it is well-known that the subalgebra $B$ induces a Hopf group-coalgebra with group $G\eq\Hom_{alg}(B,\kk)$ as follows: for each $\b\in G$, let $I_{\b}$ be the two-sided ideal of $\wtH$ generated by elements of the form $b-\b(b)1_{\wtH}$ with $b\in B$ and set $\wtHb\eq \wtH/I_{\b}$, so each $\wtHb$ is an algebra. It is easy to see that the coproduct $\De_{\wtH}$ descends to $\De_{\b_1\b_2}:\wtH_{\b_1\b_2}\to \wtH_{\b_1}\ot \wtH_{\b_2}$ and similarly the antipode descends to $S_{\b}:\wtHb\to\wtH_{\b^{-1}}$. Hence, $\{\wtHb\}_{\b\in G}$ is a Hopf group-coalgebra with the structure maps induced from $\wt{H}$. In these terms, a relative integral $\wtint:\wtH\to B$ as in \cite{LN:Kup} induces a Hopf group-coalgebra integral $\{\int_{\b}\}_{\b\in G}$ as in \cite{Virelizier:Hopfgroup}. Indeed, it is easy to see that for each $\b\in\Hom_{alg}(B,\kk)$ the map $\b\circ \wtint:\wt{H}\to\kk$ descends to a map $\int_{\b}:\wtHb\to\kk$. In other words, whenever $A=\kk$, the relative integral approach of \cite{LN:Kup} is an explicit form of the Hopf group-coalgebra approach of \cite{Virelizier:flat}, extended to sutured manifolds. Dually, the cocentral subalgebra $A\sb \wtH$ induces a Hopf $G(A)$-algebra structure and the relative cointegral is equivalent to a Hopf group-algebra cointegral (this generalizes Example \ref{example: Hopf group-algebra from semidirect product}). Whenever both $A,B$ are non-trivial, we are naturally led to an algebra-graded and coalgebra-cograded object, which is not a particular case of \cite{Virelizier:flat}, but we do not have interesting concrete examples.

\appendix

\section{The non-abelian case}
\label{subs: non-abelian case}
 In \cite{LN:Kup} we supposed $\rho$ is an abelian representation, though this is not strictly necessary. Everything works for non-abelian $\rho$, but we have to take care of the basepoint $p$ when we isotope the arcs of an extended Heegaard diagram. We now work out the necessary extra details. 
\medskip
 
Let $(M,\c)$ be a balanced sutured 3-manifold with connected $R_-(\c)$ and let $\HH=\HDD$ be an oriented, extended Heegaard diagram of it. Let $\aaa=\aa\cup\bolda$ where $\aa=\{\a_1,\dots,\a_d\}$ are the closed curves and $\bolda=\{\a_{d+1},\dots,\a_{d+l}\}$ are the arcs. Then the dual curves $\a^*\in\pi_1(M,p)$ are invariant under isotopy of $\aaa$ except for an overall conjugation if an arc $\a\in\bolda$ is isotoped along $\p\S$ past the basepoint $p$. More precisely, suppose the arc $\a$ is oriented so that $\a\cdot\d=+1$ where $\d$ is the oriented boundary component of $\p\S$ containing $p$. Suppose further that the arc $\a$ is isotoped past $p$ in the opposite direction of $\d$, and denote by $\a'$ the new arc and by $\HH'$ the oriented extended Heegaard diagram obtained by replacing $\a$ with $\a'$. If we denote by $\a'_i$ the curves $\a_i$ in $\HH'$, then $$(\a'_i)^*=\a^*\a^*_i(\a^*)^{-1}$$ in $\pi_1(M,p)$ for all $i=1,\dots,d+l$, see Figure \ref{fig: dual curves}.

\begin{figure}[h]
\centering
\begin{pspicture}(0,-2.3)(11.909935,1.4150095)
\definecolor{colour0}{rgb}{0.0,0.6,1.0}
\psline[linecolor=black, linewidth=0.04, arrowsize=0.05291667cm 2.0,arrowlength=1.4,arrowinset=0.0]{<-}(0.169935,1.0050094)(4.569935,1.0050094)
\psline[linecolor=red, linewidth=0.04](0.569935,1.0050094)(0.569935,0.20500946)
\psline[linecolor=red, linewidth=0.04](2.569935,1.0050094)(2.569935,0.20500946)
\psline[linecolor=red, linewidth=0.04, linestyle=dotted, dotsep=0.10583334cm, arrowsize=0.05291667cm 2.0,arrowlength=1.4,arrowinset=0.0]{<-}(0.569935,0.20500946)(0.569935,-0.59499055)
\psline[linecolor=red, linewidth=0.04, linestyle=dotted, dotsep=0.10583334cm, arrowsize=0.05291667cm 2.0,arrowlength=1.4,arrowinset=0.0]{<-}(2.569935,0.20500946)(2.569935,-0.59499055)
\psdots[linecolor=black, dotsize=0.014](2.969935,1.0050094)
\psdots[linecolor=black, dotsize=0.014](2.969935,1.0050094)
\psdots[linecolor=black, dotsize=0.014](2.969935,1.0050094)
\psdots[linecolor=black, dotsize=0.014](2.969935,1.0050094)
\psline[linecolor=black, linewidth=0.04, arrowsize=0.05291667cm 2.0,arrowlength=1.4,arrowinset=0.0]{<-}(6.969935,1.0050094)(11.369935,1.0050094)
\psdots[linecolor=black, dotsize=0.014](9.769935,1.0050094)
\psdots[linecolor=black, dotsize=0.014](9.769935,1.0050094)
\psdots[linecolor=black, dotsize=0.014](9.769935,1.0050094)
\psdots[linecolor=black, dotsize=0.014](9.769935,1.0050094)
\psbezier[linecolor=red, linewidth=0.04](9.369935,0.20500946)(9.369935,1.0050094)(10.969935,0.20500946)(10.969935,1.0050094604492188)
\psline[linecolor=red, linewidth=0.04, linestyle=dotted, dotsep=0.10583334cm, arrowsize=0.05291667cm 2.0,arrowlength=1.4,arrowinset=0.0]{<-}(9.369935,0.20500946)(9.369935,-0.59499055)
\psline[linecolor=blue, linewidth=0.04](3.569935,1.0050094)(3.569935,0.20500946)
\psline[linecolor=blue, linewidth=0.04, arrowsize=0.05291667cm 2.0,arrowlength=1.4,arrowinset=0.0]{->}(0.769935,0.40500945)(0.369935,0.40500945)
\psbezier[linecolor=blue, linewidth=0.04, linestyle=dotted, dotsep=0.10583334cm](0.369935,0.40500945)(-0.030065002,0.40500945)(-0.230065,-0.79499054)(0.569935,-0.7949905395507812)(1.369935,-0.79499054)(1.169935,0.40500945)(0.769935,0.40500945)
\psbezier[linecolor=blue, linewidth=0.04, linestyle=dotted, dotsep=0.10583334cm](3.569935,-0.79499054)(3.569935,-1.5949905)(0.569935,-1.5949905)(0.569935,-0.7949905395507812)
\psline[linecolor=blue, linewidth=0.04, linestyle=dotted, dotsep=0.10583334cm](10.369935,0.0050094603)(10.369935,-0.79499054)
\psline[linecolor=blue, linewidth=0.04](10.169935,0.20500946)(10.169935,0.0050094603)
\psdots[linecolor=black, dotsize=0.2](3.569935,1.0050094)
\psline[linecolor=blue, linewidth=0.04, linestyle=dotted, dotsep=0.10583334cm](3.569935,-0.79499054)(3.569935,0.20500946)
\psline[linecolor=red, linewidth=0.04](7.369935,1.0050094)(7.369935,0.20500946)
\psline[linecolor=red, linewidth=0.04, linestyle=dotted, dotsep=0.10583334cm, arrowsize=0.05291667cm 2.0,arrowlength=1.4,arrowinset=0.0]{<-}(7.369935,0.20500946)(7.369935,-0.59499055)
\psline[linecolor=blue, linewidth=0.04, arrowsize=0.05291667cm 2.0,arrowlength=1.4,arrowinset=0.0]{->}(7.569935,0.40500945)(7.169935,0.40500945)
\psbezier[linecolor=blue, linewidth=0.04, linestyle=dotted, dotsep=0.10583334cm](7.169935,0.40500945)(6.769935,0.40500945)(6.569935,-0.79499054)(7.369935,-0.7949905395507812)(8.169935,-0.79499054)(7.969935,0.40500945)(7.569935,0.40500945)
\psline[linecolor=colour0, linewidth=0.04, arrowsize=0.05291667cm 2.0,arrowlength=1.4,arrowinset=0.0]{->}(9.569935,0.40500945)(9.169935,0.40500945)
\psbezier[linecolor=colour0, linewidth=0.04, linestyle=dotted, dotsep=0.10583334cm](9.169935,0.40500945)(8.769935,0.40500945)(8.569935,-0.79499054)(9.369935,-0.7949905395507812)(10.169935,-0.79499054)(9.969935,0.40500945)(9.569935,0.40500945)
\psbezier[linecolor=blue, linewidth=0.04, linestyle=dotted, dotsep=0.10583334cm](10.369935,-0.79499054)(10.369935,-1.5949905)(7.369935,-1.5949905)(7.369935,-0.7949905395507812)
\psline[linecolor=colour0, linewidth=0.04, arrowsize=0.05291667cm 2.0,arrowlength=1.4,arrowinset=0.0]{->}(2.769935,0.40500945)(2.369935,0.40500945)
\psbezier[linecolor=colour0, linewidth=0.04, linestyle=dotted, dotsep=0.10583334cm](2.369935,0.40500945)(1.9699349,0.40500945)(1.769935,-0.79499054)(2.569935,-0.7949905395507812)(3.369935,-0.79499054)(3.169935,0.40500945)(2.769935,0.40500945)
\psbezier[linecolor=blue, linewidth=0.04, linestyle=dotted, dotsep=0.10583334cm](8.769935,0.40500945)(8.369935,-0.19499055)(8.569935,-0.9949905)(9.369935,-0.9949905395507812)(10.169935,-0.9949905)(10.169935,-0.19499055)(10.169935,0.0050094603)
\psbezier[linecolor=blue, linewidth=0.04](10.369935,1.0050094)(10.369935,0.60500944)(9.169935,1.0050094)(8.769935,0.40500946044921876)
\psbezier[linecolor=blue, linewidth=0.04](10.169935,0.20500946)(10.169935,0.40500945)(10.369935,0.40500945)(10.369935,0.20500946044921875)
\psline[linecolor=blue, linewidth=0.04](10.369935,0.20500946)(10.369935,0.0050094603)
\psdots[linecolor=black, dotsize=0.2](10.369935,1.0050094)
\rput[bl](2.569935,1.2050095){$\a$}
\rput[bl](10.969935,1.2050095){$\a'$}
\rput[bl](0.569935,1.2050095){$\a_i$}
\rput[bl](7.369935,1.2050095){$\a_i'$}
\rput[bl](3.569935,1.2050095){$p$}
\rput[bl](10.369935,1.2050095){$p$}
\rput[bl](4.969935,1.0050094){$\delta$}
\rput[bl](11.769935,1.0050094){$\delta$}
\end{pspicture}
\caption{An arc $\a$ is slided past the basepoint $p\in\d\sb \p\S$. On the left we draw $\a^*_i$ as $c A_ic^{-1}$, where $A_i$ is a circle in $\inte(\S)$ intersecting $\aaa$ once at $\a_i$ and $c$ is an arc disjoint from $\aaa$ from the basepoint to a point in $A_i$. On the right we see that the arc $c$ has to be slided over $\a^*$ to avoid an intersection with $\a$. This shows that $(\a'_i)^*=\a^* a^*_i(\a^*)^{-1}$.}\label{fig: dual curves}

\end{figure}

\medskip

Thus, to guarantee that $I_H^{\rho}$ is a topological invariant for non-abelian $\rho$ we need to show that its defining formula is invariant under conjugation of $\rho$ by an element of $\Aut(H)$. This is done in the next lemma.

\begin{lemma}
\label{prop: invariance under conjugation of rho}
Given $\phi\in\Aut(H)$, let $\rhophi:\pi_1(M,p)\to\Aut(H)$ be the homomorphism defined by $\rhophi(x)\eq\phi\circ\rho(x)\circ\phi^{-1}$ for any $x\in\pi_1(M,p)$. Then $Z^{\rhophi}_H(\HH)=Z^{\rho}_H(\HH)$. Hence, $I_H^{\rho}$ is a topological invariant for non-abelian $\rho$ and depends on $\rho$ only up to conjugation.
\end{lemma}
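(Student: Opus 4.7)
The plan is to use that any $\phi\in\Aut(H)$ is a Hopf automorphism, so it commutes with all structure tensors of $H$ appearing in the formula for $K^{\rho}_H(\HH)$, and then absorb the resulting overall factors of $r_H(\phi)$ using Proposition~\ref{prop: properties of two-sided coint int}.

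First, since each factor $\rho_{\phi}(\ovbx)=\phi\circ\rho(\ovbx)\circ\phi^{-1}$ involves the same $\phi$, the tensor $\bigotimes_{x\in\Iaa}\rho_{\phi}(\ovbx)$ factors as
\begin{align*}
\bigotimes_{x\in\Iaa}\rho_{\phi}(\ovbx)=\phi^{\ot N}\circ\Big(\bigotimes_{x\in\Iaa}\rho(\ovbx)\Big)\circ(\phi^{-1})^{\ot N},
\end{align*}
where $N$ is the total number of crossings of $\HH$.

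Next I would push the $(\phi^{-1})^{\ot N}$ leftward past $\Saa\Deaa$ down to the input $c^{\ot d}$, and the $\phi^{\ot N}$ rightward past $P_{\HH}$ and $\mbb$ into the output. Because $\phi^{-1}$ commutes with $S$ (antipode naturality) and because $\phi^{-1}$ is a coalgebra morphism (so $(\phi^{-1})^{\ot k}\De^k=\De^k\phi^{-1}$), one obtains
\begin{align*}
(\phi^{-1})^{\ot N}\,\Saa\,\Deaa = \Saa\,\Deaa\,(\phi^{-1})^{\ot d}.
\end{align*}
Applied to $c^{\ot d}$, this gives $\Saa\Deaa\bigl(r_H(\phi)^{-d}\,c^{\ot d}\bigr)$, using $\phi^{-1}(c)=r_H(\phi^{-1})c=r_H(\phi)^{-1}c$. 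Similarly, $\phi^{\ot N}$ commutes with the permutation $P_{\HH}$ (since the same $\phi$ acts on each factor) and satisfies $\mbb\,\phi^{\ot N}=\phi^{\ot d}\,\mbb$ because $\phi$ is an algebra morphism. Hence
\begin{align*}
K^{\rho_{\phi}}_H(\HH)(c^{\ot d})=r_H(\phi)^{-d}\,\phi^{\ot d}\bigl(K^{\rho}_H(\HH)(c^{\ot d})\bigr).
\end{align*}

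Finally, applying $\mu^{\ot d}$ and using $\mu\circ\phi=r_H(\phi)\mu$ from Proposition~\ref{prop: properties of two-sided coint int}(3) introduces a compensating factor $r_H(\phi)^{d}$, so
\begin{align*}
Z^{\rho_{\phi}}_H(\HH)=r_H(\phi)^{-d}\,r_H(\phi)^{d}\,Z^{\rho}_H(\HH)=Z^{\rho}_H(\HH).
\end{align*}
The main point to verify carefully is the compatibility of $\phi$ with every ingredient of $K^{\rho}_H(\HH)$ (product, coproduct, antipode, and permutation of tensor factors); once this is done, the cancellation of the $r_H(\phi)^{\pm d}$ factors is automatic, and no subtlety arises from the superalgebra structure since $\phi$ is of degree zero.
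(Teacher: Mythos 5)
Your proof is correct and follows essentially the same route as the paper: the paper states the identity $K^{\rho_{\phi}}(\HH)=\phi^{\ot d}\circ K^{\rho}(\HH)\circ(\phi^{-1})^{\ot d}$ in one line (justified by $\phi$ being a Hopf automorphism) and then cancels the two factors $\lH(\phi)^{\pm d}$ exactly as you do. Your version merely spells out the intermediate commutations of $\phi^{\ot N}$ with $\Saa$, $\Deaa$, $P_{\HH}$ and $\mbb$ that the paper leaves implicit.
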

\begin{proof}
\def\Tg{T_g}\def\caa{c_{\aa}}\def\iaa{i_{\aa}}
Since $\phi$ is a Hopf automorphism, we have
\begin{align*}
K_H^{\rhophi}(\HH)=\phi^{\ot d}\circ K_H^{\rho}(\HH)\circ (\phi^{-1})^{\ot d}.
\end{align*}
Now, since $\phi(\coint)=\lH(\phi)\coint$ and $\int\circ\phi=\lH(\phi)\int$ we get
\begin{align*}
Z^{\rhophi}_H(\HH)=\lH(\phi)^d\lH(\phi^{-1})^dZ^{\rho}_H(\HH)=Z^{\rho}_H(\HH).
\end{align*}
The second assertion follows from this and from the fact that $\lH\circ\rho=\lH\circ\rhophi$, which holds since the target of this map is an abelian group.
\end{proof}
\medskip

This lemma also implies that $I_H^{\rho}$ is independent of the basepoint $p$ chosen. More precisely, let $p_1,p_2\in s(\c)$ be basepoints and let $\d:[0,1]\to M$ be a path from $p_1$ to $p_2$. Denote by $C_{[\d]}$ the isomorphism $\pi_1(M,p_1)\to \pi_1(M,p_2)$ defined by $C_{[\d]}(\a)=[\ov{\d}]\a [\d]$ for $\a\in\pi_1(M,p_1)$.
\medskip

\begin{corollary}
Let $p_1,p_2\in s(\c)$ be two basepoints and let $\rho_i:\pi_1(M,p_i)\to \Aut(H)$ for $i=1,2$ be group homomorphisms related by $\rho_1=\rho_2\circ C_{[\d]}$ for some path $\d$ from $p_1$ to $p_2$. Then $I^{\rho_1}_H(M,\c,\ss,\o)=I^{\rho_2}_H(M,\c,\ss,\o)$.
\end{corollary}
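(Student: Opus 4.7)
The plan is to reduce to Lemma \ref{prop: invariance under conjugation of rho}, exploiting that the auxiliary arcs in $B_{\aaa}$ used to define the elements $\ov{\b}_x$ lie in the simply-connected 3-ball $B_{\aaa}$ (here we use the hypothesis that $R_-(\c)$ is connected). Since both $p_1,p_2\in s(\c)$ lie on $\p B_{\aaa}$, one may pick a path $\d_0\sb B_{\aaa}$ joining them, which is unique up to homotopy rel endpoints.

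First, reduce to the case $\d=\d_0$. For an arbitrary path $\d$ from $p_1$ to $p_2$, a direct computation gives
\begin{align*}
C_{[\d]}=\text{Inn}([\ov{\d}\cdot\d_0])\circ C_{[\d_0]}
\end{align*}
as isomorphisms $\pi_1(M,p_1)\to\pi_1(M,p_2)$, where $\text{Inn}$ denotes inner automorphism of $\pi_1(M,p_2)$. Setting $\phi=\rho_2([\ov{\d}\cdot\d_0])\in\Aut(H)$, the hypothesis $\rho_1=\rho_2\circ C_{[\d]}$ becomes $\rho_1=(\rho_2\circ C_{[\d_0]})_\phi$ in the notation of Lemma \ref{prop: invariance under conjugation of rho}, so that lemma gives $Z_H^{\rho_1}(\HH)=Z_H^{\rho_2\circ C_{[\d_0]}}(\HH)$. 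Hence one may replace $\d$ by $\d_0$ in the hypothesis.

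Now, with $\d_0\sb B_{\aaa}$, denote by $\ov{\b}_x^{(i)}\in\pi_1(M,p_i)$ the element of Definition \ref{notation: ovbx} computed using basepoint $p_i$. Since the arcs from the basepoint to the endpoints of the relevant sub-arc of $\b$ can be chosen inside $B_{\aaa}$, and any two such choices are homotopic by simple-connectedness of $B_{\aaa}$, one verifies directly that $\ov{\b}_x^{(2)}=C_{[\d_0]}(\ov{\b}_x^{(1)})$ for each crossing $x$. Consequently
\begin{align*}
\rho_1(\ov{\b}_x^{(1)})=\rho_2(C_{[\d_0]}(\ov{\b}_x^{(1)}))=\rho_2(\ov{\b}_x^{(2)}),
\end{align*}
so the tensors $K^{\rho_1}_H(\HH)$ (with basepoint $p_1$) and $K^{\rho_2}_H(\HH)$ (with basepoint $p_2$) coincide, giving $Z^{\rho_1}_H(\HH)=Z^{\rho_2}_H(\HH)$.

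The only mildly delicate point is verifying the identity $\ov{\b}_x^{(2)}=C_{[\d_0]}(\ov{\b}_x^{(1)})$; this is routine from the construction in Subsection \ref{subs: dual curves} but must be traced through carefully using the simple-connectedness of $B_{\aaa}$.
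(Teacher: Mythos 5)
Your proof is correct and follows essentially the same two-step strategy as the paper: first reduce to a single well-chosen path using the fact that changing $\d$ alters $C_{[\d]}$ only by an inner automorphism together with Lemma \ref{prop: invariance under conjugation of rho}, and then check that the defining data of the tensor transforms by $C_{[\d_0]}$. The paper verifies this last point on the generators $\a^*_{p_i}$ using a path in $\S\sm\aa$ rather than on the words $\ov{\b}_x$ via a path in $B_{\aaa}$, but this is only a cosmetic difference.
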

\begin{proof}
It suffices to show that $Z^{\rho_1}_H(\HH)=Z^{\rho_2}_H(\HH)$. Note that changing the path $\d$ by another path changes $C_{[\d]}$ by an inner automorphism of $\pi_1(M,p_2)$. Therefore, by Proposition \ref{prop: invariance under conjugation of rho}, it suffices to prove the corollary for a specific path $\d$. Let $\a^*_{p_i}\in \pi_1(M,p_i)$ be the dual curves of the $\a$'s coming from the diagram $\HH$ with basepoint $p_i$, $i=1,2$. If we just let $\d$ be a path from $p_1$ to $p_2$ contained in $\S\sm \aaa$, which is connected since we assume $R_-(\c)$ connected, then $\a_{p_2}^*=[\ov{\d}]\a^*_{p_1}[\d]$.   Using $\rho_1=\rho_2\circ C_{[\d]}$ we get $\rho_2(\a_{p_2}^*)=\rho_2(\ov{\d}\a^*_{p_1}\d)=\rho_1(\a_{p_1}^*)\in\Aut(H)$. It follows that $Z^{\rho_2}(\HH)=Z^{\rho_1}(\HH)$.
\end{proof}

When $R_-(\c)$ is disconnected, the above assertions also hold by our definition of $I_H^{\rho}$ in this case.

\bibliographystyle{amsplain}
\bibliography{/Users/daniel/Desktop/TEX/bib/referencesabr}

\end{document}